\renewcommand{\l@section}{%
  \addvspace{0.6em}%
  \@tocline{1}{0pt}{1.5pc}{}{\bfseries}%
}
\renewcommand{\l@subsection}{%
  \@tocline{2}{0pt}{3.5pc}{}{\small}%
}
\newcommand{\scr}{\mathscr}
\newcommand{\F}{\mathbb{F}}
\newcommand{\bb}[1]{\mathbb{#1}}
\newcommand{\fr}[1]{\mathfrak{#1}}
\newcommand{\ol}[1]{\overline{#1}}
\newcommand{\ca}[1]{\mathcal{#1}}
\newcommand{\ra}{\rightarrow}
\newcommand{\plot}[3]{ 
    \foreach \i in {1,...,10} {
        \pgfmathsetmacro\yval{#1[\i-1]}
        \ifnum\i<10
            \pgfmathsetmacro\nextyval{#1[\i]}
        \else
            \pgfmathsetmacro\nextyval{\yval}
        \fi
        \draw[{#3}, color=#2] (\i-0.5,\yval) -- (\i+0.5,\yval);
        \draw[{#3}, color=#2] (\i+0.5,\yval) -- (\i+0.5,\nextyval);
    }
}
\newtheorem{thm}{Theorem}[section]
\newtheorem{lemma}[thm]{Lemma}
\newtheorem{prop}[thm]{Proposition}
\newtheorem{cor}[thm]{Corollary}
\theoremstyle{definition}
\newtheorem{defi}{Definition}
\newtheorem{ex}[thm]{Example}
\theoremstyle{remark}
\newtheorem{rmk}[thm]{Remark}
\crefname{thm}{Theorem}{Theorems}
\crefname{lemma}{Lemma}{Lemmas}
\crefname{cor}{Corollary}{Corollaries}
\crefname{prop}{Proposition}{Propositions}
\crefname{defi}{Definition}{Definitions}
\crefname{ex}{Example}{Examples}
\crefname{rmk}{Remark}{Remarks}
\crefname{section}{Section}{Sections}
\crefname{equation}{\unskip}{\unskip}
\crefname{enumi}{\unskip}{\unskip}
\crefname{subsection}{Subsection}{Subsections}
\newcommand\p{\mathsf{p}}
\newcommand\q{\mathsf{q}}
\newcommand\bigp{\mathsf{P}}
\newcommand\bigq{\mathsf{Q}}
\newcommand{\aut}[1]{\mathsf{Aut}\mkern-2mu \left(#1\right)}
\newcommand{\cent}[1]{\mathsf{Z}\mkern-2mu \left(#1\right)}
\newcommand\End{\mathsf {End}}
\newcommand\Hom{\mathsf{Hom}}
\newcommand\rank{\mathsf {rank}}
\newcommand{\degg}{\operatorname{\mathsf{deg}}}
\newcommand{\Irr}{\mathsf{Irr}}
\newcommand{\Max}{\mathsf{Max}}
\newcommand{\orb}{\mathsf{Orb}}
\newcommand\chara{\mathsf {char}}
\newcommand\id{\mathsf{id}}
\newcommand\soc{\mathsf{Soc}}
\newcommand{\eval}[2]{\left. #1 \right|_{#2}}
\newcommand{\pb}[1]{\left\{ #1\right\}}
\newcommand{\seq}[1]{\left( #1\right)}
\newcommand{\olfz}[1][f]{\overline{#1}\hspace{0em}^{\raisebox{0.9ex}{\tiny$z$}}}
\newcommand{\lleq}{\preccurlyeq}
\newcommand{\nback}[1][-.95pt]{%
  \mathrel{\raisebox{#1}{$\rotatebox[origin=c]{-315}{\scaleobj{0.55}{-}}$}}}
\newcommand{\llneq}{%
\mathrel{\ooalign{$\preccurlyeq$\cr\kern1.2pt$\nback$}}}
\newcounter{marg}[section]
\newcommand\rsout{\bgroup\markoverwith{\textcolor{red}{\rule[0.5ex]{2pt}{0.4pt}}}\ULon}
\begin{document}

\title{Lifting free modules to generalized Weyl algebras}
\author{Samuel A.\ Lopes}
\address{CMUP, Departamento de Matem\'atica, Faculdade de Ci\^encias, Universidade do Porto, Rua do Campo Alegre s/n, 4169--007 Porto, Portugal}
\email{slopes@fc.up.pt}
	
\author{Jonathan Nilsson}
\address{Department of Mathematics, Link\"oping University, Link\"oping, Sweden}
\email{jonathan.nilsson@liu.se}
\subjclass[2020]{Primary: 16D60, 16S36, ; secondary: 17B10, 17B35, 16T20.}
\keywords{Generalized Weyl algebra (GWA); Free module; Irreducible representation; Lie algebra; Weight module; Whittaker module; Cartan-free module.}



\begin{abstract}
\noindent We study modules over a generalized Weyl algebra $R(\sigma,a)$ which are free when restricted to the base ring $R$. When $R$ is an integral domain, we construct all such finite-rank modules up to isomorphism, leading to new simple modules over a variety of algebras. In particular, we show that free modules that have rank $1$ over $R$ can be parametrized as $V_\p$ where $\p$ is a divisor of $a$. We give simplicity criteria for $V_\p$ and, additionally, when $R$ is a PID, provide a complete combinatorial description of the submodule structure of $V_\p$ and of the weight modules occurring as subquotients. We also show that, under some mild conditions on $R(\sigma,a)$, there exist simple $R$-free modules of arbitrary finite rank. We apply our results to $\fr{sl}_2$ in order to construct new families of simple Cartan-free modules of all finite ranks. 
\end{abstract}

\maketitle

\tableofcontents

\section*{Introduction}
Lie algebras and their representations play a central role in modern mathematics and mathematical physics. From the study of symmetries in quantum mechanics to the structure theory of algebraic groups, advances in representation theory have driven major developments for more than a century.

Representations of various generalizations of Lie algebras---such as deformations and quotients of enveloping algebras and quantum analogues---have also been extensively studied, often revealing structural patterns reminiscent of those found in the classical Lie setting. A notable feature of many of these algebras is that they admit concrete realizations as generalized Weyl algebras (GWAs), allowing seemingly different representation-theoretic phenomena to be approached within a single unified framework.

Generalized Weyl algebras were introduced by Bavula in his PhD thesis (see e.g.\ \cite{vB91, vB92, B93, B92, B96}) and have since been comprehensively studied. Prototypical examples of GWAs are the classical Weyl algebras (the rings of differential operators with polynomial coefficients), the enveloping algebras of the Heisenberg Lie algebra and of several low-dimensional Lie algebras, including the simple Lie algebra $\fr{sl}_2$, their primitive quotients, and many quantizations. Further examples include (generalized) noetherian down-up algebras (\cite{BR98, gB99, CS04}), noncommutative deformations of type $A$ Kleinian singularities and Smith algebras (see~\cite{spS90, tH93}), quantum Weyl algebras, and certain multiparameter quantum groups (e.g.\ \cite{QDX00, WJY02}).

Given the connections with Lie theory and quantum groups, the study of representations of GWAs has been a mainstay in the field. In particular, a theory of weight modules for GWAs has been well developed (see \cite{vB92, DGO96, BBF04}) and generalized in several directions (see e.g. \cite{LMZ15, FRS20, GRW23}). Beyond weight modules, another extensively studied class of representations is that of Whittaker modules. These were first studied for the Lie algebra $\fr{sl}_2$ in~\cite{AP74} and later systematized for complex semisimple Lie algebras in \cite{K78}. In \cite{BO09}, Benkart and Ondrus set up the framework for studying Whittaker modules for GWAs; see also \cite{CW23} for other specific examples in the class of GWAs and \cite{XZ23} for a clever adaptation to the context of quantum groups.

Recent attention has been given to a category of modules that are in a sense \textit{opposite} to the category of weight modules in Lie theory, motivated by Block's classification \cite{rB81} of simple modules for $\fr{sl}_2$. These are modules on which the enveloping algebra of a Cartan subalgebra of a semisimple Lie algebra acts freely, rather than with torsion, as is the case for weight modules. Such \emph{Cartan-free} modules were first defined by the second-named author in~\cite{jN15}, where a classification of modules of rank $1$ was also given for simple Lie algebras of type $A$. The classification was extended to type $C$ Lie algebras in~\cite{jN16}, where it was also shown that such modules do not exist for Lie algebras not of type $A$ or $C$, completing the classification of Cartan-free modules for finite-dimensional simple complex Lie algebras. Some of these modules were also independently studied in~\cite{TZ15,TZ18} in connection with Witt algebras. 

Since then, modules analogous to Cartan-free modules of rank $1$ have been studied and partially classified for a multitude of algebras, including algebras related to the Virasoro algebra~\cite{CC15, CG16}, quantum groups~\cite{XFH22}, Lie superalgebras~\cite{CZ15}, Kac-Moody algebras~\cite{CTZ20} and Smith algebras~\cite{FLM24}; we note that the techniques developed in the latter paper are closely related to those used in the present work. 

The structure of Cartan-free modules of higher rank is significantly more intricate. In~\cite{MP17}, the authors used an explicit construction to show that there exist simple Cartan-free modules of arbitrary finite rank for the Lie algebra $\fr{sl}_2$. In~\cite{GN22} this was extended by the construction of a large family of Cartan-free modules of finite rank for $\fr{sl}_n$, called tensor modules, and conditions for their simplicity were determined.

An interesting recent development is the work by Mendonça (see~\cite{M25}), where a broader class of modules was studied, namely modules which are finitely generated rather than free over $U(\fr{h})$. A key result in that paper is that any simple module that is finitely generated over $U(\fr{h})$ is either finite dimensional or Cartan-free, yielding a dichotomy between weight modules and Cartan-free modules in this setting.

We note that many of the constructions and classification results cited above for Cartan-free modules and their generalizations can be phrased in terms of representations of GWAs, and in this setting, they share a common feature: they are free over the underlying base ring $R$. This observation suggests that $R$-free modules provide a framework for understanding diverse parts of the representation theory of GWAs and the many algebras they are related to. 

In this paper we develop this perspective systematically, by constructing all $R$-free modules of finite rank for arbitrary GWAs over an integral domain $R$. It is worth noting that for a GWA, the universal Whittaker modules defined in \cite{BO09} are particular cases of the $R$-free modules which we will define herein. Our approach reveals that many previously studied modules arise naturally as special cases of our construction, and it leads to new families of simple modules simultaneously for a wide range of algebras. Moreover, for modules which are free of rank $1$ over $R$, we are able to provide simplicity criteria and completely describe composition series. In the higher rank case, we give an explicit construction of a new family of simple $R$-free modules of arbitrary finite rank. Since many algebras can be realized as GWAs, this yields several new families of simple modules over a variety of algebras.

\subsection*{Structure of the paper}
In~\cref{S:GWA} we define and collect some facts about the generalized Weyl algebra $A=R(\sigma,a)$ where $R$ is an arbitrary (noncommutative) domain. We construct and classify $R$-free modules of rank $1$ up to isomorphism, and show that such modules can be parametrized as $V_\p$ where $\p$ runs through the divisors of $a$.

In~\cref{S:UFD} we restrict ourselves to the case where $R$ is a UFD. We investigate how the $\sigma$-orbits of irreducible elements of $R$ are related to the simplicity of $V_\p$. In particular, we give a complete description of $R$-cyclic submodules of $V_\p$.

Our sharpest results are reached in~\cref{S:PID}, when $R$ is assumed to be a PID. Here we determine simplicity criteria for $V_\p$ and conditions for $V_\p$ to have finite length. When this is the case, we give a bound for the length of $V_\p$ depending only on the GWA, and we give a combinatorial algorithm to produce composition series for arbitrary $V_\p$. We show that the subquotients appearing in such composition series are weight modules, and conversely, under mild assumptions, that any abelian category containing all $R$-free modules of rank $1$ also necessarily contains all finitely supported weight modules.

\cref{S:EX} contains applications of this theory to a number of particular examples of GWAs, relating these results to some studied elsewhere.  Finally, in~\cref{SEC:highrank} we focus on $R$-free modules of arbitrary finite rank over a PID. We construct all such modules up to isomorphism and we show that (under a natural assumption) there exist simple $R$-free modules of arbitrary finite rank. We finish with the construction of novel simple Cartan-free $\fr{sl}_2$-modules of any given finite rank.

\subsection*{Conventions and notation}

Throughout the paper, $\F$ will always denote an arbitrary base field, $\bb{Z}$ is the ring of integers, $\bb{Z}_{\geq0}$ (respectively, $\bb{Z}_{>0}$) is the set of nonnegative (respectively, positive) integers. The identity map on a set $E$ is denoted by $\id_E$.

All rings, modules and homomorphisms considered will be assumed to be unital; in dealing with associative algebras, we further assume that modules and homomorphisms are unital and linear with respect to the base field. Given a ring $R$, $\cent{R}$ denotes its center and $R^\times$ its group of units.

A portion of our analysis involves (finite) multisets. When dealing with these, our convention is that unions, intersections, set differences, inclusions, products and evaluation of functions should be counted with multiplicities: $\{1,1,2\} \cup\{1,3\}=\{1,1,1,2,3\}$, $\{1,1,2\} \cap \{1,1,3\} = \{1,1\}$, $\{1,1,2\} \not\subseteq \{1,2,3\}$, $\{1,1,2\} \setminus \{1,2\} = \{1\}$, $\{1,1\} \times \{2\}=\{(1,2),(1,2)\}$, $f(\{1,1,2\})=\{f(1),f(1),f(2)\}$.

\section{$R$-free modules over generalized Weyl algebras}\label{S:GWA}

\subsection{Generalities on GWAs}

We begin with the definition of a generalized Weyl algebra (GWA for short). As with Ore extensions, this construction can of course be iterated to obtain more complex algebras (see for example~\cite{BO09}).

\begin{defi}\label{D:GWA}
For a ring $R$, an automorphism $\sigma \in \aut{R}$ and a central element $a\in \cent{R}$, the corresponding generalized Weyl algebra $R(\sigma,a)$ is generated by $R$, $x$, and $y$ subject to the relations
\begin{equation}\label{E:def:GWA}
x\sigma(r)=rx, \quad yr=\sigma(r)y, \quad xy=a, \qquad yx=\sigma(a), 
\end{equation}
for all $r\in R$.
\end{defi}

\begin{rmk}\label{R:general-facts-GWA}\hfill
\begin{enumerate}[label=(\roman*)]
\item In the literature, it is more common to find the definition of a GWA with $x$ and $y$ interchanged in the relations~\cref{E:def:GWA}. We opted for the above convention to be consistent with the usage in \cite{FLM24}. Given the symmetry of the relations, we believe that this will not confuse the reader.
\item A GWA $R(\sigma,a)$ is $\bb{Z}$-graded by putting $R$ in degree $0$, $x$ in degree $1$ and $y$ in degree $-1$. Moreover, it is both a right and left free $R$-module with basis $\pb{1, x^n, y^n\mid n\in\bb{Z}_{>0}}$. In case $R$ is an $\F$-algebra we implicitly assume that $\sigma$ is $\F$-linear and hence the corresponding GWA is an algebra over $\F$. 
\item $R(\sigma,a)$ is a (noetherian) domain if and only if $R$ is a (noetherian) domain and $a\neq 0$. 
\item $R(\sigma,a)$ is simple if and only if the following conditions hold:
\begin{enumerate}[label=(\alph*)]
    \item $a$ is not a zero divisor in $R$;
    \item $R$ has no proper nonzero two-sided $\sigma$-ideals (see \cref{D:sigma-ideal});
    \item for all $n \ge 1$, $R = Ra + R\sigma^n(a)$;
    \item no positive power of $\sigma$ is an inner automorphism of $R$.
\end{enumerate}
In case $R$ is commutative, condition~(d) simply means that $\sigma$ has infinite order. See~\cite[Theorem~4.2]{B96a} for details.\label{R:general-facts-GWA:simple}
\end{enumerate}
\end{rmk}

\textbf{Henceforth, we fix $A=R(\sigma, a)$ and assume that $A$ is a domain} i.e., that $R$ is a (not necessarily commutative) domain and that $a\neq 0$. 


Next, for the sake of completeness, we construct some key examples and address the isomorphism problem. All of these are well known and, in addition to the seminal works already cited, we refer to \cite{BJ01, RS06} for more details and further results.

\begin{ex}[The enveloping algebra of $\fr{sl}_2$]\label{Eg:sl2-as-GWA}
Consider the commutative polynomial ring $R=\F[H, C]$ with the automorphism $\sigma$ defined by $\sigma(H)=H-1$ and $\sigma(C)=C$. Take $a=C-H(H+1)$. Then, in case $\chara(\F)\neq 2$, the GWA $R(\sigma, a)$ is isomorphic to $U(\fr{sl}_2)$, the enveloping algebra of $\fr{sl}_2$. An explicit isomorphism takes the usual $\fr{sl}_2$ generators $e$, $f$, $h$ to the GWA generators $y$, $x$, $2H$. Under this isomorphism, the element $C$ in the GWA corresponds to the Casimir element $fe+\frac{1}{4}h(h+2)$ in $U(\fr{sl}_2)$.
\end{ex}

\begin{ex}[The minimal primitive quotients of $U(\fr{sl}_2$)]\label{Eg:sl2-prim-quot-as-GWA}
Consider $R=\F[h]$ and $\sigma\in\aut{R}$ defined by $\sigma(h)=h-1$, and choose any $u(h)\in\F[h]$. The GWA $R(\sigma, u)$ has been profusely studied under different names and at different times, notably in \cite{tH93} were they were related to \textit{noncommutative deformations of type-$A$ Kleinian singularities} (see also \cite{aJ77, spS90}). If $\degg u=1$, then $R(\sigma, u)$ is the Weyl algebra and if $\degg u=2$ and $\chara(\F)=0$, then $R(\sigma, u)$ is a minimal (infinite-dimensional) primitive quotient of $U(\fr{sl}_2)$. Using \cref{R:general-facts-GWA}\cref{R:general-facts-GWA:simple} we see that $R(\sigma, u)$ is simple if and only if $\chara(\F)=0$, $u\neq 0$, and there are no roots $\lambda, \mu\in\overline\F$ of $u$ with $\lambda-\mu\in\bb{Z}_{>0}$.
\end{ex}

\begin{ex}[The Smith algebras]\label{Eg:Smith-as-GWA}
The algebras \textit{similar to the enveloping algebra of $\fr{sl}_2$} introduced by Smith in~\cite{spS90} are to the \textit{noncommutative deformations of type-$A$ Kleinian singularities} in \cref{Eg:sl2-prim-quot-as-GWA} as the enveloping algebra of $\fr{sl}_2$ is to its minimal primitive quotients. More concretely, the Smith algebra $\mathcal{S}(g)$, associated with the polynomial $g\in\F[h]$, is the deformation of $U(\fr{sl}_2)$ generated by $x,y,h$ with definition relations:
\begin{equation*}
    [h,y] = y, \quad [h,x] = -x \quad \text{and}\quad [y,x]=g(h).
\end{equation*}
$\mathcal{S}(g)$ can be realized as GWA $R(\sigma,a)$ with $R=\F[h, \theta]$, $\sigma(h)=h-1$, $\sigma(\theta)=\theta +g(h)$ and $a=\theta$. In case $g(h)=h$ we retrieve $U(\fr{sl}_2)$.

If $\chara(\F)=0$, then there exists $u\in\F[h]$ such that $g(h)=\sigma(u)-u=u(h-1)-u(h)$; in this case, the element $z=xy-u=yx-\sigma(u)$ generates the center of $\mathcal{S}(g)$ as a polynomial algebra. For any $\lambda\in\F$, the quotient algebra $\mathcal{S}(g)/\langle z-\lambda\rangle$ is a GWA of the type given in \cref{Eg:sl2-prim-quot-as-GWA}.
\end{ex}

With $R$ and $\sigma$ as before, we can also construct the skew Laurent polynomial ring $R[y^{\pm 1}; \sigma]$ (see \cite{GW89}), which is just the GWA $R(\sigma,1)$.

\begin{lemma}
\label{lemma:GWA_iso}
For each $\varphi \in \aut{R}$ we have
\[R(\sigma,a) \simeq R(\varphi \, \sigma \,\varphi^{-1},\varphi(a)).\]
Moreover, if $a$ is a unit in $R$ then $R(\sigma,a) \simeq R(\sigma,1)= R[y^{\pm 1}; \sigma]$.
\end{lemma}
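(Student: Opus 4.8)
The plan is to write down both isomorphisms explicitly and verify them against the defining relations \cref{E:def:GWA}. The tool throughout is \cref{R:general-facts-GWA}(ii): since $R(\sigma,a)$ is the quotient of the evident presentation which stays free as a left $R$-module on $\{1,x^n,y^n\mid n\in\bb{Z}_{>0}\}$, in particular $R$ embeds in it, and a ring homomorphism out of $R(\sigma,a)$ amounts to a ring homomorphism $f$ from $R$ to the target together with images $X,Y$ of $x,y$ satisfying the relations obtained from \cref{E:def:GWA} by applying $f$. Such a homomorphism is an isomorphism as soon as one exhibits a two-sided inverse on the generating set $R\cup\{x,y\}$.

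For the first assertion, let $x',y'$ denote the canonical generators of $B:=R(\varphi\sigma\varphi^{-1},\varphi(a))$, which is well defined since $\varphi\sigma\varphi^{-1}\in\aut{R}$ and $\varphi(a)\in\cent{R}$. I would define $\Phi\colon R(\sigma,a)\to B$ by $\Phi|_R=\varphi$, $\Phi(x)=x'$, $\Phi(y)=y'$, and check the relations by substituting $s=\varphi(r)$ into the relations $x'\varphi\sigma\varphi^{-1}(s)=sx'$ and $y's=\varphi\sigma\varphi^{-1}(s)y'$ of $B$; these become $x'\varphi\sigma(r)=\varphi(r)x'$ and $y'\varphi(r)=\varphi\sigma(r)y'$, which are exactly $\Phi$ applied to $x\sigma(r)=rx$ and $yr=\sigma(r)y$. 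The remaining two relations give $\Phi(xy)=x'y'=\varphi(a)=\Phi(a)$ and $\Phi(yx)=y'x'=\varphi\sigma\varphi^{-1}(\varphi(a))=\varphi\sigma(a)=\Phi(\sigma(a))$. Running the identical recipe with $\varphi^{-1}$ in place of $\varphi$ produces a homomorphism $\Psi\colon B\to R(\sigma,a)$, and $\Psi\Phi$ and $\Phi\Psi$ restrict to the identity on generators, so $\Phi$ is an isomorphism. (Alternatively, injectivity of $\Phi$ is immediate from $\varphi$ being bijective and $\Phi$ sending the basis $\{1,x^n,y^n\}$ to the basis $\{1,(x')^n,(y')^n\}$.)

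For the second assertion, recall that $R(\sigma,1)=R[y^{\pm1};\sigma]$ by construction. Suppose $a\in R^\times$; then $\sigma(a)\in R^\times$ too. In $R(\sigma,a)$ the element $a^{-1}x$ is a two-sided inverse of $y$: one has $(a^{-1}x)y=a^{-1}(xy)=1$ and $y(a^{-1}x)=\sigma(a^{-1})\,yx=\sigma(a)^{-1}\sigma(a)=1$ using $yr=\sigma(r)y$. Thus $x=ay^{-1}$, so $R(\sigma,a)$ is generated by $R$ together with the unit $y$ subject only to $yr=\sigma(r)y$, which is precisely the presentation of $R[y^{\pm1};\sigma]$. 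Concretely I would define $\Theta\colon R(\sigma,a)\to R[y^{\pm1};\sigma]$ by $\Theta|_R=\id_R$, $\Theta(y)=y$, $\Theta(x)=ay^{-1}$ and verify \cref{E:def:GWA} using $y^{-1}\sigma(r)=ry^{-1}$ and the centrality of $a$ (for instance $\Theta(x)\sigma(r)=ay^{-1}\sigma(r)=ary^{-1}=ray^{-1}=r\Theta(x)$, and $\Theta(y)\Theta(x)=y(ay^{-1})=\sigma(a)yy^{-1}=\sigma(a)$). The inverse sends $r\mapsto r$, $y\mapsto y$ and $y^{-1}\mapsto a^{-1}x$; equivalently, $\Theta$ is a left $R$-module map taking the basis $\{1,x^n,y^n\mid n>0\}$ onto $\{y^n\mid n\in\bb{Z}\}$ up to units of $R$ (since $(ay^{-1})^n$ is a unit times $y^{-n}$), hence it is bijective.

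I do not foresee a genuine obstacle. The only points deserving attention are the left/right bookkeeping forced by \cref{E:def:GWA} --- in particular deriving $ry=y\sigma^{-1}(r)$ and $y^{-1}\sigma(r)=ry^{-1}$ from $yr=\sigma(r)y$ --- the fact that $\Phi$ is $\varphi$-semilinear rather than $R$-linear on $R$, and the appeal to \cref{R:general-facts-GWA}(ii) to guarantee that the assignments above really do define homomorphisms (the presentation does not collapse and $R$ sits inside the GWA).
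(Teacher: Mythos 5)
Your proof is correct and takes essentially the same approach as the paper: both isomorphisms are defined by assigning images to the generators, checked against the defining relations \cref{E:def:GWA}, and inverted explicitly. The only cosmetic difference is in the second isomorphism, where the paper extends $\id_R$ by $x\mapsto x$, $y\mapsto ya$ into $R(\sigma,1)$ while you send $y\mapsto y$, $x\mapsto ay^{-1}$ into $R[y^{\pm 1};\sigma]$ --- these differ only in which generator absorbs the unit $a$, and your additional verifications (relation checks and bijectivity on the $R$-basis) are sound.
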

\begin{proof}
The first isomorphism is given by extending $\varphi$ to an isomorphism $\varphi:R(\sigma,a)\to R(\varphi \, \sigma \,\varphi^{-1},\varphi(a))$ by setting $\varphi(x)=x$ and $\varphi(y)=y$. If $a$ is a unit, then extend $\id_R$ to an isomorphism $\varphi:R(\sigma,a)\to R(\sigma,1)$ with $\varphi(x)=x$ and $\varphi(y)=ya$. 
\end{proof}


\begin{cor}
Up to isomorphism, any GWA over $\F[h]$ is of the form $\F[h](\sigma,a)$ where either 
\[\sigma(h)=h-1 \quad \text{or} \quad \sigma(h)=\gamma h,\]
for some $\gamma \in \F^\times$.
\end{cor}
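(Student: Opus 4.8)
The plan is to combine two facts: the classification of $\F$-algebra automorphisms of the polynomial ring $\F[h]$, and the first isomorphism in \cref{lemma:GWA_iso}, which lets us replace $\sigma$ by any conjugate $\varphi\sigma\varphi^{-1}$ at the harmless cost of replacing the central element $a$ by $\varphi(a)$ (harmless because the statement imposes no condition on it).

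First I would recall that every $\varphi\in\aut{\F[h]}$ is affine, i.e.\ $\varphi(h)=\alpha h+\beta$ for some $\alpha\in\F^\times$ and $\beta\in\F$: indeed $\varphi(h)$ must generate $\F[h]$ as an $\F$-algebra, which forces $\degg\varphi(h)=1$ with invertible leading coefficient. In particular the defining automorphism $\sigma$ of the given GWA satisfies $\sigma(h)=\alpha h+\beta$ for some such $\alpha,\beta$.

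Next I would carry out the change of variables. Writing $\varphi(h)=ch+d$ with $c\in\F^\times$, $d\in\F$, a short computation gives
\[(\varphi\,\sigma\,\varphi^{-1})(h)=\alpha h+c^{-1}\bigl((\alpha-1)d+\beta\bigr).\]
If $\alpha\neq 1$, taking $c=1$ and $d=\beta/(1-\alpha)$ yields $(\varphi\,\sigma\,\varphi^{-1})(h)=\alpha h$, so by \cref{lemma:GWA_iso} the GWA is isomorphic to one with $\sigma'(h)=\gamma h$ for $\gamma=\alpha$. If $\alpha=1$ and $\beta\neq 0$, taking $d=0$ and $c=-\beta$ yields $(\varphi\,\sigma\,\varphi^{-1})(h)=h-1$. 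Finally, if $\alpha=1$ and $\beta=0$ then $\sigma=\id_{\F[h]}$, which is already of the form $\sigma(h)=\gamma h$ with $\gamma=1$.

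I do not expect any genuine obstacle here: the only point that needs justification is the description of $\aut{\F[h]}$, and the rest is the elementary bookkeeping above. The one thing to be careful about is tracking which side the conjugating automorphism acts on in \cref{lemma:GWA_iso} (it sends $R(\sigma,a)$ to $R(\varphi\,\sigma\,\varphi^{-1},\varphi(a))$), so that the normalizations of $c$ and $d$ are chosen to land on $h-1$ or $\gamma h$ rather than their inverses.
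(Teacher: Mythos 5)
Your proposal is correct and follows exactly the route the paper intends: the paper states the corollary as an immediate consequence of \cref{lemma:GWA_iso}, relying on the fact that every automorphism of $\F[h]$ is affine and can be normalized by conjugation to $h\mapsto\gamma h$ (when the linear coefficient is $\neq 1$) or to $h\mapsto h-1$ (when it equals $1$ with nonzero translation part). Your computation of $(\varphi\,\sigma\,\varphi^{-1})(h)$ and the choices of $c,d$ in each case check out, so there is nothing to add.
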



Thus, without loss of generality, in case $R=\F[h]$ we can always assume that $\sigma(h)=h-1$ (named the \textit{classical type}) or that $\sigma(h)=\gamma h$ with $\gamma \in \F^\times$ (named the \textit{quantum type}).

\subsection{$R$-free modules}

An $A$-module $V$ is said to be \textit{$R$-free} if $V \simeq \bigoplus_{i\in I}{}_R R$ as left $R$-modules, for some indexing set $I$; in this situation, we say that the rank of $V$ is $|I|$. In case $R$ has IBN (\textit{invariant basis number}), this notion of rank is invariant under $R$-isomorphism. Note that commutative rings as well as (left) noetherian rings have IBN.

When $R$ has IBN we further define the following full subcategories of $A\text{-}\mathsf{Mod}$:
\[
\begin{aligned}
\mathscr{C} &= \{V \in A\text{-}\mathsf{Mod} \mid \mathrm{Res}_R^A\,V \text{ is free}\},\\
\mathscr{C}_n &= \{V \in A\text{-}\mathsf{Mod} \mid \mathrm{Res}_R^A\,V \simeq ({}_R R)^n\},\\
\mathscr{C}^{\mathrm{fg}} &= \{V \in A\text{-}\mathsf{Mod} \mid \mathrm{Res}_R^A\,V \text{ is finitely generated}\}.
\end{aligned}
\]
We note that $\mathscr{C}^{\mathrm{fg}}$ contains all finite rank $R$-free modules $\bigcup_{n\geq 1} \mathscr{C}_n$; it also contains all finitely supported weight modules, see \cref{SS:PID:weight-mods} below.

\begin{lemma}\label{L:hom:simplification}
Let $V, W$ be $A$-modules with $W$ torsionfree as an $R$-module. Let $\seq{v_i}_{i\in I}$ be a generating set for $V$ as an $R$-module. Then
\begin{equation*}
\Hom_A(V, W)=\pb{\varphi\in\Hom_R(V, W) \mid \varphi(xv_i)=x\varphi(v_i),\ \text{for all $i\in I$}}. 
\end{equation*}
\end{lemma}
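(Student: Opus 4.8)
The plan is to prove the two inclusions separately. The inclusion "$\subseteq$" is immediate: any $A$-module homomorphism $\varphi\colon V\to W$ is in particular an $R$-module homomorphism (since $R\subseteq A$), and it commutes with the action of $x\in A$, so in particular $\varphi(xv_i)=x\varphi(v_i)$ for all $i$. The content is in the reverse inclusion.

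So suppose $\varphi\in\Hom_R(V,W)$ satisfies $\varphi(xv_i)=x\varphi(v_i)$ for all $i\in I$. We must show $\varphi$ is $A$-linear. Since $A$ is generated as a ring by $R$, $x$, and $y$, and $\varphi$ is already $R$-linear, it suffices to check that $\varphi(xv)=x\varphi(v)$ and $\varphi(yv)=y\varphi(v)$ for all $v\in V$. First I would handle $x$: an arbitrary $v\in V$ can be written $v=\sum_j r_j v_{i_j}$ with $r_j\in R$ (finite sum), and then using the GWA relation $xr=\sigma^{-1}(r)x$ (rearranged from $x\sigma(r)=rx$), together with $R$-linearity of $\varphi$ and the hypothesis on the $v_i$, one computes
\[
\varphi(xv)=\varphi\Bigl(\sum_j \sigma^{-1}(r_j)\,xv_{i_j}\Bigr)=\sum_j \sigma^{-1}(r_j)\,\varphi(xv_{i_j})=\sum_j \sigma^{-1}(r_j)\,x\varphi(v_{i_j})=x\varphi(v),
\]
so $\varphi$ commutes with $x$ on all of $V$.

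The main obstacle is the action of $y$, since the hypothesis only mentions $x$; this is exactly where torsionfreeness of $W$ is used. The key identity is $xy=a\in R$ (and $a\neq 0$ since $A$ is a domain). For any $v\in V$, using that $\varphi$ commutes with $x$ (just established) and is $R$-linear,
\[
x\bigl(\varphi(yv)-y\varphi(v)\bigr)=\varphi(xyv)-xy\varphi(v)=\varphi(av)-a\varphi(v)=0.
\]
Now I would invoke the relation $xy=a$ once more in the form: multiplying $x\bigl(\varphi(yv)-y\varphi(v)\bigr)=0$ on the left by $y$ gives $yx\bigl(\varphi(yv)-y\varphi(v)\bigr)=0$, i.e.\ $\sigma(a)\bigl(\varphi(yv)-y\varphi(v)\bigr)=0$ in $W$; since $\sigma(a)\neq 0$ and $W$ is $R$-torsionfree, we conclude $\varphi(yv)=y\varphi(v)$. (Alternatively, apply $\sigma$ to both sides of $a\cdot\text{something}=0$ after using $yx=\sigma(a)$; either route works.) This completes the verification that $\varphi$ is $A$-linear, and hence the reverse inclusion. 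I expect the only subtlety worth flagging in the write-up is making sure the GWA relations are applied in the correct twisted form (i.e.\ $xr=\sigma^{-1}(r)x$, not $\sigma(r)x$) and that $a\neq 0$ is cited from the standing assumption that $A$ is a domain.
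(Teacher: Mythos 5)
Your proposal is correct and follows essentially the same route as the paper: first extend the $x$-compatibility from the generators to all of $V$ using $xr=\sigma^{-1}(r)x$ and $R$-linearity, then deduce $y$-compatibility from $xy=a$, $yx=\sigma(a)$ and torsionfreeness of $W$ with respect to the nonzero element $\sigma(a)$. The only difference is cosmetic — you show $x\bigl(\varphi(yv)-y\varphi(v)\bigr)=0$ and then multiply by $y$, while the paper computes the chain $\sigma(a)\varphi(yv)=yx\varphi(yv)=\dots=\sigma(a)y\varphi(v)$ directly — which is the same argument.
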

\begin{proof}
For the nontrivial inclusion, suppose that $\varphi:V\to W$ is an $R$-map such that $\varphi(xv_i)=x\varphi(v_i)$, for all $i\in I$. Then, given $v\in V$, write $v=\sum_{i\in I}r_i v_i$. So
\begin{align*}
\varphi(xv)&=\sum_{i\in I}\varphi(xr_i v_i)=\sum_{i\in I}\varphi(\sigma^{-1}(r_i) xv_i)=
\sum_{i\in I}\sigma^{-1}(r_i) x\varphi(v_i)\\&=\sum_{i\in I} xr_i\varphi(v_i)=x\varphi(v).
\end{align*}
Moreover,
\begin{equation*}
\sigma(a)\varphi(yv)=yx \varphi(yv)=y \varphi(xyv)=y \varphi(av)=ya \varphi(v)=\sigma(a)y\varphi(v).
\end{equation*}
As $\sigma(a)$ is regular and $W$ is torsionfree, it follows that $\varphi(yv)=y\varphi(v)$.
\end{proof}

Recall that $R^\sigma=\pb{r\in R\mid \sigma(r)=r}$ is a (unital) subring of $R$, usually called the \textit{ring of $\sigma$-invariants} in $R$. The following result records a few useful facts about the center of $A$. In case $R$ is commutative, a description of $\cent{A}$ can also be found in \cite[Corollary 2.0.2]{K01}.

\begin{lemma} \label{L:cent}
The following hold for $A=R(\sigma, a)$:
\begin{enumerate}[label=(\alph*)]
\item $\cent{A}\cap R=R^\sigma\cap\cent{R}$.\label{L:cent:a}
\item If the orbit of $a$ under $\sigma$ is infinite, then $\cent{A}=R^\sigma\cap\cent{R}$.\label{L:cent:b}
\item Assuming that $R$ is commutative, if $\sigma$ has order $k\geq 1$ then $\cent{A}$ is a GWA over $R^\sigma$, where the canonical GWA generators are $x^k$ and $y^k$; otherwise, if $\sigma$ has infinite order, then $\cent{A}=R^\sigma$.\label{L:cent:c}
\end{enumerate} 
\end{lemma}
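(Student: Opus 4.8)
The plan is to exploit the $\bb{Z}$-grading of $A$ recorded in \cref{R:general-facts-GWA}(ii). Since $\cent{A}$ is the intersection of the centralizers of the generators $R$, $x$ and $y$, and each of those centralizers is a graded subspace of $A$ — commuting with a degree-$0$ element preserves degrees, while for $z=\sum_m z_m$ the degree-$(n+1)$ parts of $zx$ and $xz$ are $z_nx$ and $xz_n$, and similarly for $y$ — the center itself is graded. By \cref{R:general-facts-GWA}(ii) it therefore suffices to describe $R\cap\cent{A}$ together with the central elements of the form $rx^n$ and $ry^n$ with $r\in R$ and $n>0$. Part \cref{L:cent:a} is then a direct verification: if $r\in R\cap\cent{A}$ then $r\in\cent{R}$, and from $xr=rx=\sigma^{-1}(r)x$ and the freeness of $A$ as a left $R$-module we get $\sigma(r)=r$; conversely $r\in R^\sigma\cap\cent{R}$ commutes with all of $R$, and the relations \cref{E:def:GWA} give $xr=\sigma^{-1}(r)x=rx$ and $yr=\sigma(r)y=ry$, so $r\in\cent{A}$.

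Next I would analyze a central element $z=rx^n$ with $n>0$ (the case $z=ry^n$ is symmetric, with the roles of $x$ and $y$ swapped). Commuting $z$ with $x$ forces $r\in R^\sigma$, exactly as in part \cref{L:cent:a}. Commuting $z$ with $y$, after pushing the central element $a$ leftward past the relevant power of $x$ and using $yx=\sigma(a)$, gives upon comparing coefficients the relation $r\,\sigma^n(a)=r\,a$. Finally — and this step is needed only in the commutative setting — commuting $z$ with a general $s\in R$ gives $r\,\sigma^{-n}(s)=r\,s$. Since $R$ is a domain, a nonzero $r$ forces $\sigma^n(a)=a$ in the first case and $\sigma^n=\id_R$ in the second. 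Part \cref{L:cent:b} is now immediate: if the $\sigma$-orbit of $a$ is infinite then $\sigma^n(a)\neq a$ for every $n\geq1$, so all homogeneous components of nonzero degree vanish and $\cent{A}=R\cap\cent{A}=R^\sigma\cap\cent{R}$. For part \cref{L:cent:c} with $\sigma$ of infinite order, $\sigma^n\neq\id_R$ for all $n\geq1$ gives the same conclusion, now with $\cent{R}=R$; note that this case does not follow from part \cref{L:cent:b}, since the orbit of $a$ may well be finite while $\sigma$ has infinite order.

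When $\sigma$ has finite order $k\geq1$, the constraints above show that, for $n>0$, the degree-$n$ component of $\cent{A}$ equals $R^\sigma x^n$ if $k\mid n$ and is $0$ otherwise, symmetrically in negative degrees, while the degree-$0$ component is $R^\sigma$; hence $\cent{A}=R^\sigma\oplus\bigoplus_{m>0}\bigl(R^\sigma(x^k)^m\oplus R^\sigma(y^k)^m\bigr)$. To recognize this as a GWA I would set $\tilde a:=x^ky^k$ and check that $\tilde a=\prod_{i=0}^{k-1}\sigma^{-i}(a)$, that this element lies in $R^\sigma$ (applying $\sigma$ cyclically permutes the factors), that $y^kx^k=\prod_{i=1}^{k}\sigma^i(a)$ equals $\tilde a$ as well (the two index sets give a complete set of residues modulo $k$ and $R$ is commutative), and that $x^k$ and $y^k$ centralize $R^\sigma$. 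These are exactly the defining relations of $R^\sigma(\id,\tilde a)$, so the assignment $x^k\mapsto x^k$, $y^k\mapsto y^k$ together with the identity on $R^\sigma$ extends to a surjective homomorphism $R^\sigma(\id,\tilde a)\to\cent{A}$; it is injective because the standard left $R^\sigma$-basis $\{1\}\cup\{(x^k)^m,(y^k)^m:m>0\}$ of the source maps into the left $R$-basis of $A$ and hence to an $R^\sigma$-linearly independent family.

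The individual calculations are routine; the step requiring the most care is the bookkeeping with the twisting automorphism — being consistent about which of $xs=\sigma^{-1}(s)x$, $ys=\sigma(s)y$, $xy=a$, $yx=\sigma(a)$ is applied, and in which direction, when moving scalars past powers of $x$ and $y$ — together with the multiset identity modulo $k$ that yields $x^ky^k=y^kx^k$ in the finite-order case, which is precisely what allows $\cent{A}$ to be written as a GWA over $R^\sigma$ with trivial twisting automorphism.
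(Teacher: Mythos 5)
Your proposal is correct and follows essentially the same route as the paper: both use the $\bb{Z}$-grading of $\cent{A}$, the bracket computations $[x,rx^n]$, $[y,rx^n]=r(\sigma(a)-\sigma^{1-n}(a))x^{n-1}$ and $[rx^n,s]=r(\sigma^{-n}(s)-s)x^n$, and then identify $\cent{A}$ in the finite-order case as the GWA over $R^\sigma$ on $x^k,y^k$ with central element $a\sigma(a)\cdots\sigma^{k-1}(a)$ (your $\prod_{i=0}^{k-1}\sigma^{-i}(a)$, the same element since $\sigma^k=\id$). The only difference is that you spell out the verification the paper labels ``immediate'' (the relations $x^ky^k=y^kx^k=\tilde a\in R^\sigma$ and the freeness argument for injectivity), which is a harmless elaboration.
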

\begin{proof}
Using the $\bb{Z}$-grading of $A$, we see that $\cent{A}$ is also $\bb{Z}$-graded, in the sense that an element is central if and only if each one of its homogeneous components is central. We denote by $A_n$ the homogeneous component of $A$ of degree $n$. 

So suppose that $z\in A_0=R$. Then $z\in\cent{A}$ if and only if $z\in\cent{R}$ and $[x,z]=0=[y, z]$, and the latter holds if and only if $z\in R^\sigma$. This establishes~\cref{L:cent:a}.

Now suppose that $0\neq z\in A_n$ for some $n\neq 0$. By symmetry, we can assume that $n\geq 1$, so that $z=rx^n$, for some $0\neq r\in R$. Then $[x, z]=0$ is tantamount to $r\in R^\sigma$ and in this case $[y, z]=r(\sigma(a)-\sigma^{1-n}(a))x^{n-1}$. Thus, if $z\in\cent{A}$ then $\sigma^n(a)=a$. So in case the orbit of $a$ under $\sigma$ is infinite, we get $\cent{A}\subseteq R$ and \cref{L:cent:b} now follows from \cref{L:cent:a}.

For \cref{L:cent:c}, assume that $R$ is commutative. By \cref{L:cent:a} we have $\cent{A}\cap R=R^\sigma$. Given $r, s\in R$ with $r\neq 0$ and $n\geq 1$, $[rx^n, s]=r(\sigma^{-n}(s)-s)x^n$. By the above, $rx^n$ is central if and only if $r\in R^\sigma$ and $\sigma^n=\id$. The same result holds for $ry^n$. Hence, if $\sigma$ has infinite order then $\cent{A}=R^\sigma$; otherwise, if $\sigma$ has order $k\geq 1$ then $\cent{A}$ is a free $R^\sigma$-module with basis $\pb{1, X^n, Y^n\mid n\in\bb{Z}_{>0}}$, where $X=x^k$ and $Y=y^k$. It is immediate to see that this is a GWA over $R^\sigma$ associated to the identity automorphism and the central element $a\sigma(a)\cdots \sigma^{k-1}(a)$.
\end{proof}

Next, we see that the existence of \textit{interesting} $R$-free $A$-modules imposes strong restrictions on the center of $A$. 

\begin{prop}\label{P:modules-of-finite-len}
Suppose that there exists some nonzero $R$-free $A$-module with finite length. Then the following hold:
\begin{enumerate}[label=(\alph*)]
\item $\cent{A}\cap R$ is a field;
\item $R^{<\infty}=\left(R^\times\cap\cent{R}\right)\cup\{0\}$,
\end{enumerate}
where $R^{<\infty}=\{r\in \cent{R}\mid \sigma^n(r)=ur, \ \text{for some $0\neq n\in\bb{Z}$ and $u\in R^\times$}\}$.
\end{prop}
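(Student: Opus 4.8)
The plan is to use two properties of the assumed module $V$ in tandem: finite length (so $A$-submodules of $V$ satisfy the descending chain condition) and $R$-freeness of $\mathrm{Res}_R^A V$ (which lets us convert containments of $R$-submodules into divisibility statements in $R$). Fix once and for all a nonzero $R$-free $A$-module $V$ of finite length and an $R$-module isomorphism $\phi\colon V\xrightarrow{\ \sim\ }F:=\bigoplus_{i\in I}{}_R R$ with $I\neq\emptyset$; since $R$ is a domain, $F$ and hence $V$ is $R$-torsionfree.

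For (a), I would note that for nonzero $z\in\cent{A}\cap R$, left multiplication $\mu_z\colon v\mapsto zv$ is an endomorphism of the $A$-module $V$ (as $z$ is central in $A$), and it is injective (as $V$ is $R$-torsionfree and $z\neq 0$), hence bijective, since an injective endomorphism of a finite-length module is an isomorphism. Because $\phi$ is $R$-linear it intertwines $\mu_z$ on $V$ with $\mu_z$ on $F$, so multiplication by $z$ is surjective on $F$; applying this to the element of $F$ with $1$ in one coordinate and $0$ elsewhere yields $r\in R$ with $zr=1$, and centrality of $z$ gives $rz=1$ as well, so $z\in R^\times$. A short check that $z^{-1}\in R^\sigma\cap\cent{R}=\cent{A}\cap R$ (using \cref{L:cent}\cref{L:cent:a}) shows that every nonzero element of the commutative domain $\cent{A}\cap R$ is invertible, i.e.\ it is a field.

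For (b), the inclusion $(R^\times\cap\cent{R})\cup\{0\}\subseteq R^{<\infty}$ is immediate ($0$ trivially; for a central unit $r$ take $n=1$, $u=\sigma(r)r^{-1}$). For the converse, given $0\neq r\in R^{<\infty}$ with $\sigma^n(r)=ur$, $n\neq 0$, $u\in R^\times$, I would first reduce to $n\geq 1$ by applying $\sigma^{-n}$ if necessary, and then — the crucial step — replace $r$ by $b:=\prod_{i=0}^{n-1}\sigma^i(r)\in\cent{R}$; a short computation with $\sigma^n(r)=ur$ and centrality gives $\sigma(b)=ub$, and $b$ is a unit (resp.\ zero) exactly when $r$ is. Assuming for contradiction that $b$ is not a unit, set $W_m:=b^mV$; using $\sigma(b)=ub$ together with the defining relations of $A$ one checks that each $W_m$ is an $A$-submodule and that $V=W_0\supseteq W_1\supseteq W_2\supseteq\cdots$. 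Finite length forces $W_{m_0}=W_{m_0+1}$ for some $m_0$; transporting along $\phi$ identifies $W_m$ with $\bigoplus_{i\in I}Rb^m\subseteq F$, so $Rb^{m_0}=Rb^{m_0+1}$ in $R$, whence $b^{m_0}=cb^{m_0+1}$ for some $c\in R$. As $b$ is central and $R$ is a domain with $b^{m_0}\neq 0$, this gives $cb=bc=1$, contradicting non-invertibility of $b$; hence $b$, and therefore $r$, is a unit.

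The step I expect to be the main obstacle is exactly this passage from $r$ to the product $b=\prod_{i=0}^{n-1}\sigma^i(r)$. Working directly with $\sigma^n(r)=ur$, the natural candidate $\sum_{i=0}^{n-1}\sigma^i(r^m)V$ is also an $A$-submodule, but stabilizing its chain only yields an equality of sums of left ideals $\sum_i R\sigma^i(r)^{m_0}=\sum_i R\sigma^i(r)^{m_0+1}$, which does not force $r$ to be a unit; it is the single-step relation $\sigma(b)=ub$ that makes $b^mV$ behave like a principal submodule, so that $R$-freeness together with the descending chain condition pins down $b$ as a unit. The remaining verifications (that $\mu_z$ and the $W_m$ behave as claimed) are routine manipulations with the GWA relations and with the free $R$-module $F$.
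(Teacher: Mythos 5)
Your proof is correct and, for part (b) — the heart of the proposition — it follows essentially the same route as the paper: your $b=\prod_{i=0}^{n-1}\sigma^i(r)$ is exactly the paper's orbit product $\theta$ with $\sigma(\theta)=u\theta$, and the paper likewise shows the $\theta^kV$ form a descending chain of $A$-submodules which finite length forces to stabilize (the paper phrases this as $\theta V=V$, since torsionfreeness makes the chain strictly decreasing otherwise), after which $R$-freeness yields $\theta\in R^\times$. The only genuine divergence is in (a): the paper deduces it from (b) via $R^\sigma\cap\cent{R}\subseteq R^{<\infty}$, whereas you argue directly that multiplication by a nonzero $z\in\cent{A}\cap R$ is an injective, hence bijective, endomorphism of the finite-length module — a minor but equally valid shortcut.
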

\begin{proof}
Note first that $\left(R^\times\cap\cent{R}\right)\cup\{0\}\subseteq R^{<\infty}$ always holds. Suppose that $0\neq r\in R^{<\infty}$, say $\sigma^n(r)=ur$ with $r\in \cent{R}$, $n\neq 0$ and $u\in R^\times$. Without loss of generality, we can assume that $n>0$. Set $\theta=r\sigma(r)\cdots\sigma^{n-1}(r)$. Then $\theta\in\cent{R}$ and $\theta\neq 0$, as $R$ is a domain. Moreover, $\sigma(\theta)=u\theta$. We need to show that $r\in R^\times$ and it suffices to show that $\theta\in R^\times$.

Let $V\neq 0$ be an $R$-free $A$-module of finite length. 
We have that $\theta V$ is an $A$-submodule of $V$ since $R\theta V=\theta RV\subseteq \theta V$,
\begin{align*}
 x\cdot \theta V=\sigma^{-1}(\theta)xV=u^{-1}\theta xV=\theta u^{-1} xV\subseteq \theta V,
\end{align*}
and similarly $y\cdot \theta V\subseteq  \theta V$. Thus, $\theta^k V$ is an $A$-submodule of $V$, for all $k\in\bb{Z}_{\geq 0}$.

Suppose that $\theta V\subsetneq V$. As $\theta\neq 0$ and $V$ is $R$-free, hence $R$-torsionfree (because $R$ is a domain), it follows that $\theta^{k+1} V\subsetneq \theta^k V$, for all $k\in\bb{Z}_{\geq 0}$. So we obtain a proper infinite descending chain of $A$-submodules of $V$, showing that $V$ is not an artinian $A$-module and thus doesn't have finite length. The latter contradicts the hypothesis, so $\theta V=V$. Using the freeness of $V$ over $R$, we conclude that $\theta\in R^\times$, so $r\in R^\times\cap\cent{R}$.

It follows in particular that $R^\sigma\cap\cent{R}\subseteq \left(R^\times\cap\cent{R}\right)\cup\{0\}$, so if $0\neq u\in R^\sigma\cap\cent{R}$, it follows that $uu'=1=u'u$ for some $u'\in R$. Moreover, $u'\in\cent{R}$ and  $u\sigma(u')=\sigma(u)\sigma(u')=\sigma(1)=1$; multiplying on the left by $u'$ shows that $u'\in R^\sigma$. As $R^\sigma\cap\cent{R}$ is a commutative subalgebra of $R$, we conclude that $R^\sigma\cap\cent{R}=\cent{A}\cap R$ is a field, by \cref{L:cent}.
\end{proof}

By the above, unless $\cent{A}\cap R$ is a field, there will be no simple $R$-free $A$-modules.

\begin{rmk}\label{Rk:quotient-by-nonunits}
In case $u\in R^\sigma\cap\cent{R}$ is a nonzero nonunit and $V\neq 0$ is an $R$-free $A$-module, the quotient module $\overline V=V/uV$ can still be interesting to look at. In fact, $\overline V$ is naturally a module for $\overline A=A/uA$, which can be realized as a GWA using $\overline A\simeq \overline R(\overline\sigma, \overline a)$, where $\overline\sigma$ is the automorphism of $\overline R=R/uR$ induced by $\sigma$ and $\overline a$ is the class of $a$ in $\overline R$, see \cite[Proposition 2.12]{BO09}. In case the ideal $uR$ is completely prime and $\overline a\neq 0$, the GWA $\overline A$ is a domain.
\end{rmk}

\begin{ex}[{$\F[H]$}-free modules for $\fr{sl}_2$]\label{Eg:sl2:more}
Recall from~\cref{Eg:sl2-as-GWA} that $U(\fr{sl}_2)$ can be seen as the GWA $A=R(\sigma, a)$ with $R=\F[H, C]$, $\sigma(f(H, C))=f(H-1, C)$ and $a=C-H(H+1)$. Assume that $\chara(\F)=0$. Then $\cent{A}=R^\sigma=\F[C]$ is not a field. Take $\lambda\in \F$ and consider the nonunit $u=C-\lambda\in\cent{A}$. Then, as explained in~\cref{Rk:quotient-by-nonunits}, we can consider $\overline A=U(\fr{sl}_2)/\langle C-\lambda \rangle$, which is a minimal primitive quotient of $U(\fr{sl}_2)$ and can be described as a GWA over $\overline R=R/\langle C-\lambda\rangle\simeq \F[H]$, with $\overline\sigma(H)=H-1$ and $\overline a=\lambda-H(H+1)$. Now we have $\cent{\overline A}=\overline R^{\overline\sigma}=\F$ and we can obtain with this procedure finite length (and simple) $\F[H]$-free modules for $\fr{sl}_2$ and more generally for the Smith algebras in~\cref{Eg:Smith-as-GWA}. This explains the otherwise \textit{ad hoc} assumption or verification in the literature that $\F[H]$-free modules for the aforementioned algebras have central character, even if they are not simple (compare~\cite[Lemma 3.8]{FLM24}).
\end{ex}

\subsection{The rank one modules $V_\p$}\label{SS:C1mods}

Suppose that $V$ is an $A$-module which restricts to the left regular $R$-module $V={}_R R$. Up to isomorphism, any module in $\mathscr{C}_1$ has this form. Set $\p:=x\cdot 1$ and $\q:=y \cdot 1$. Then
\begin{equation*}
a=a\cdot 1=(xy)\cdot 1=x\cdot\q=x\cdot(\q\cdot 1)= (x\q)\cdot 1=(\sigma^{-1}(\q)x)\cdot 1=\sigma^{-1}(\q)\p.
\end{equation*}
A similar calculation using $yx=\sigma(a)$ and applying $\sigma^{-1}$ to the result in $R$ shows that $a=\p\sigma^{-1}(\q)$. However, given our assumptions, if $a=v\p$ for some $v\in R$ then $(a-\p v)\p=a\p-\p a=0$, so $\p v=a=v\p$ and moreover $v$ is uniquely determined by $\p$. In short, we can just talk about divisors of $a$, and $\frac{a}{\p}$ is unambiguous if $\p\mid a$ in $R$. Notice also that $\p$ determines the action of $A$ on $V={}_R R$, as we must have
\[ x \cdot r =x \cdot (r\cdot 1) = (xr) \cdot 1 = ((\sigma^{-1}(r)x)\cdot 1= \sigma^{-1}(r)\p,\] 
and similarly $y\cdot r = \sigma(r)\q$, for all $r\in R$.

The above argument can be reversed and it's straightforward to check that the GWA relations are compatible with the above action.

\begin{defi}\label{D:Vp}
Let $\p$ be a divisor of $a$ and set $\q=\sigma\seq{\frac{a}{\p}}$, so that $a=\sigma^{-1}(\q)\p$.
Let $V_\p=R$ as an abelian group, equipped with the following module structure over $A=R(\sigma,a)$:
\[r' \cdot r =r'r, \quad x \cdot r = \sigma^{-1}(r)\p, \quad y\cdot r = \sigma(r)\q,\]  
for all $r, r'\in R$.
\end{defi}

Hereafter, $\p$ will always denote a divisor of $a$, $\q=\q_\p=\sigma\seq{\frac{a}{\p}}$ and $V_\p$ is the $A$-module given in~\cref{D:Vp}.

\begin{prop}\label{P:class-rk-1-general}
Any $A$-module in $\mathscr{C}_1$ is isomorphic to $V_\p$, for some $\p\in R$ that divides $a$. 
\end{prop}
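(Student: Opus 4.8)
The plan is to unpack the definition of membership in $\mathscr{C}_1$ and show that the preliminary discussion preceding \cref{D:Vp} already does all the work, so that the proposition is essentially a matter of assembling observations. First I would let $V\in\mathscr{C}_1$ and fix an $R$-module isomorphism $V\simeq {}_RR$; transporting the $A$-module structure along this isomorphism, we may assume $V={}_RR$ as a left $R$-module, with some $A$-action extending the regular $R$-action. Setting $\p:=x\cdot 1$ and $\q:=y\cdot 1$ as in the text, the element $\p$ lies in $R$, and I would then invoke the computation already carried out: from $xy=a$ one gets $a=\sigma^{-1}(\q)\p$, and from $yx=\sigma(a)$ together with applying $\sigma^{-1}$ one gets $a=\p\,\sigma^{-1}(\q)$. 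Hence $\p$ is a two-sided divisor of $a$; the uniqueness remark (if $a=v\p$ then $v$ is forced, and $\q=\sigma(a/\p)$) identifies $\q=\q_\p$. So the action on all of $V={}_RR$ is determined by $x\cdot r=\sigma^{-1}(r)\p$ and $y\cdot r=\sigma(r)\q$, which is exactly the action in \cref{D:Vp}. Therefore $V=V_\p$ on the nose, not merely up to isomorphism — but since we only assumed $V\simeq{}_RR$, the conclusion for a general $V\in\mathscr{C}_1$ is that $V\cong V_\p$.

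The one genuine point that needs care, and the only candidate for a ``main obstacle,'' is making sure that what we get really is an $A$-module, i.e.\ that the assignment $x\cdot r=\sigma^{-1}(r)\p$, $y\cdot r=\sigma(r)\q$ is compatible with \emph{all} the GWA relations $x\sigma(r)=rx$, $yr=\sigma(r)y$, $xy=a$, $yx=\sigma(a)$ — but this is not really an obstacle here, because for \emph{this} proposition we start from a module $V$ that is already an $A$-module, so the relations hold automatically; the well-definedness of $V_\p$ as an $A$-module is the content of the sentence ``The above argument can be reversed\ldots'' just before \cref{D:Vp} and of \cref{D:Vp} itself. So the proof really only has to run the forward direction. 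I would therefore write the proof as: reduce to $V={}_RR$; set $\p=x\cdot 1\in R$; derive $a=\sigma^{-1}(\q)\p=\p\sigma^{-1}(\q)$ with $\q=y\cdot 1$, so $\p\mid a$ and $\q=\sigma(a/\p)=\q_\p$; observe the $A$-action on $V$ coincides with that of \cref{D:Vp}; conclude $V\cong V_\p$.

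I expect the write-up to be short. The subtlety worth a sentence is the passage from ``$a=v\p$ implies $\p v=a$ and $v$ unique'' — this uses that $A$ (equivalently $R$) is a domain, which is in force by the standing assumption, so $\p v=a=v\p$ forces the two-sided divisor $v=a/\p$ to be well defined and $\q=\sigma(v)$. Below is the proof.

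\begin{proof}
Let $V\in\mathscr{C}_1$. By definition $\mathrm{Res}^A_R V\simeq {}_RR$, so fixing such an isomorphism and transporting the $A$-action across it we may assume that $V={}_RR$ as a left $R$-module, equipped with some $A$-module structure extending the regular action of $R$. Put $\p:=x\cdot 1\in R$ and $\q:=y\cdot 1\in R$. Using $xy=a$ in $A$ and the module axioms,
\[
a=a\cdot 1=(xy)\cdot 1=x\cdot\q=(x\q)\cdot 1=\bigl(\sigma^{-1}(\q)x\bigr)\cdot 1=\sigma^{-1}(\q)\,\p .
\]
Likewise, from $yx=\sigma(a)$ we get $\sigma(a)=(yx)\cdot 1=y\cdot\p=(y\p)\cdot 1=\bigl(\sigma(\p)y\bigr)\cdot 1=\sigma(\p)\,\q$; applying $\sigma^{-1}$ gives $a=\p\,\sigma^{-1}(\q)$. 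Thus $\sigma^{-1}(\q)\,\p=a=\p\,\sigma^{-1}(\q)$. Since $A$, and hence $R$, is a domain, writing $v:=\sigma^{-1}(\q)$ we have $v\p=a=\p v$, so $\p$ is a (two-sided) divisor of $a$ and $v=\frac{a}{\p}$ is the unambiguous complementary factor; consequently $\q=\sigma(v)=\sigma\!\left(\frac{a}{\p}\right)=\q_\p$.

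Finally, for every $r\in R$ the module axioms force
\[
x\cdot r=x\cdot(r\cdot 1)=(xr)\cdot 1=\bigl(\sigma^{-1}(r)x\bigr)\cdot 1=\sigma^{-1}(r)\,\p,
\qquad
y\cdot r=(yr)\cdot 1=\bigl(\sigma(r)y\bigr)\cdot 1=\sigma(r)\,\q,
\]
which is exactly the $A$-action defining $V_\p$ in \cref{D:Vp}. Hence $V=V_\p$ as $A$-modules, and in particular the original module in $\mathscr{C}_1$ is isomorphic to $V_\p$ with $\p\mid a$.
\end{proof}
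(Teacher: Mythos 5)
Your proof is correct and follows essentially the same route as the paper, whose ``proof'' of \cref{P:class-rk-1-general} is exactly the discussion preceding \cref{D:Vp}: reduce to $V={}_RR$, set $\p=x\cdot 1$, $\q=y\cdot 1$, derive $a=\sigma^{-1}(\q)\p=\p\,\sigma^{-1}(\q)$ from the relations $xy=a$ and $yx=\sigma(a)$, and observe that the $A$-action is then forced to be the one in \cref{D:Vp}. Your added care about the two-sidedness and uniqueness of the complementary factor (using that $R$ is a domain) matches the paper's remark verbatim, so there is nothing to correct.
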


The following notion will be useful.

\begin{defi}\label{D:sigma-ideal}
We say that a (left) ideal $I$ of $R$ is a (left) \textit{$\sigma$-ideal} if $\sigma(I)= I$.  In case $R$ is (left) noetherian, the weaker condition $\sigma(I)\subseteq I$ implies that $I$ is a (left) $\sigma$-ideal (see e.g.~\cite[Lemma 2.11]{BO09}).
\end{defi}

The next result characterizes the submodule structure of $V_\p$ in a few cases of interest. 

\begin{lemma}
\label{L:subm-Vp-units}
Suppose that $R$ is left noetherian and let $\p$ be a divisor of $a$.
If either one of $\p$ or $\q$ is a central unit in $R$, then the $A$-submodules of $V_{\p}$ are precisely the left $\sigma$-ideals of $R$. 
\end{lemma}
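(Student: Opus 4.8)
The plan is to rewrite "$A$-submodule of $V_\p$" as an explicit stability condition on left ideals of $R$, and then use the central-unit hypothesis to collapse one of the two conditions into the other.

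First I would record the reduction. Since $V_\p$ restricted to $R$ is the left regular module ${}_RR$, its $R$-submodules are exactly the left ideals of $R$; and as $A$ is generated by $R$, $x$, $y$, a left ideal $I$ of $R$ is an $A$-submodule of $V_\p$ if and only if $x\cdot I\subseteq I$ and $y\cdot I\subseteq I$, i.e.
\[
\sigma^{-1}(I)\,\p\subseteq I\qquad\text{and}\qquad \sigma(I)\,\q\subseteq I.
\]
I would also note that $a=\sigma^{-1}(\q)\p$ is central and $\sigma\in\aut R$, so if $\p$ is a central unit then $\q=\sigma(a)\sigma(\p)^{-1}$ is central (though possibly not a unit), and symmetrically if $\q$ is a central unit then $\p=\sigma^{-1}(\q)^{-1}a$ is central; in either case both $\p$ and $\q$ lie in $\cent R$.

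The one elementary fact doing all the work is: for a central unit $b\in R$ and any left ideal $J$ of $R$ one has $Jb=bJ=J$, since left multiplication by $b$ is an automorphism of ${}_RR$. Suppose first that $\p$ is a central unit. Then $x\cdot I=\sigma^{-1}(I)\p=\sigma^{-1}(I)$, so the condition $x\cdot I\subseteq I$ is precisely $\sigma^{-1}(I)\subseteq I$; since $R$ is left noetherian, \cref{D:sigma-ideal} (the ascending-chain argument applied to $I\subseteq\sigma(I)\subseteq\sigma^2(I)\subseteq\cdots$) promotes this to $\sigma(I)=I$, i.e.\ $I$ is a left $\sigma$-ideal. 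Conversely, if $I$ is a left $\sigma$-ideal then $\sigma^{\pm1}(I)=I$, whence $x\cdot I=\sigma^{-1}(I)\p=I\p=\p I\subseteq I$ and, using that $\q$ is central, $y\cdot I=\sigma(I)\q=I\q=\q I\subseteq I$, so $I$ is an $A$-submodule. This settles the case that $\p$ is a central unit. The case that $\q$ is a central unit is symmetric: now $y\cdot I=\sigma(I)\q=\sigma(I)$, so $y\cdot I\subseteq I$ becomes $\sigma(I)\subseteq I$, hence $\sigma(I)=I$ by left noetherianity, and $x\cdot I=\sigma^{-1}(I)\p=I\p=\p I\subseteq I$ is then automatic because $\p$ is central; the converse is again immediate. (Alternatively one can deduce the $\q$-case from the $\p$-case via the $x\leftrightarrow y$ symmetry of the GWA relations, cf.\ \cref{R:general-facts-GWA}(i).)

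I do not anticipate a genuine obstacle here. The only two points requiring care are that the products $\sigma^{\mp1}(I)\p$ and $\sigma^{\pm1}(I)\q$ involve \emph{right} multiplication, which is exactly why one needs $\p$ (resp.\ $\q$) to be central rather than merely invertible, and the step from a one-sided $\sigma$-inclusion to honest $\sigma$-stability, which is precisely what the left-noetherian hypothesis provides (as in \cref{D:sigma-ideal}).
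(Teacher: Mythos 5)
Your proof is correct and follows essentially the same route as the paper's: reduce membership to the stability conditions $\sigma^{-1}(I)\p\subseteq I$ and $\sigma(I)\q\subseteq I$, observe that the non-unit element among $\p,\q$ is still central, use the central-unit hypothesis to turn one condition into $\sigma^{\mp1}(I)\subseteq I$, and invoke left noetherianity to get genuine $\sigma$-stability, with the converse direction handled exactly as in the paper. The only cosmetic differences are that you spell out the $\q$-case rather than appealing to symmetry, and you derive centrality of the other factor from invertibility rather than cancellation in the domain.
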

\begin{proof}
Suppose that $\p$ is a central unit (the case when $\q$ is a central unit is symmetric). Note that, since $\sigma(a)=\q \sigma(\p)$ and both $\sigma(a)$ and $\sigma(\p)$ are central and nonzero in the domain $R$, we can conclude that $\q$ is also central.

Let $I$ be an $A$-submodule of $V_{\p}$. The condition $R\cdot I\subseteq I$ says that $I$ is a left ideal of $R$. We have, for $i\in I$,
\begin{align*}
x\cdot i\in I\iff \sigma^{-1}(i)\p\in I\iff \p\sigma^{-1}(i)\in I \iff  \sigma^{-1}(i)\in \p^{-1} I=I, 
\end{align*}
so $x\cdot I\subseteq I$ if and only if $\sigma^{-1}(I)\subseteq I$. By the noetherian hypothesis, $\sigma^{-1}(I)= I$, so $I$ is a left $\sigma$-ideal. Conversely, if $I$ is a left ideal with $\sigma(I)= I$, then the above shows that $I$ is stable under the actions of $R$ and $x$. 
We have 
\begin{align*}
y\cdot I= \sigma(I)\q=\q\sigma(I)=\q I\subseteq I,
\end{align*}
so $I$ is an $A$-submodule of $V_{\p}$ and the correspondence is established.
\end{proof}

%

The next task is to determine the behaviour of $R$-free $A$-modules with respect to the bifunctor $\Hom_A(-,-)$, and in particular the isomorphisms between the $V_\p$.

\begin{lemma}\label{L:hom:units}
Let $\p, \p'$ be divisors of $a$ and consider $\Psi:\Hom_A(V_{\p}, V_{\p'})\to R$ given by $\varphi\mapsto\varphi(1)$.
\begin{enumerate}[label=(\alph*)]
\item $\Psi$ is bijective onto $\pb{v\in R\mid \sigma^{-1}(v)\p'=\p v}$.\label{L:hom:units:a}
\item Fix $\varphi\in \Hom_A(V_{\p}, V_{\p'})$. Then $\varphi$ is injective if and only if $\varphi(1)\neq 0$; $\varphi$ is surjective if and only if $\varphi$ is an isomorphism if and only if $\varphi(1)$ is a unit.\label{L:hom:units:b}
\item If $\p\in\cent{R}$ then $\End_A(V_\p)=R^\sigma\cdot \id_{V_\p}$, where $R^\sigma$ acts by right multiplication on $V_\p$.\label{L:hom:units:c}
\end{enumerate}
\end{lemma}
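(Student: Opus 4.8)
The plan is to derive part~(a) from \cref{L:hom:simplification} and then obtain parts~(b) and~(c) as easy consequences. For part~(a), note that $V_\p$ is generated as an $R$-module by the single element $1$, and that $V_{\p'}$, being ${}_RR$ with $R$ a domain, is torsionfree over $R$; hence \cref{L:hom:simplification} applies with $v_1=1$ and gives
\[
\Hom_A(V_\p,V_{\p'})=\bigl\{\varphi\in\Hom_R({}_RR,{}_RR)\mid \varphi(x\cdot 1)=x\cdot\varphi(1)\bigr\}.
\]
Every left $R$-module endomorphism of ${}_RR$ is right multiplication by $\varphi(1)$, so $\Psi$ is injective, and it remains only to see which $v=\varphi(1)\in R$ can occur. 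Since $x\cdot 1=\p$ in $V_\p$ and, by $R$-linearity, $\varphi(\p)=\p v$, while $x\cdot v=\sigma^{-1}(v)\p'$ in $V_{\p'}$, the compatibility condition becomes exactly $\p v=\sigma^{-1}(v)\p'$. Conversely, any $v$ satisfying $\sigma^{-1}(v)\p'=\p v$ yields, via $r\mapsto rv$, an element of $\Hom_A(V_\p,V_{\p'})$ by the same lemma. This proves part~(a).

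For part~(b), write $v=\varphi(1)$, so that $\varphi$ is right multiplication by $v$ on $R$. As $R$ is a domain, this map is injective exactly when $v\neq 0$, and it is the zero map (hence not injective, since $V_\p\neq 0$) when $v=0$. If $\varphi$ is surjective, then $Rv=R$, so $uv=1$ for some $u\in R$; then $(vu-1)v=vuv-v=v-v=0$ with $v\neq 0$ forces $vu=1$, so $v\in R^\times$. Conversely, if $v$ is a unit then $\varphi$ is visibly bijective, and any isomorphism is in particular surjective; this closes the chain of equivalences.

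For part~(c), apply part~(a) with $\p'=\p$. Since $\p$ is central, $\p v=v\p$, so the defining relation $\sigma^{-1}(v)\p=\p v$ reads $(\sigma^{-1}(v)-v)\p=0$; as $\p\neq 0$ (it divides $a\neq 0$) and $R$ is a domain, this is equivalent to $\sigma^{-1}(v)=v$, i.e.\ $v\in R^\sigma$. Thus the elements of $\End_A(V_\p)$ are precisely the right multiplications by elements of $R^\sigma$, which is the meaning of $\End_A(V_\p)=R^\sigma\cdot\id_{V_\p}$; one may further note that $v\mapsto(r\mapsto rv)$ is an anti-isomorphism $R^\sigma\to\End_A(V_\p)$.

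\textbf{Main difficulty.} None of the steps is genuinely hard; the only point requiring a moment's care is the implication ``$\varphi$ surjective $\Rightarrow$ $\varphi(1)\in R^\times$'', where one must invoke that in a domain a one-sided inverse is automatically two-sided, established by the short computation $(vu-1)v=0$ above.
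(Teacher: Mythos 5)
Your proposal is correct and follows essentially the same route as the paper: reduce to \cref{L:hom:simplification} via the single generator $1$, identify $\varphi$ with right multiplication by $\varphi(1)$, read off the condition $\p v=\sigma^{-1}(v)\p'$, and use that $R$ is a domain for injectivity, the two-sidedness of the inverse, and the cancellation of $\p$ in part~(c). The one step you spell out in detail, namely $(vu-1)v=0$ forcing $vu=1$, is exactly what the paper invokes with its ``as before'' remark, so there is no substantive difference.
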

\begin{proof}
As $V_{\p}$ is a free $R$-module generated by $1$, any $\varphi\in \Hom_R(V_{\p}, V_{\p'})$ is uniquely determined by $v=\varphi(1)$. By~\cref{L:hom:simplification}, $\varphi\in \Hom_A(V_{\p}, V_{\p'})$ if and only if $\p v=\varphi(\p)=x\cdot v=\sigma^{-1}(v)\p'$. In this case, $\varphi(r)=rv$, so $\ker\varphi$ is either $V_{\p}$, if $v=0$, or $\pb{0}$, if $v\neq 0$. Similarly, as $\varphi(V_{\p})=Rv$, it follows that $\varphi$ is surjective if and only if $Rv=R$, i.e., there is $u\in R$ with $uv=1$. As before, this implies that $vu=1$. Thus $v$ is a unit and $\varphi$ is an isomorphism.

For the last claim, in case $\p'=\p$ is central, we get $\p\sigma^{-1}(v)=\sigma^{-1}(v)\p=\p v$, which is equivalent to $v\in R^\sigma$.
\end{proof}

\begin{rmk}
Once again, we are lead to the condition that $R^\sigma\cap\cent{R}=\cent{A}\cap R$ should be a field: if there is any $\p\in\cent{R}$ which divides $a$ and such that $V_\p$ is simple, then Schur's lemma, together with \cref{L:hom:units}\cref{L:hom:units:c}, imply that $R^\sigma$ is a division ring, so that $R^\sigma\cap\cent{R}$ is a field.
\end{rmk}

\begin{cor}\label{C:rank1:iso}
Assume that $R$ is commutative and let $\p, \p'$ be divisors of $a$. If $V_\p \simeq V_{\p'}$, then $\p$ and $\p'$ are associates in $R$ (i.e., $\p'=u\p$, for some unit $u\in R$).
\end{cor}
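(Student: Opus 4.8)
The plan is to read off an explicit isomorphism and unwind the one relation it must satisfy, using \cref{L:hom:units}. Suppose $\varphi\colon V_\p\to V_{\p'}$ is an isomorphism of $A$-modules and put $v=\varphi(1)\in R$. By \cref{L:hom:units}\cref{L:hom:units:b}, $v$ is a unit of $R$ (an isomorphism is in particular surjective), and by \cref{L:hom:units}\cref{L:hom:units:a} the element $v$ lies in $\pb{w\in R\mid \sigma^{-1}(w)\p'=\p w}$, i.e.\ $\sigma^{-1}(v)\,\p'=\p\, v$.

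Now I would invoke the commutativity of $R$. Since $\sigma$ is an automorphism, applying $\sigma^{-1}$ to $v v^{-1}=1$ shows that $\sigma^{-1}(v)$ is again a unit; hence $u:=v\,\sigma^{-1}(v)^{-1}$ is a unit of $R$. Rearranging the relation $\sigma^{-1}(v)\p'=\p v$ in the commutative domain $R$ gives $\p'=\p\, v\,\sigma^{-1}(v)^{-1}=u\p$, so $\p$ and $\p'$ are associates, as claimed.

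I do not anticipate any genuine obstacle: the corollary is essentially a formal consequence of the $\Hom$-computation in \cref{L:hom:units}. The only point worth flagging is that commutativity of $R$ is exactly what permits dividing by $\sigma^{-1}(v)$ and concluding that $\p$ and $\p'$ differ by a unit (in the noncommutative setting one is left only with the twisted identity $\sigma^{-1}(v)\p'=\p v$). One could additionally remark, though it is not needed for the statement, that the converse fails in general: for $R=\F[h]$ of classical or quantum type the only units are nonzero scalars, which are $\sigma$-invariant, so $u=v\sigma^{-1}(v)^{-1}=1$ is forced and $V_\p\simeq V_{\p'}$ in fact entails $\p=\p'$ there.
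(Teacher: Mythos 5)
Your argument is correct and is exactly the route the paper intends: the corollary is stated as an immediate consequence of \cref{L:hom:units}, reading off that an isomorphism corresponds to a unit $v$ with $\sigma^{-1}(v)\p'=\p v$ and dividing by the unit $\sigma^{-1}(v)$ in the commutative domain $R$. Your closing remark about the converse likewise matches the paper's own discussion immediately following the corollary, so nothing is missing.
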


Let's address a partial converse of \cref{C:rank1:iso} in the general case. Note first that even if $R$ is commutative and $\p$, $\p'$ are divisors of $a$ that are associates, it may occur that $V_\p \not\simeq V_{\p'}$. For example, if $\sigma$ fixes each unit, then the isomorphism condition becomes
$\p'=(\sigma^{-1}(u))^{-1}u\p=u^{-1}u\p = \p$, so in this case $V_\p \simeq V_{\p'} \iff \p=\p'$.

However, when $u$ is a unit, $V_\p$ and $V_{u\p}$ are related in the sense below.

\begin{lemma}\label{L:unit-twists}
For every unit $u \in \cent{R}$ there is an automorphism $\tau_u: A \ra A$ defined on the generators by $\tau_u(r)=r$, for $r\in R$, $\tau_u(x)=ux$ and $\tau_u(y)=yu^{-1}$. Then $\tau_u$ provides an auto-equivalence on the category of $A$-modules, where each module $M$ is mapped to $M^{\tau_u}$, which is the same abelian group as $M$ but with a $\tau_u$-twisted action 
\[b \bullet m := \tau_u(b) \cdot m, \quad \text{for each $b\in A=R(\sigma,a)$.}\]
Using this notation, when $u$ is a central unit in $R$, we have \[V_{u\p} \simeq V_\p^{\tau_u}.\]
\end{lemma}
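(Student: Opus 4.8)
The plan is to verify the two assertions in order. First I would check that $\tau_u$ is a well-defined algebra automorphism: it suffices to confirm that the defining relations of $A=R(\sigma,a)$ in \cref{E:def:GWA} are preserved by the proposed assignment $\tau_u(r)=r$, $\tau_u(x)=ux$, $\tau_u(y)=yu^{-1}$. Since $u\in\cent{R}$ is a unit, $\sigma(u)$ is also a central unit, and the computations are short: e.g. $\tau_u(x)\tau_u(\sigma(r)) = ux\sigma(r) = urx = r\,ux = \tau_u(r)\tau_u(x)$ (using centrality of $u$), and $\tau_u(x)\tau_u(y) = ux\,yu^{-1} = u\,a\,u^{-1} = a = \tau_u(a)$ since $a$ is central. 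The relation $yr=\sigma(r)y$ and $yx=\sigma(a)$ are handled symmetrically. An inverse is given by $\tau_{u^{-1}}$ (or directly $\tau_u^{-1}(x) = u^{-1}x$, $\tau_u^{-1}(y)=yu$), so $\tau_u\in\aut A$. The twisting construction $M\mapsto M^{\tau_u}$ is then the standard pullback of module structures along an automorphism, which is automatically an auto-equivalence of $A\text{-}\mathsf{Mod}$ with inverse given by twisting along $\tau_u^{-1}$; this part is entirely formal and I would state it without belaboring it.

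The substantive point is the isomorphism $V_{u\p}\simeq V_\p^{\tau_u}$. Both modules have underlying abelian group $R$, so I would look for an $R$-linear bijection $R\to R$, necessarily of the form $r\mapsto rv$ for some $v\in R$ (using that the source is $R$-free of rank $1$ on the generator $1$, and left multiplication), and pin down $v$ by the criterion of \cref{L:hom:simplification}: it is enough that the map intertwines the action of $x$. In $V_{u\p}$ we have $x\cdot r = \sigma^{-1}(r)(u\p)$. In $V_\p^{\tau_u}$ the action of $x$ is $x\bullet r = \tau_u(x)\cdot r = (ux)\cdot r = u(x\cdot r) = u\,\sigma^{-1}(r)\p$ (the action on the right being that of $V_\p$, and $u$ acting as the ring element $u\in R$). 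So writing $\varphi(r)=rv$, I need $\varphi(x\cdot_{V_{u\p}} 1) = x\bullet\varphi(1)$, i.e. $\varphi(u\p) = u\p v$ versus $u\,\varphi(1) \cdot$ — more carefully, $\varphi(x\cdot 1) = \varphi(u\p) = u\p v$ and $x\bullet\varphi(1) = x\bullet v = u\,\sigma^{-1}(v)\p$. Thus the condition is $u\p v = u\sigma^{-1}(v)\p$, equivalently (cancelling the central unit $u$ and using that $\p$ is central when $R$ is commutative — but here $R$ need not be commutative, so I'd instead note $\p$ need not be central) $\p v = \sigma^{-1}(v)\p$. Taking $v=1$ works precisely when $\sigma$ fixes... no: $v=1$ gives $\p = \p$, which always holds. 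So $\varphi = \id$ (as a map $R\to R$) already does it, provided we've matched the $x$-actions correctly, and then \cref{L:hom:simplification} upgrades this to an $A$-module map; \cref{L:hom:units}\cref{L:hom:units:b} (or the elementary observation that $\varphi(1)=1$ is a unit) shows it is an isomorphism.

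**Anticipated obstacle.** The only place requiring care is bookkeeping with the twisted action: one must be scrupulous about where $\tau_u$ inserts the factor $u$ and where "acting by $u$" means acting by the ring element $u$ versus by the generator — and in the noncommutative case one must track which side $\p$, $\q$, and $u$ sit on, since $u$ is central but $\p$ may not be. I expect that once the $x$-action is matched, the $y$-action matches automatically by the torsionfree argument in \cref{L:hom:simplification}, so I would not compute it separately. Concretely, I would write: "Let $\varphi\colon V_{u\p}\to V_\p^{\tau_u}$ be the identity map on the underlying set $R$. It is clearly $R$-linear and bijective with $\varphi(1)=1$. By \cref{L:hom:simplification} it suffices to check $\varphi(x\cdot 1) = x\bullet\varphi(1)$; we compute $\varphi(x\cdot 1)=\varphi(u\p)=u\p$ in $V_\p^{\tau_u}$, while $x\bullet 1 = \tau_u(x)\cdot 1 = ux\cdot 1 = u\p$ as well (the action on the right being in $V_\p$). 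Hence $\varphi$ is an $A$-isomorphism."
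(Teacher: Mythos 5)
Your proposal is correct and follows essentially the same route as the paper: take the identity map on the underlying set $R$, invoke \cref{L:hom:simplification} to reduce to matching the $x$-action on the generator $1$, and observe that both sides equal $u\p$. The only difference is that you also spell out the verification that $\tau_u$ respects the defining relations \cref{E:def:GWA}, which the paper treats as immediate.
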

\begin{proof}
As $\eval{\tau_u}{R}=\id_R$, the identity map is an $R$-module isomorphism from $V_{u\p}$ to $V_\p^{\tau_u}$. By \cref{L:hom:simplification}, it remains to check that $\id(x\cdot 1)=x\bullet\id(1)$, which reduces to $u\p=\tau_u(x)\cdot 1=(ux)\cdot 1=u\p$.
\end{proof}

In particular, we note that if $R$ is a unique factorization domain (UFD), up to unit twists $\tau_u$, there are just finitely many $R$-free modules of rank $1$, and they are parametrized by the divisors of $a$ (up to units).

\subsection{Whittaker modules as $R$-free modules}\label{SS:whittaker}

In case $R$ is an $\F$-algebra and $\zeta\in\F^*$, then the universal Whittaker module of type $\zeta$  constructed in~\cite{BO09} for the GWA $A$ is just the $R$-free module $V_{\zeta^{-1}a}$, with $\p=\zeta^{-1}a$ and $\q=\zeta$, showing how $R$-free modules generalize Whittaker modules.
Moreover, we see that as a particular case of \cref{L:subm-Vp-units} we obtain the characterization in \cite[Lemma 3.9]{BO09} of all submodules of universal Whittaker modules (implying the isomorphism and simplicity criteria for Whittaker pairs, see~\cite[Theorem 3.12, Corollary 3.13]{BO09}). We remark however that in~\cite{BO09} the authors are working in the more general setting of iterated GWAs.

\section{The case where $R$ is a UFD}\label{S:UFD}

In this section we assume that $R$ is a commutative unique factorization domain (UFD) and we continue to assume that $A=R(\sigma, a)$ with $a\neq 0$. Unsurprisingly, our sharpest results are in fact obtained when $R$ is a PID.

The dynamics of the action of the automorphism $\sigma$ on the set of units, irreducible elements and, more generally, on ideals of $R$ will be of utmost relevance to study the structure of the $R$-free modules, so we fix the notation $\mathcal G=\langle \sigma \rangle$ for the subgroup of $\aut{R}$ generated by $\sigma$, the defining automorphism of $A$.

\subsection{Irreducibles, associates and $\mathcal G$-orbits}

We write $r\sim s$ if $r$ and $s$ are associates in $R$ and $[r]$ for the equivalence class of $r$ with respect to this relation. So $[0]=\{0\}$ and $[1]=R^\times$, the group of units of $R$. Then $R/{\sim}$ parametrizes the set of principal ideals in $R$. Similarly, let $\Irr(R)=\{[r]\in R /{\sim} \mid  \text{$r$ is irreducible}\}$ be the set of irreducibles in $R$, up to associates. Then, in case $R$ is further assumed to be noetherian, $\Irr(R)$ parametrizes the set of prime ideals of height $1$ in $R$, by Kaplansky's theorem.

For an element $r\in R$, we write $\Irr (r)$ for the multiset of equivalence classes of irreducible factors of $r$. In other words, 
if $r=ur_1r_2 \cdots r_n$ with $r_i$ irreducible and $u$ a unit, then $\Irr (r)=\{[r_1], \ldots, [r_n]\}$, where the 
$[r_i]$ are not necessarily distinct. We write $\degg(r)=|\Irr(r)|$ for the number of such factors, counting multiplicities.

Since the action of $\mathcal G$ on $R$ preserves associates and irreducibility, it follows that $\mathcal G$ acts on $R /{\sim}$ and on $\Irr(R)$. We use $\orb$ to denote the orbits in either one of these contexts. In particular,
\[\orb([r]) = \{[\sigma^{k}(r)] \mid  k\in \bb{Z}\}.\] 
To avoid notational complexity, we will often just write $\orb(r)$ in place of $\orb([r])$ whenever it is clear that we're working modulo associates. The space of $\mathcal G$-orbits in $R /{\sim}$ can be identified with $R/{\sim}_\sigma$, where 
\[r \sim_{\sigma} s \iff \sigma^k(r)\sim s, \text{ for some $k\in\bb{Z}$} \iff \orb([r]) =\orb([s]).\] 

\subsection{Cyclic submodules of $V_\p$}

We note that, since $V_{\p} = {}_RR$ as an $R$-module, any submodule of $V_{\p}$ is an ideal of $R$. The next result uses only the fact that $R$ is a commutative domain.

\begin{lemma}\label{L:radI-subm}
Let $\p$ be a divisor of $a$. 
\begin{enumerate}[label=(\alph*)]
\item \label{L:radI-subm:c} The module $V_{\p}$ is indecomposable.
\item \label{L:radI-subm:d} If $V_{\p}$ has finite length then it has a unique simple submodule; in other words, its socle $\soc(V_\p)$ is simple.
\item \label{L:radI-subm:b} If $I$ is a proper $A$-submodule of $V_\p$, then so is
\begin{align*}
\sqrt{I}=\{r\in R\mid r^n\in I, \text{ for some $n\geq 1$}\}. 
\end{align*}
\end{enumerate}
\end{lemma}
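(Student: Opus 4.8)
The plan is to treat the three parts in the order (c), (d), (b), since (d) can be bootstrapped from (c), and (b) is essentially independent but uses the same basic description of submodules as ideals of $R$.

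For part (c), that $V_\p$ is indecomposable: I would argue by contradiction. Suppose $V_\p = I \oplus J$ as $A$-modules with $I,J$ nonzero. Since $V_\p = {}_RR$ as an $R$-module, $I$ and $J$ are nonzero ideals of the commutative domain $R$. But in an integral domain any two nonzero ideals intersect nontrivially: if $0\ne i\in I$ and $0\ne j\in J$ then $0\ne ij\in I\cap J$. This contradicts $I\cap J = 0$, so no such decomposition exists. (Equivalently, one can note that $\End_A(V_\p)$ has no nontrivial idempotents, but the direct ideal-theoretic argument is cleanest.)

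For part (d), that $\soc(V_\p)$ is simple when $V_\p$ has finite length: a module of finite length always has a nonzero socle, and the socle is a finite direct sum of simple submodules. If $\soc(V_\p)$ were a direct sum of two or more simples, it would be a decomposable submodule $S_1\oplus S_2$ with $S_1,S_2$ nonzero. But $S_1,S_2$ are then nonzero ideals of $R$, and by the same domain argument as in part (c), $S_1\cap S_2\ne 0$, contradicting directness. Hence $\soc(V_\p)$ is simple, i.e.\ $V_\p$ has a unique simple submodule. (One should remark that finite length is used only to guarantee the socle is nonzero and semisimple; the uniqueness is really the indecomposability phenomenon again.)

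For part (b), that $\sqrt I$ is a proper $A$-submodule whenever $I$ is: first, since $R$ is commutative, $\sqrt I$ (the radical of the ideal $I$) is an ideal of $R$, hence stable under the $R$-action, and it is proper because $I$ is proper and $R$ is a domain with $1\notin\sqrt I$ (if $1\in\sqrt I$ then $1 = 1^n\in I$). It remains to check stability under $x$ and $y$. For $r\in\sqrt I$, say $r^n\in I$, I want to show $x\cdot r = \sigma^{-1}(r)\p \in\sqrt I$ and $y\cdot r = \sigma(r)\q\in\sqrt I$. The key computation: $(\sigma^{-1}(r)\p)^n = \sigma^{-1}(r)^n\p^n = \sigma^{-1}(r^n)\p^n$ (using commutativity), and I claim this lies in $I$. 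Indeed $\sigma^{-1}(r^n)\p^n = \p^{n-1}\cdot(\sigma^{-1}(r^n)\p) = \p^{n-1}\cdot(x\cdot r^n)$, and $x\cdot r^n\in I$ since $r^n\in I$ and $I$ is an $A$-submodule; then multiplying by $\p^{n-1}\in R$ keeps us in $I$. Hence $(x\cdot r)^n\in I$, so $x\cdot r\in\sqrt I$; the argument for $y$ is identical with $\q$ and $\sigma$ in place of $\p$ and $\sigma^{-1}$. Therefore $\sqrt I$ is an $A$-submodule, and it is proper.

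The main obstacle is the very mild one in part (b): making sure the radical of $I$ is genuinely $A$-stable rather than merely $R$-stable — the trick is to raise $x\cdot r$ to the $n$-th power, pull out all but one factor of $\p$ as a scalar from $R$, and recognize the remaining factor as $x$ acting on $r^n\in I$. Everything else reduces to the single elementary fact that nonzero ideals in an integral domain cannot be disjoint.
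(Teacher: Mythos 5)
Your proposal is correct and follows essentially the same route as the paper: parts (c) and (d) both reduce to the observation that two nonzero ideals of the domain $R$ cannot intersect trivially (so $V_\p$ admits no nontrivial direct sum of $R$-submodules, and the socle, being nonzero and semisimple in the finite-length case, must be simple), and part (b) uses exactly the paper's computation $(x\cdot r)^n=(x\cdot r^n)\p^{n-1}\in I$ together with the symmetric argument for $y$. No gaps.
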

\begin{proof}
Let $I$ and $J$ be $R$-submodules of $V_\p$ (thus ideals of $R$) and suppose that $I\cap J=0$. Then $IJ\subseteq I\cap J=0$, so either $I=0$ of $J=0$, because $R$ is a domain. It follows that $V_\p$ contains no nontrivial direct sums of $R$-submodules. Then \cref{L:radI-subm:c} follows because $V_\p$ is already indecomposable as an $R$-module. Similarly, since by hypothesis $\soc(V_\p)\neq 0$ and $\soc(V_\p)$ is a direct sum of simple $A$-submodules of $V_\p$, the previous observations show that $\soc(V_\p)$ must be simple, hence the unique simple $A$-submodule of $V_\p$, establishing \cref{L:radI-subm:d}.

As $\sqrt{I}$ is a proper ideal of $R$, it remains to show that it is closed under the actions of $x$ and $y$. Suppose that $r\in\sqrt{I}$, say $r^n\in I$, for some $n\geq 1$. Then 
\begin{align*}
(x\cdot r)^n=(\sigma^{-1}(r))^n\p^n=((\sigma^{-1}(r^n))\p)\p^{n-1}=(x\cdot r^n)\p^{n-1}\in I, 
\end{align*}
so $x\cdot r\in \sqrt{I}$. Similarly, $y\cdot r\in \sqrt{I}$ and $\sqrt{I}$ is an $A$-submodule of $V_\p$.
\end{proof}

\begin{prop}
\label{lemma:sub_condition}
Suppose that $R$ is a UFD and let $\p$ be a divisor of $a$. Recall that $\q=\sigma(a/\p)$.
Then the principal $R$-ideal $\langle g \rangle$ is a submodule of $V_\p$ if and only if
\begin{align}\label{E:irrg-p}
\Irr (g) \subseteq \sigma^{-1}(\Irr (g)) \cup \Irr(\p) 
\end{align}
and
\begin{align}\label{E:irrg-q}
\Irr (g) \subseteq \sigma(\Irr (g)) \cup \Irr(\q), 
\end{align}
where union and inclusion are operations on multisets, taking multiplicity into account. In such a case, $\langle g \rangle\simeq V_{\p'}$, where $\p'=\frac{\sigma^{-1}(g)\p}{g}$ and $\q'=\q_{\p'}=\frac{\sigma(g)\q}{g}$.
\end{prop}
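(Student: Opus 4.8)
The plan is to characterize when a principal ideal $\langle g\rangle$ is closed under the actions of $x$ and $y$, and then to identify the resulting submodule. Since $V_\p={}_RR$ as an $R$-module, $\langle g\rangle$ is automatically an $R$-submodule, so by \cref{L:hom:simplification} (or directly from the defining action in \cref{D:Vp}) it is an $A$-submodule if and only if $x\cdot\langle g\rangle\subseteq\langle g\rangle$ and $y\cdot\langle g\rangle\subseteq\langle g\rangle$. Because the actions of $x$ and $y$ are given by multiplication by $\p$, resp.\ $\q$, composed with $\sigma^{\mp1}$, and $\sigma$ is a ring automorphism, we have $x\cdot\langle g\rangle=\sigma^{-1}(\langle g\rangle)\p=\langle\sigma^{-1}(g)\p\rangle$ and $y\cdot\langle g\rangle=\langle\sigma(g)\q\rangle$. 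Thus $\langle g\rangle$ is an $A$-submodule precisely when $g\mid\sigma^{-1}(g)\p$ and $g\mid\sigma(g)\q$ in $R$.

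Next I would translate these two divisibility conditions into the multiset conditions~\cref{E:irrg-p} and~\cref{E:irrg-q}. Since $R$ is a UFD, for any nonzero $r,s$ we have $r\mid s$ if and only if $\Irr(r)\subseteq\Irr(s)$ as multisets; and $\Irr$ is multiplicative, so $\Irr(\sigma^{-1}(g)\p)=\sigma^{-1}(\Irr(g))\cup\Irr(\p)$ (using that $\sigma$ acts on $\Irr(R)$, as recorded in the discussion of $\mathcal G$-orbits). Hence $g\mid\sigma^{-1}(g)\p\iff\Irr(g)\subseteq\sigma^{-1}(\Irr(g))\cup\Irr(\p)$, which is~\cref{E:irrg-p}, and symmetrically $g\mid\sigma(g)\q\iff$~\cref{E:irrg-q}. (The degenerate case $g=0$ gives $\langle g\rangle=0$, which is always a submodule and is covered since $\Irr(0)$ may be taken to be the whole multiset or handled trivially — I would dispose of it in one line, or simply assume $g\neq0$, the interesting case.)

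Finally, assuming $\langle g\rangle$ is an $A$-submodule, I would identify it with some $V_{\p'}$. The map $R\to\langle g\rangle$, $r\mapsto rg$, is an $R$-module isomorphism ${}_RR\xrightarrow{\sim}\langle g\rangle$. Transporting the $A$-action along this isomorphism, the element $x$ acts on the generator $1\in{}_RR$ by the element $r'\in R$ such that $r'g=x\cdot g=\sigma^{-1}(g)\p$, i.e.\ $r'=\sigma^{-1}(g)\p/g$ (a genuine element of $R$ by the submodule condition, and well defined since $R$ is a domain). By the discussion preceding \cref{D:Vp} characterizing rank-$1$ modules by the element $\p'=x\cdot1$, this shows $\langle g\rangle\simeq V_{\p'}$ with $\p'=\sigma^{-1}(g)\p/g$; one then checks $\p'\mid a$ and computes $\q'=\sigma(a/\p')=\sigma(g)\q/g$ directly, e.g.\ from $a=\sigma^{-1}(\q)\p$ so that $a/\p'=\sigma^{-1}(\q)\p\cdot g/(\sigma^{-1}(g)\p)=\sigma^{-1}(g^{-1}g)\cdots$ — more cleanly, $\p'\q'=\sigma^{-1}(g)\p\sigma(g)\q/g^2$ and a short manipulation using $\p\q=\sigma^{-1}(\q)\p\cdot\sigma(\p)/\p=\sigma(a)$... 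I would instead just verify $\sigma^{-1}(\q')\p'=\sigma^{-1}(\sigma(g)\q/g)\cdot\sigma^{-1}(g)\p/g=(g\sigma^{-1}(\q)/\sigma^{-1}(g))(\sigma^{-1}(g)\p/g)=\sigma^{-1}(\q)\p=a$, confirming $\q'=\q_{\p'}$.

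The only mildly delicate point is the passage between divisibility in $R$ and containment of $\Irr$-multisets together with the multiplicativity $\Irr(rs)=\Irr(r)\cup\Irr(s)$ and the compatibility $\Irr(\sigma(r))=\sigma(\Irr(r))$; everything else is routine. I expect the main obstacle — such as it is — to be purely bookkeeping: keeping the $\sigma$ and $\sigma^{-1}$ on the correct sides and making sure the multiset operations are read with multiplicity, as fixed in the Conventions.
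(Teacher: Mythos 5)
Your proposal is correct and follows essentially the same route as the paper: reduce submodule stability to the divisibilities $g\mid\sigma^{-1}(g)\p$ and $g\mid\sigma(g)\q$, translate these into the multiset conditions using unique factorization, and identify $\langle g\rangle\simeq V_{\p'}$ by transporting the action along $r\mapsto rg$ (the paper phrases this last step via \cref{P:class-rk-1-general}, which is the same content). Your closing verification that $\sigma^{-1}(\q')\p'=a$ is a slightly more explicit check of $\q'=\q_{\p'}$ than the paper gives, but it is not a different argument.
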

\begin{proof}
Let $I=\langle g\rangle=Rg$. 
Then $I$ is an $A$-submodule of $V_\p$ if and only if $x \cdot s \in I$ and $y\cdot s \in I$ for all $s \in I$. Since $I$ is principal we have $s=rg$, and using the definition of the $V_\p$-action these conditions become
\[\sigma^{-1}(r)\sigma^{-1}(g)\p \in I \quad\text{ and }\quad \sigma(r)\sigma(g)\q\in I.\]
Clearly, this holds for all $r\in R$ if and only if it holds for $r=1$, so we get 
\[g|\sigma^{-1}(g)\p \text{ and } g|\sigma(g)\q.\]
The condition on the left says that every irreducible factor of $g$ should be either a factor of $\sigma^{-1}(g)$ or of $\p$, counting multiplicities, while the condition on the right says that every irreducible factor of $g$ is a factor of either $\sigma(g)$ or of $\q$.

Now suppose that $\langle g \rangle$ is a submodule of $V_\p$. As $R$ is a domain, $\langle g \rangle\simeq {}_RR$ as left $R$-modules so, by \cref{P:class-rk-1-general}, $\langle g \rangle\simeq V_{\p'}$ for some $\p'$ which divides $a$. By~\cref{E:irrg-p}, $\sigma^{-1}(g)\p g^{-1}\in R$ and
\begin{align*}
x\cdot g=\sigma^{-1}(g)\p=(\sigma^{-1}(g)\p g^{-1})g.
\end{align*}
So we conclude that $V_{\p'} \simeq \langle g \rangle \subseteq V_\p$ where $\p'=\frac{\sigma^{-1}(g)\p}{g}$, as claimed, and $\q'=\sigma(a/\p')=\frac{\sigma(g)\q}{g}\in R$, by~\cref{E:irrg-q}. 
\end{proof}



Our next result shows that we can analyze each $\mathcal G$-orbit separately in testing for submodules of $V_\p$.

\begin{lemma}
\label{lemma:orbit_partition}
Suppose that $R$ is a UFD and let $\p$ be a divisor of $a$.
Let $0\neq g\in R\setminus R^\times$. Factor $g$ into irreducibles and group the factors according to their $\mathcal G$-orbits; in other words, write \[g=g_1g_2\cdots g_n\] such that, for $x,y \in \Irr(g)$, we have 
\[x \sim_{\sigma} y \Leftrightarrow x,y\in \Irr(g_i), \text{ for a unique $i$}.\]

Then $\langle g\rangle$ is a submodule of $V_\p$ if and only if $\langle g_i\rangle$ is a submodule of $V_\p$ for each $1 \leq i \leq n$. In this case we have $\langle g\rangle=\cap_{i=1}^n\langle g_i\rangle$. 
\end{lemma}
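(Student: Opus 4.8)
The plan is to reduce everything to the divisibility conditions \cref{E:irrg-p}--\cref{E:irrg-q} from \cref{lemma:sub_condition} and observe that these conditions decouple across $\mathcal G$-orbits. The key structural fact is that $\sigma$ permutes the $\mathcal G$-orbits of $\Irr(R)$ trivially: each irreducible factor of $\sigma^{\pm 1}(g_i)$ lies in the same $\mathcal G$-orbit as the factors of $g_i$, so $\Irr(\sigma^{\pm1}(g_i))$ is ``supported'' on exactly the set of orbits hit by $\Irr(g_i)$, and these supports are pairwise disjoint for distinct $i$ by construction. Thus when we write the condition $\Irr(g)\subseteq\sigma^{-1}(\Irr(g))\cup\Irr(\p)$ as a statement about multisets and then restrict attention to the irreducibles lying in a single orbit $\mathcal O$, only the block $g_i$ whose factors lie in $\mathcal O$ contributes to the left-hand side and to the $\sigma^{-1}(\Irr(g))$ part of the right-hand side; the contribution of $\p$ restricted to $\mathcal O$ is what it is regardless. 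Hence $\Irr(g)\subseteq\sigma^{-1}(\Irr(g))\cup\Irr(\p)$ holds if and only if $\Irr(g_i)\subseteq\sigma^{-1}(\Irr(g_i))\cup\Irr(\p)$ holds for every $i$, and symmetrically for \cref{E:irrg-q}.

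Concretely, the steps I would carry out are: (1) By \cref{lemma:sub_condition}, $\langle g\rangle$ is a submodule of $V_\p$ iff $g\mid\sigma^{-1}(g)\p$ and $g\mid\sigma(g)\q$; likewise for each $g_i$. (2) Fix a $\mathcal G$-orbit $\mathcal O$ in $\Irr(R)$ and, for $h\in R$, write $\Irr(h)|_{\mathcal O}$ for the sub-multiset of $\Irr(h)$ consisting of those $[r]$ with $\orb([r])=\mathcal O$. Since $\sigma$ maps $\Irr(R)$ to itself preserving orbits, $\Irr(\sigma^{\pm1}(h))|_{\mathcal O}=\sigma^{\pm1}\bigl(\Irr(h)|_{\mathcal O}\bigr)$. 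By the defining grouping, for the unique index $i=i(\mathcal O)$ with $\Irr(g_i)\subseteq\mathcal O$ we have $\Irr(g)|_{\mathcal O}=\Irr(g_i)$ and $\Irr(g_j)|_{\mathcal O}=\emptyset$ for $j\ne i$ (and $\Irr(g)|_{\mathcal O}=\emptyset$ if $\mathcal O$ is not hit at all). (3) A multiset inclusion $M\subseteq N$ holds iff $M|_{\mathcal O}\subseteq N|_{\mathcal O}$ for every orbit $\mathcal O$; applying this to \cref{E:irrg-p} and using step (2) gives that \cref{E:irrg-p} for $g$ is equivalent to \cref{E:irrg-p} for every $g_i$, and the same for \cref{E:irrg-q}. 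This proves the equivalence. (4) For the last assertion, since the $g_i$ are pairwise coprime in the UFD $R$, their product generates the intersection of the principal ideals: $\langle g_1\cdots g_n\rangle=\bigcap_{i=1}^n\langle g_i\rangle$, which is the standard fact that $\mathrm{lcm}$ of pairwise coprime elements is their product.

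I do not expect a genuine obstacle here; the only point requiring a little care is bookkeeping with multisets — making sure the restriction-to-an-orbit operation commutes with $\sigma^{\pm1}$ and with unions, and that "hit by exactly one block" is used correctly so that the cross terms vanish. One should also note at the outset that the hypothesis $g\notin R^\times\cup\{0\}$ guarantees the factorization $g=g_1\cdots g_n$ with $n\ge 1$ makes sense (if $g$ is a unit the statement is vacuous/trivial since $\langle g\rangle=R=V_\p$), and that each $g_i$ is itself a non-unit, so \cref{lemma:sub_condition} applies to each $g_i$ as stated. With these remarks the argument is essentially a translation of the submodule criterion into orbit-local divisibility conditions, which visibly separate.
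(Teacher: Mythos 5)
Your proposal is correct and follows essentially the same route as the paper: both reduce to the multiset divisibility criterion of \cref{lemma:sub_condition} and observe that the conditions decouple orbit-by-orbit (the paper does this by partitioning $\p$ and $\q$ into orbit blocks $\p_i,\q_i$, which is the same bookkeeping as your restriction $\Irr(\cdot)|_{\mathcal O}$), and both deduce the intersection statement from pairwise coprimality of the $g_i$.
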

\begin{proof}
Partition the factors $\p$ and $\q$ according to the same $\mathcal G$-orbits as $g_1, \ldots g_n$:
\[\p=\p_0\p_1\p_2\cdots \p_n \text{ and } \q=\q_0\q_1\q_2 \ldots \q_n,\]
where the additional factors $\p_0$ and $\q_0$ are the product of the irreducibles that do not belong to any of the orbits of the factors of $g$.

\cref{lemma:sub_condition} shows that $\langle g \rangle$ is a submodule of $V_\p$ if and only if
\[\Irr (g) \subseteq \sigma^{-1}(\Irr (g)) \cup \Irr(\p) \quad\text{ and } \quad\Irr (g) \subseteq \sigma(\Irr (g)) \cup \Irr(\q).\]
But considering the $\mathcal G$-orbit of an irreducible $z$, we see that if $z \in \Irr(g_i)$, then $z\in \sigma^{-1}(\Irr (g))$ if and only if $z\in \sigma^{-1}(\Irr (g_i))$, while $z \in \Irr(\p)$ if and only if $z \in \Irr(\p_i)$. Together with the analogous argument for \cref{E:irrg-q}, we conclude that $\langle g \rangle$ is a submodule of $V_\p$ if and only if, for each $1\leq i \leq n$, we have
\begin{align}\label{E:lemma:orbit_partition}
\Irr (g_i) \subseteq \sigma^{-1}(\Irr (g_i)) \cup \Irr(\p_i) \quad\text{ and } \quad\Irr (g_i) \subseteq \sigma(\Irr (g_i)) \cup \Irr(\q_i). 
\end{align}
As $\Irr(\p_i)\subseteq \Irr(\p)$ and $\Irr(\q_i)\subseteq \Irr(\q)$, we conclude that~\cref{E:lemma:orbit_partition} is the condition for $\langle g_i \rangle$ to be a submodule of $V_\p$. 

The intersection property $\langle g\rangle=\cap_{i=1}^n\langle g_i\rangle$ follows because the $g_i$ are pairwise coprime, since their irreducible factors belong to different $\mathcal G$-orbits.
\end{proof}


Let $[z] \in \Irr(R)$ and fix the $\mathcal G$-orbit $\orb([z])$. For any $0\neq f\in R$, write $f=f_0f_{z}$, where $\Irr(f_{z})\subseteq \orb([z])$ and $\Irr(f_0)\cap \orb([z])=\varnothing$ (of course, $f_0$ and $f_{z}$ are only determined up to units). Thus, up to units, the irreducible factors of $f_z$ all have form $\sigma^{k}(z)$, with $k\in\bb Z$. Finally, we define a function $\olfz:\bb{Z} \ra \bb{Z}$ that keeps track of the multiplicities of the irreducible factors of $f_{z}$, such that $\olfz(k)$ is the maximum of all $n\geq 0$ such that $(\sigma^k(z))^n$ divides $f_{z}$. Note that $\olfz=\olfz[f_z]$. 

To avoid cumbersome notation, whenever $[z]\in \Irr(R)$ is fixed and understood from the context, we use just $\ol{f}$ in place of $\olfz$.

By \cref{lemma:orbit_partition}, when studying submodules of $V_\p$ of the form $\langle g\rangle$, we can focus on a single $\mathcal G$-orbit. So suppose that $0\neq g\sim g_z$.
If $\orb([z])$ is infinite, then $\ol{g}$ has finite support (i.e., there are only finitely many $k\in\bb Z$ such that $\ol{g}(k)\neq 0$) and
\[g \sim \prod_{k\in \bb{Z}}\big(\sigma^{k}(z)\big)^{\ol{g}(k)}.\]
What is more, the assignment $g \mapsto \ol{g}$ is a bijection between elements $0\neq g$ such that $g\sim g_z$, and nonnegative functions $\bb{Z} \ra \bb{Z}$ with finite support.

On the other hand, if $|\orb([z])|=n$ is finite, we have
\[g \sim \prod_{k=0}^{n-1}\big(\sigma^{k}(z)\big)^{\ol{g}(k)}\]
and then the assignment $g \mapsto \ol{g}$ is a bijection between elements $0\neq g$ such that $g\sim g_z$, and nonnegative $n$-periodic functions $\bb{Z} \ra \bb{Z}$, which we can think of as functions $\bb{Z}_n \ra \bb{Z}$.

Here, $\ol{\sigma(g)}(k)=\ol{g}(k-1)$ so applying $\sigma$ to $g$ corresponds to translating the graph of $\ol{g}$ one step to the right; similarly $\ol{\sigma^{-1}(g)}(k)=\ol{g}(k+1)$. 

We now derive a condition for $\langle g \rangle$ to be a submodule of $V_\p$ in terms of the functions $\ol{g}$, $\ol{\p}$ and $\ol{\q}$ and the finite difference operators $\Delta, \nabla : \bb{Z}^{\bb{Z}} \ra \bb{Z}^{\bb{Z}}$ given by
\begin{align}\label{E:finite-diff-opers}
(\Delta f)(k) = f(k+1)-f(k) \quad \text{ and } \quad (\nabla f)(k) = f(k)-f(k-1). 
\end{align}

\begin{lemma}
\label{lemma:sub_condition_new}
Fix $[z] \in \Irr(R)$ and assume that $0\neq g\sim g_z$. Then $\langle g \rangle$ is a submodule of $V_\p$ if and only if
\[\ol{\p} \geq -\Delta \ol{g}  \quad \text{ and } \quad \ol{\q} \geq \nabla \ol{g}.\]
\end{lemma}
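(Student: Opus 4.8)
The plan is to reduce to the divisibility form of the criterion in \cref{lemma:sub_condition} and then read it off one valuation at a time. By \cref{lemma:sub_condition} (equivalently, by the divisibilities $g\mid\sigma^{-1}(g)\,\p$ and $g\mid\sigma(g)\,\q$ extracted in its proof), $\langle g\rangle$ is a submodule of $V_\p$ precisely when $g\mid\sigma^{-1}(g)\,\p$ and $g\mid\sigma(g)\,\q$. Since $R$ is a UFD, each divisibility can be tested irreducible class by irreducible class: for $[\pi]\in\Irr(R)$ let $v_{[\pi]}\colon R\setminus\{0\}\to\bb Z_{\ge0}$ be the associated valuation, so that $g\mid\sigma^{-1}(g)\,\p$ holds if and only if $v_{[\pi]}(g)\le v_{[\pi]}(\sigma^{-1}(g))+v_{[\pi]}(\p)$ for every $[\pi]$, and similarly for the second divisibility.

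Next I would observe that, because every irreducible factor of $g$ — hence of $\sigma^{\pm1}(g)$ — lies in $\orb([z])$, the inequality above is automatic for $[\pi]\notin\orb([z])$, so it is enough to test it on the classes $[\sigma^{k}(z)]$, $k\in\bb Z$. On those classes the valuations are exactly the values of the multiplicity functions: $v_{[\sigma^k(z)]}(g)=\ol{g}(k)$, $v_{[\sigma^k(z)]}(\p)=\ol{\p}(k)$, $v_{[\sigma^k(z)]}(\q)=\ol{\q}(k)$, and, applying $\sigma^{\mp1}$ and using the translation identities $\ol{\sigma^{-1}(g)}(k)=\ol{g}(k+1)$, $\ol{\sigma(g)}(k)=\ol{g}(k-1)$ recorded before the lemma, $v_{[\sigma^k(z)]}(\sigma^{-1}(g))=\ol{g}(k+1)$ and $v_{[\sigma^k(z)]}(\sigma(g))=\ol{g}(k-1)$. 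Substituting, the first divisibility becomes $\ol{g}(k)\le\ol{g}(k+1)+\ol{\p}(k)$ for all $k$, i.e. $\ol{\p}(k)\ge\ol{g}(k)-\ol{g}(k+1)=-(\Delta\ol{g})(k)$; the second becomes $\ol{g}(k)\le\ol{g}(k-1)+\ol{\q}(k)$, i.e. $\ol{\q}(k)\ge\ol{g}(k)-\ol{g}(k-1)=(\nabla\ol{g})(k)$. These are precisely the asserted inequalities.

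The only real bookkeeping point is the dichotomy between $\orb([z])$ infinite and finite. In the infinite case the functions $\ol{g},\ol{\p},\ol{\q}$ have finite support and the classes $[\sigma^k(z)]$ are indexed faithfully by $k\in\bb Z$, so the per-valuation inequalities are exactly inequalities of functions $\bb Z\to\bb Z$. In the finite case, of size $n$, the classes $[\sigma^0(z)],\dots,[\sigma^{n-1}(z)]$ are distinct, the multiplicity functions are $n$-periodic, and the translation identities hold modulo $n$; testing the valuation inequalities on these $n$ classes yields the same pair of inequalities, now read as inequalities of functions on $\bb Z_n$. I expect this case split — together with keeping straight that the shifts of $\ol{g}$ correspond to applying $\sigma^{\pm1}$ to $g$ — to be the only mildly delicate part; the rest is a direct unravelling of the definitions of $\ol{g}$, $\ol{\p}$, $\ol{\q}$, $\Delta$ and $\nabla$.
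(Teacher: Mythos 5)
Your proposal is correct and follows essentially the same route as the paper: it reduces to the divisibility/multiset criterion of Lemma \ref{lemma:sub_condition} and reads it off multiplicity-by-multiplicity on the orbit of $[z]$, using the translation identities for $\ol{\sigma^{\pm1}(g)}$ to obtain the $\Delta$ and $\nabla$ inequalities. The valuation phrasing and the explicit finite/infinite orbit bookkeeping are just a restatement of what the paper does with the multiset inclusions in (\ref{E:lemma:orbit_partition}).
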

\begin{proof}
In the present context, the first condition in~\cref{E:lemma:orbit_partition} for $\langle g \rangle$ to be a submodule of $V_\p$ says that, for each irreducible factor $\sigma^k(z)$ of $g$ (counting multiplicities), either $\sigma^k(z) \in \Irr(\sigma^{-1}(g))$ or $\sigma^k(z) \in \Irr(\p_z)$. Since $\ol{g}(k)$ is the multiplicity of $\sigma^k(z)$ in $g$, this condition is equivalent to 
\[\ol{\p}(k)=\ol{\p_z}(k) \geq \ol{g}(k)-\ol{\sigma^{-1}(g)}(k) = \ol{g}(k)-\ol{g}(k+1) = -(\Delta \ol{g})(k),\] for each $k \in \bb{Z}$, which means that $\ol{\p} \geq  -\Delta \ol{g}$, as stated.

Similarly, the second condition in~\cref{E:lemma:orbit_partition} can be restated as \[\ol{\q}(k)=\ol{\q_z}(k) \geq \ol{g}(k)-\ol{\sigma(g)}(k)=\ol{g}(k)-\ol{g}(k-1)=(\nabla \ol{g})(k),\]
for each $k \in \bb{Z}$, which means that $\ol{\q} \geq  \nabla \ol{g}$, as stated.
\end{proof}

\begin{ex}
\label{ex:graphs}
To illustrate the preceding considerations, suppose that $[z] \in \Irr(R)$ with $\orb([z])$ infinite and $0\neq g\sim g_z$. Let $\ol{g}: \bb{Z} \ra \bb{Z}$ be the function depicted in green below:

\[\begin{tikzpicture}
    \draw[->] (0,0) -- (11,0) node[below] {};
    \draw[->] (0,0) -- (0,4) node[left] {};
    \foreach \x in {1,2,...,10}
        \draw (\x,0.1) -- (\x,-0.1) node[below] {\x};
    \foreach \y in {1,2,...,3}
        \draw (0.1,\y) -- (-0.1,\y) node[left] {\y};
        
    \plot{{0,1,3,2,2,3,3,2,2,0}}{green!50!black}{thick}
\end{tikzpicture}\]

We compute and plot the discrete derivatives, ignoring the negative values. In the diagram below, 
we have $\max\{0,-\Delta(\ol{g})\}$ in red, and $\max\{0,\nabla(\ol{g})\}$ in blue:
\[\begin{tikzpicture}
    \draw[->] (0,0) -- (11,0) node[below] {};
    \draw[->] (0,0) -- (0,3) node[left] {};
    \foreach \x in {1,2,...,10}
        \draw (\x,0.1) -- (\x,-0.1) node[below] {\x};
    \foreach \y in {1,2}
        \draw (0.1,\y) -- (-0.1,\y) node[left] {\y};
        
    \plot{{0,1,2,0,0,1,0,0,0,0}}{blue}{dashed}
    \plot{{0,0,1,0,0,0,1,0,2,0}}{red}{dashed}
\end{tikzpicture}\]
Here, according to \cref{lemma:sub_condition_new}, in order for $\langle g \rangle$ to be a submodule of $V_\p$, we see that $\p$ must be divisible by $\sigma^{3}(z)\sigma^{7}(z)(\sigma^{9}(z))^2$, while $\q$ must be divisible by $\sigma^{2}(z)(\sigma^{3}(z))^2\sigma^{6}(z)$. In other words, each increase in the function $\ol{g}$ corresponds to factors of $\q$ and each decrease in $\ol{g}$ corresponds to factors of $\p$.
\end{ex}

\begin{rmk}
We note that the only difference between the finite and infinite orbit cases is when $0\neq g\sim g_z$ is constant such that $\nabla(\ol{g})=0=\Delta(\ol{g})$. If $\orb([z])$ is infinite, this implies $\ol{g}=0$ since $\ol{g}$ must have finite support; thus $g$ is a unit and $\langle g\rangle=V_\p$. But if $|\orb([z])|=n<\infty$, it is possible for $\ol{g}$ to be constant and equal to $k\in\bb{Z}_{>0}$, which gives $g \sim (z \sigma(z) \sigma^2(z) \cdots \sigma^{n-1}(z))^k$. In the latter case, since $0\neq z\in R^{<\infty}\setminus R^\times$, \cref{P:modules-of-finite-len} implies that all free $A$-modules have infinite length. 
\end{rmk}

To describe the submodule structure of each $V_\p$ we should solve the opposite problem: given $[z]\in\Irr(R)$ and fixed functions $\ol{\p}$ and $\ol{\q}$, what choices of $g$ are possible in order for $\langle g \rangle$ to be a submodule of $V_\p$. The following notion will be helpful.

\begin{defi}
Let $r, s\in\Irr(R)$ and assume that there is some $n\geq 0$ such that $s\sim \sigma^n(r)$, with $n$ minimum with this property. 
We define the corresponding \textbf{chain product} from $[r]$ to $[s]$ as
\[\ca{P}(r,s) = \prod_{i=0}^n \sigma^i(r)
.\] 
\end{defi}



\begin{lemma}\label{L:basic-submodules}
Suppose that $\q_0\in \Irr(\q)$ and that $\sigma^{n}(\q_0)=\p_0 \in \Irr(\p)$, for some $n\geq 0$. 
In case the orbit of $[\q_0]$ is finite, we shall also require that $n \leq |\orb([\q_0])|-1$ (so $n\geq0$ is minimal such that $\p_0\sim \sigma^{n}(\q_0)$).
For such $\p_0, \q_0$, the ideal 
\[\langle \ca{P}(\q_0,\p_0) \rangle = \langle \prod_{i=0}^n \sigma^i(\q_0) \rangle\]
generated by the chain product from $[\q_0]$ to $[\p_0]$ is a submodule of $V_\p$ isomorphic to $V_{\p'}$, where $\p'= \sigma^{-1}(\q_0)\frac{\p}{\p_0}$ and $\q'=\sigma(\p_0)\frac{\q}{\q_0}$.
\end{lemma}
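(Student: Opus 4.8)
The natural approach is to reduce everything to \cref{lemma:sub_condition_new} by working inside the single $\mathcal G$-orbit $\orb([\q_0])=\orb([\p_0])$, and then to read off the isomorphism type from \cref{lemma:sub_condition}. Set $g=\ca P(\q_0,\p_0)=\prod_{i=0}^n\sigma^i(\q_0)$. Up to associates $g\sim g_{\q_0}$ (all its irreducible factors lie in $\orb([\q_0])$), so the function $\ol g=\olgz[g]$ is the indicator function of $\{0,1,\dots,n\}$ (in the finite-orbit case, since $n\le|\orb([\q_0])|-1$, these $n+1$ residues are distinct mod $|\orb([\q_0])|$, so $\ol g$ is still well-defined as a function $\bb Z_n\to\bb Z$ taking value $1$ on $\{0,\dots,n\}$ and $0$ elsewhere). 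First I would compute the discrete derivatives: $\Delta\ol g(k)=\ol g(k+1)-\ol g(k)$ equals $+1$ at $k=-1$, equals $-1$ at $k=n$, and is $0$ otherwise; similarly $\nabla\ol g(k)=\ol g(k)-\ol g(k-1)$ equals $+1$ at $k=0$, equals $-1$ at $k=n+1$, and is $0$ otherwise. Hence $\max\{0,-\Delta\ol g\}$ is the indicator of $\{n\}$ and $\max\{0,\nabla\ol g\}$ is the indicator of $\{0\}$.

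Next I would verify the two inequalities of \cref{lemma:sub_condition_new}. We need $\ol\p\ge -\Delta\ol g$ and $\ol\q\ge\nabla\ol g$. Since $-\Delta\ol g$ is supported at $k=n$ with value $1$ there (and value $-1$ at $k=-1$, which imposes nothing), the first inequality amounts to $\ol\p(n)\ge 1$, i.e.\ $\sigma^n(\q_0)=\p_0$ divides $\p_z$; this holds because $\p_0\in\Irr(\p)$ by hypothesis. Symmetrically, $\nabla\ol g$ is supported at $k=0$ with value $1$ (value $-1$ at $k=n+1$), so the second inequality reduces to $\ol\q(0)\ge 1$, i.e.\ $\q_0$ divides $\q_z$, which is again the hypothesis $\q_0\in\Irr(\q)$. (In the finite-orbit case one must make sure that the residues $n$ and $-1$, resp.\ $0$ and $n+1$, do not collide in $\bb Z_n$, which is guaranteed by $n\le|\orb([\q_0])|-1$; this is precisely why that restriction is imposed.) Thus $\langle g\rangle$ is an $A$-submodule of $V_\p$.

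For the isomorphism type, \cref{lemma:sub_condition} (or \cref{P:class-rk-1-general}) gives $\langle g\rangle\simeq V_{\p'}$ with $\p'=\sigma^{-1}(g)\p/g$. Writing $g=\q_0\cdot\sigma(g')$ where $g'=\prod_{i=0}^{n-1}\sigma^i(\q_0)$, one has $\sigma^{-1}(g)=\sigma^{-1}(\q_0)\cdot g'$ and $g=\sigma(g')\cdot\q_0$, so after cancelling the common factor (legitimate since $R$ is a domain and one checks the factors match, using $\sigma(g')$ vs.\ $g'$ shifts only by the endpoints) the quotient $\sigma^{-1}(g)/g$ telescopes to $\sigma^{-1}(\q_0)/\p_0$ up to a unit; hence $\p'=\sigma^{-1}(\q_0)\,\p/\p_0$, and correspondingly $\q'=\q_{\p'}=\sigma(g)\q/g=\sigma(\p_0)\,\q/\q_0$, matching \cref{lemma:sub_condition}. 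The one genuinely delicate point is the bookkeeping in the finite-orbit case: one must argue that the constraint $n\le|\orb([\q_0])|-1$ prevents $\ol g$ from becoming the constant function (which, by the Remark following \cref{lemma:sub_condition_new}, would force a different analysis) and ensures the support computations above are valid modulo $|\orb([\q_0])|$; everything else is the routine telescoping of the chain product. I would present the argument uniformly, treating $\ol g$ as a function on $\bb Z$ (infinite orbit) or on $\bb Z_n$ with $n=|\orb([\q_0])|$ (finite orbit), and note that the stated minimality of $n$ makes the two cases formally identical at the level of the derivative computations.
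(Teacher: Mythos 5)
Your argument is correct in substance, but it is a slightly more roundabout route than the paper's and contains one inaccurate side claim. The paper proves the lemma by applying \cref{lemma:sub_condition} directly: every irreducible factor of $g=\ca{P}(\q_0,\p_0)$ is a factor of $\sigma(g)$ except the first one, $\q_0$, which divides $\q$, and every factor of $g$ is a factor of $\sigma^{-1}(g)$ except the last one, $\sigma^n(\q_0)=\p_0$, which divides $\p$; the formulas for $\p'$ and $\q'$ then also come from \cref{lemma:sub_condition}, exactly as in your telescoping computation (which is correct, and needs no ``up to a unit'' since $\sigma^n(\q_0)=\p_0$ exactly). You instead pass through the derivative criterion \cref{lemma:sub_condition_new}; this is equivalent (that lemma is itself the single-orbit reformulation of \cref{lemma:sub_condition}), so nothing is lost, but it forces the $\bb{Z}_\ell$ bookkeeping that causes your only slip: the hypothesis $n\le|\orb([\q_0])|-1$ does \emph{not} prevent $\ol{g}$ from being the constant function, nor the residues $n$ and $-1$ (resp.\ $0$ and $n+1$) from colliding modulo $\ell=|\orb([\q_0])|$ --- both happen precisely in the allowed boundary case $n=\ell-1$, where the chain is the full orbit. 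That case is harmless for the statement (then $\Delta\ol{g}=\nabla\ol{g}=0$, the inequalities of \cref{lemma:sub_condition_new} hold trivially, and the remark following that lemma concerns length, not the validity of the criterion), but as written your justification fails there and should be replaced by this one-line observation. Finally, watch the notational clash: you use $n$ both for the chain length and for the orbit size.
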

\begin{proof}
This follows directly from \cref{lemma:sub_condition}, as every factor of the chain product $\ca{P}(\q_0,\p_0)=\prod_{i=0}^n \sigma^i(\q_0)$ is a factor of $\sigma(\ca{P}(\q_0,\p_0))$, except the first factor $\q_0$ which divides $\q$, and every factor of $\ca{P}(\q_0,\p_0)$ is a factor of $\sigma^{-1}(\ca{P}(\q_0,\p_0))$, except the last factor $\sigma^n(\q_0)= \p_0$, which is a factor of $\p$. The formulas for $\p'$ and $\q'$ also follow from \cref{lemma:sub_condition}.
\end{proof}

Henceforth, we will use {\bf basic chain products} to refer to those chain products $\ca{P}(\q_0,\p_0)$ as in
\cref{L:basic-submodules} such that, in case the orbit of $[\q_0]$ is finite then $n < |\orb([\q_0])|-1$ (i.e., so that $\{[\sigma^i(\q_0)]\mid 0\leq i\leq n\}$ is not the whole orbit), and {\bf basic submodules} the submodules $\langle \ca{P}(\q_0,\p_0) \rangle$ of $V_\p$ that they generate.


Note that on taking a basic submodule $V_{\p'}$ of $V_{\p}$, the pair $(\p',\q')$ is essentially obtained from $(\p,\q)$ by switching two irreducible factors $\q_0\mid \q$ and $\p_0\mid \p$ (and applying $\sigma^{-1}$ and $\sigma$, respectively).

\begin{thm}\label{T:all-principal-submodules}
The $A$-submodules of $V_\p$ which are cyclic over $R$ are those of the form $\langle g\rangle$, where $g$ is either $0$ or a product of basic chain products and products of full finite orbits:
\begin{align}\label{E:all-principal-submodules:g}
g \sim \big(\prod_{i=1}^n \ca{P}(\q_i,\p_i)\big) \cdot \big(\prod_{j=1}^m \prod_{k=1}^{|\orb(z_j)|} \sigma^{k}(z_j)\big), 
\end{align}
for $\p_i, \q_i\in\Irr(R)$ such that $\prod_{i=1}^n \q_i\mid \q$ and $\prod_{i=1}^n \p_i\mid \p$, and where the $z_j$ are irreducible elements of some (not necessarily distinct) finite orbits.

Moreover, we may always choose the basic chains so that, for each pair of indices $i,j$, we have $\sigma(\p_i) \not\sim \q_j$ (otherwise concatenate the chains), and such that $\ca{P}(\q_i,\p_i)$ and $\ca{P}(\q_j,\p_j)$ are either coprime or one divides the other. 
\end{thm}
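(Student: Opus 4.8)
The plan is to combine the orbit-decomposition result (\cref{lemma:orbit_partition}) with the local analysis via the functions $\ol{g}$, $\ol{\p}$, $\ol{\q}$ (\cref{lemma:sub_condition_new}), reducing the whole problem to a combinatorial statement about nonnegative integer-valued functions on $\bb{Z}$ (or on $\bb{Z}_n$) whose discrete derivatives are dominated by fixed profiles. Concretely, I would first invoke \cref{lemma:orbit_partition} to write $g = g_0 g_1 \cdots g_N$ according to $\mathcal G$-orbits, so that $\langle g\rangle$ is a submodule of $V_\p$ iff each $\langle g_i\rangle$ is; since the orbits are pairwise disjoint, the claimed factorization of $g$ will be assembled orbit by orbit, and the intersection statement $\langle g\rangle = \bigcap \langle g_i\rangle$ is already in place. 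So it suffices to treat a single orbit $\orb([z])$.

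**The single-orbit computation.** Fix $[z]\in\Irr(R)$ and assume $g\sim g_z$, so $\ol{g}\colon\bb Z\to\bb Z_{\ge 0}$ has finite support if the orbit is infinite, and is $n$-periodic if $|\orb([z])|=n$. By \cref{lemma:sub_condition_new} the constraint is $\ol{\p}\ge -\Delta\ol{g}$ and $\ol{\q}\ge\nabla\ol{g}$. I would argue that any such $\ol{g}$ decomposes as a sum of ``unit staircases'': functions that are $1$ on an interval $[j+1,k]$ (cyclically, in the periodic case) and $0$ elsewhere — each such indicator being exactly the $\ol{\,\cdot\,}$-profile of a chain product $\ca{P}(\sigma^{j+1}(z),\sigma^{k}(z))$ — plus, in the finite-orbit case only, some multiple of the constant function $1$, which corresponds to a full finite orbit $\prod_{k=1}^{n}\sigma^k(z)$. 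The decomposition into staircases is obtained by the standard ``peeling'' argument: repeatedly subtract the indicator of a maximal plateau-supported interval between a rise and the next fall of $\ol{g}$; induction on $\sum_k \ol g(k)$ (resp.\ on $\max\ol g$ in the periodic case, after first subtracting the constant part) closes it. One then checks that each rise of $\ol{g}$ of height $h$ forces $h$ factors of $\ol{\q}$ at that point, and each fall forces that many factors of $\ol{\p}$, which is precisely the bookkeeping that makes $\prod\q_i\mid\q$ and $\prod\p_i\mid\p$ hold after assembling all orbits. Conversely, that a product as in \cref{E:all-principal-submodules:g} is a submodule is immediate from \cref{L:basic-submodules} applied to each basic chain (and the trivial observation that a full finite orbit is $\sigma$-stable, hence contributes nothing to either inequality), since products of submodules of this cyclic type remain submodules — one verifies the two inequalities add up.

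**Normalizing the chains.** For the ``moreover'' clause: given any representation of $\ol g$ as a sum of interval-indicators, if two staircases $\ca{P}(\q_i,\p_i)$ and $\ca{P}(\q_j,\p_j)$ have $\sigma(\p_i)\sim\q_j$, their supporting intervals are adjacent, so their indicators sum to the indicator of the concatenated interval, which is the profile of the single chain $\ca{P}(\q_i,\p_j)$; iterating removes all such adjacencies. Once no concatenations remain, for any two staircases the supports are two cyclic intervals that are non-adjacent; I would observe that two interval-indicators with one $\ge$ the other pointwise correspond to nested intervals (hence one chain product divides the other), and if neither dominates, a short exchange argument — swap the tails of the two intervals past their crossing point — replaces the pair by a comparable pair with the same sum, same rises and same falls (so the divisibility constraints on $\p,\q$ are untouched), allowing us to assume any two chains are comparable or coprime.

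**Main obstacle.** The genuine subtlety is the finite-orbit case: there ``staircase'' must be read cyclically, a constant function is not forced to vanish, and the definition of basic chain product explicitly excludes chains wrapping the whole orbit — so I must be careful that the peeling procedure, after extracting the largest constant summand $c\cdot\mathbf 1$ (which becomes $c$ copies of a full orbit), leaves a periodic function whose ``graph'' genuinely returns to $0$ somewhere in each period, guaranteeing that every remaining staircase is a \emph{proper} cyclic interval and hence a legitimate basic chain. Getting this reduction airtight — and checking the exchange/concatenation normalization respects periodicity — is where the real work lies; everything else is the routine translation between divisibility of elements and pointwise inequalities of their multiplicity functions.
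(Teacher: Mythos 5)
Your proposal is correct and follows essentially the same route as the paper: reduce to a single $\mathcal G$-orbit via \cref{lemma:orbit_partition}, translate the submodule condition through \cref{lemma:sub_condition_new} into inequalities on discrete derivatives, decompose $\ol{g}$ into interval indicators (plus a constant summand, i.e.\ full-orbit factors, in the finite-orbit case), and read off the divisibility of the interval endpoints into $\q$ and $\p$, with the same concatenation/exchange normalization for the ``moreover'' clause. The only cosmetic difference is that the paper obtains the interval decomposition from superlevel sets rather than by your peeling induction, which yields the same decomposition.
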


Note that, with $g$ as in \cref{E:all-principal-submodules:g} and using \cref{lemma:sub_condition}, we have that $\langle g\rangle\simeq V_{u\p'}\simeq V_{\p'}^{\tau_u}$ (see~\cref{L:unit-twists}), where $\p'=\p\prod_{i=1}^n \frac{\sigma^{-1}(\q_i)}{\p_i}$ and $u\in R^\times$ is a unit which can arise from the full products of finite orbits $\prod_{k=1}^{\ell} \sigma^{k}(z)$ with $|\orb(z)|=\ell$. This is because such an orbit will induce the unit factor $\frac{z}{\sigma^\ell(z)}$ in the formula from \cref{lemma:sub_condition}, since $\sigma^\ell(z)$ is only assumed to be an associate of $z$. So, in particular, up to the unit twists defined in~\cref{L:unit-twists}, the isomorphism class of $\langle g\rangle$ doesn't depend on the factors which are products of finite orbits.

\begin{proof}
Assume that $\langle g \rangle$ is a submodule of $V_\p$. Without loss of generality, we may assume that $g$ is neither zero nor a unit and that all irreducible factors of $g$ belong to a single orbit $\orb([z])$, for some $z\in\Irr(R)$, by \cref{lemma:orbit_partition}. 
Identify $g$ with the corresponding function $\ol{g}:\bb{Z} \ra \bb{Z}$, as before. 

Assume first that $\orb([z])$ is infinite. Then $\ol{g}$ is nonnegative and has finite support, so it can be expressed uniquely as a sum of indicator functions of its superlevel sets, as
\[\ol{g} = \sum_{k=1}^m 1_{S_k},\quad  \text{ where } S_k=\{ j \in \bb{Z} \mid  \ol{g}(j)) \geq k \}. \]
So $\ol{g}$ can be expressed uniquely as a sum of indicator functions on $\bb{Z}$-intervals
\[\ol{g} = \sum_{i=1}^n 1_{[a_i,b_i]},\]
where $n$ is as small as possible (so e.g.\ $1_{[a,b]}+1_{[b+1,c]}$ is replaced by $1_{[a,c]}$) and such that for each $i,j$, either $[a_i,b_i]$ and $[a_j,b_j]$ are disjoint or one includes the other (so e.g.\ $1_{[a_i,b_i]}+1_{[a_j,b_j]}$ is replaced by $1_{[a_i,b_j]}+1_{[a_j,b_i]}$ if $a_i < a_j \leq b_i < b_j$).
Now let $\q_i:=\sigma^{a_i}(z)$ and $\p_i:=\sigma^{b_i}(z)=\sigma^{b_i-a_i}(\q_i)$, so that $g\sim\prod_{i=1}^n \ca{P}(\q_i,\p_i)$.

It remains to show that $\prod_{i=1}^n \ca{P}(\q_i,\p_i)$ generates a submodule of $V_\p$ if and only if $\prod_{i=1}^n \q_i\mid \q$ and $\prod_{i=1}^n \p_i\mid \p$. Indeed, if $\ol{g} = \sum_{i=1}^n 1_{[a_i,b_i]}$, then 
\[(-\Delta \ol{g})(k)=\ol{g}(k)-\ol{g}(k+1) =\sum_{i=1}^n \delta_{k,b_i} - \delta_{k,a_i-1},\] where $\delta$ is the Kronecker delta function. Since, for all $i,j$, $a_i-1 \neq b_j$, we have $\max\{0,-\Delta\ol{g}\}=\sum_{i=1}^n 1_{\{b_i\}}$ and the first condition of 
\cref{lemma:sub_condition_new} for $\langle g\rangle$ to be a submodule of $V_\p$ becomes
$\ol{\p}\geq  \sum_{i=1}^n 1_{\{b_i\}}$, which is equivalent to $\prod \p_i \mid \p$.

Analogously, we have
$(\nabla \ol{g})(k)=\ol{g}(k)-\ol{g}(k-1) = \sum_{i=1}^n \delta_{k,a_i} - \delta_{k,b_i+1}$, so the second condition from
\cref{lemma:sub_condition_new} becomes $\ol{\q}\geq  \sum_{i=1}^n 1_{\{a_i\}}$, which is equivalent to $\prod \q_i \mid \q$.

The same argument works when $|\orb([z])|=\ell<\infty$, except that the function $\ol{g}: \bb{Z}_\ell \ra \bb{Z}$ is expressed uniquely as
\[\ol{g} = c\cdot \id + \sum_{i=1}^n 1_{[a_i,b_i]}, \]
where $c=\min\ol{g}\geq 0$, $n$ is as small as possible, the intervals $[a_i, b_i], [a_j, b_j] \subseteq \bb{Z}_\ell$ are taken circularly (modulo $\ell$) and assumed to be either disjoint or one contained in the other.
The term $c\cdot \id$ corresponds to a factor $\big(\prod_{i=1}^\ell\sigma^i(z)\big)^c$, a power of the full orbit product of $[z]$.
\end{proof}


\begin{ex}
Let us look again at the generator $g$ of the ideal from \cref{ex:graphs}, which factors as a product of four chains:
\[g = \ca{P}(\sigma^2(z),\sigma^9(z)) \cdot \ca{P}(\sigma^3(z),\sigma^9(z)) \cdot \ca{P}(\sigma^3(z),\sigma^3(z)) \cdot \ca{P}(\sigma^6(z),\sigma^7(z)),\]
as visualized below.
\[\begin{tikzpicture}
    \draw[->] (0,0) -- (11,0) node[below] {};
    \draw[->] (0,0) -- (0,4) node[left] {};
    \foreach \x in {1,2,...,10}
        \draw (\x,0.1) -- (\x,-0.1) node[below] {\x};
    \foreach \y in {1,2,...,3}
        \draw (0.1,\y) -- (-0.1,\y) node[left] {\y};
        
    \plot{{0,1,3,2,2,3,3,2,2,0}}{green!50!black}{thick}
    
    \draw[line width=10pt, green!50!black] (2-0.25,0.5)--(9+0.25,0.5);
    \draw[line width=10pt, green!50!black] (3-0.25,1.5)--(9+0.25,1.5);
    \draw[line width=10pt, green!50!black] (3-0.25,2.5)--(3+0.25,2.5);
    \draw[line width=10pt, green!50!black] (6-0.25,2.5)--(7+0.25,2.5);
\end{tikzpicture}\]
As we determined in the example, the product of left end-points of the chains must divide $\q$ and the product of right endpoints must divide $\p$ in order for $\langle g \rangle$ to be a submodule of $V_\p$.
\end{ex}

\section{The case where $R$ is a PID}\label{S:PID}

In this section, assume that $R$ is a principal ideal domain (PID). All of the previous results apply, but in the PID case we shall be able to give full details concerning the simplicity, submodule structure and composition series of the modules $V_\p$.

\subsection{Simplicity and submodule structure of $V_\p$}\label{SS:PID:simp-subm}

Since $R$ is a PID, all submodules of $V_\p$ are $R$-cyclic, so \cref{T:all-principal-submodules} provides a complete description of all submodules of $V_\p$. In addition, given nonzero submodules $\langle g \rangle$ and $\langle g' \rangle$ with $g, g'$ decomposed as in  \cref{E:all-principal-submodules:g}, then 
 $\langle g \rangle \subseteq \langle g' \rangle$ if and only if $g'\mid g$, so that $g$ is obtained from $g'$ through multiplication by a number of basic chain products and a number of full orbit products. So indeed \cref{T:all-principal-submodules} describes the submodule structure of $V_\p$ completely.

We define a relation $\lleq$ on $\Irr(R)$ by setting
\[[r]\lleq [s] \iff [\sigma^n(r)]=[s] \quad \text{for some $n\geq 0$.} \]
Then $[r]$ and $[s]$ are comparable if and only if they belong to the same $\mathcal G$-orbit. For convenience, we may also write $r \lleq s$ in place of $[r]\lleq [s]$. We will write $r \llneq s$ to mean that $r \lleq s$ but $r\not\sim s$. Although defined in general, this notion is interesting only for infinite $\mathcal G$-orbits.

Suppose that $V_\p$ is simple, for some $\p\mid a$. Then in particular $V_\p$ has finite length and thus \cref{P:modules-of-finite-len} implies that $\orb([z])$ is infinite for all $z\in\Irr(R)$. This observation together with \cref{T:all-principal-submodules} give the following simplicity criterion.  

\begin{cor}
\label{cor:simplicity}
Assume that $R$ is a PID. The module $V_\p$ is simple if and only if the following conditions hold:
\begin{enumerate}[label=(\arabic*)]
\item all $\mathcal G$-orbits in $\Irr(R)$ are infinite, and
\item if there are irreducibles $\p_0\mid \p$ and $\q_0\mid \q$ in the same $\mathcal G$-orbit, then $\p_0\llneq\q_0$; in other words, we have: \[\sigma^{n}(\q_0) \in \Irr(\p) \text{ for some } \q_0 \in \Irr(\q) \implies n<0.\]
\end{enumerate}
\end{cor}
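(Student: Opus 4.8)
The plan is to deduce the simplicity criterion directly from the structural description of $R$-cyclic submodules in \cref{T:all-principal-submodules}, using that over a PID \emph{every} submodule of $V_\p$ is $R$-cyclic, hence of the form $\langle g\rangle$ for $g$ as in \cref{E:all-principal-submodules:g}. Recall that $V_\p$ itself equals $\langle 1\rangle = {}_RR$, so $V_\p$ is simple precisely when the only submodules are $0 = \langle 0\rangle$ and $V_\p = \langle u\rangle$ with $u$ a unit. Thus simplicity is equivalent to: there is no $g\in R$ which is neither $0$ nor a unit and generates a submodule.

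For the forward direction, suppose $V_\p$ is simple. First, if some $\mathcal G$-orbit $\orb([z])$ were finite, then (as noted in the remark following \cref{ex:graphs}) $z\in R^{<\infty}\setminus R^\times$, so by \cref{P:modules-of-finite-len} all $R$-free $A$-modules, in particular $V_\p$, have infinite length — contradicting simplicity. This gives condition (1). Next, suppose for contradiction that there are irreducibles $\p_0\mid\p$ and $\q_0\mid\q$ in the same orbit with $\p_0\not\llneq\q_0$; since all orbits are infinite and the two are comparable, this forces $\q_0\lleq\p_0$, i.e.\ $\sigma^n(\q_0)\sim\p_0$ for some $n\geq 0$. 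Then \cref{L:basic-submodules} produces the basic submodule $\langle\ca P(\q_0,\p_0)\rangle = \langle\prod_{i=0}^n\sigma^i(\q_0)\rangle$, which is a proper nonzero submodule (its generator is a nonunit since it has at least one irreducible factor, and is nonzero), contradicting simplicity. Hence (2) holds.

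For the converse, assume (1) and (2), and let $\langle g\rangle$ be a nonzero submodule with $g$ a nonunit; I must derive a contradiction. By (1) and \cref{T:all-principal-submodules} there are no full-finite-orbit factors, so $g\sim\prod_{i=1}^n\ca P(\q_i,\p_i)$ with $n\geq 1$ (as $g$ is a nonunit, at least one chain is present), where $\prod\q_i\mid\q$ and $\prod\p_i\mid\p$. Pick any one chain $\ca P(\q_j,\p_j)$: its left endpoint $\q_j$ is an irreducible dividing $\q$, its right endpoint $\p_j = \sigma^{m}(\q_j)$ for some $m\geq 0$ divides $\p$, and $\q_j,\p_j$ lie in the same orbit. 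By (2) applied with $\q_0 = \q_j$, $\p_0 = \p_j$, we need $\p_j\llneq\q_j$, i.e.\ $\sigma^{k}(\p_j)\sim\q_j$ for some $k\geq 1$; combined with $\p_j=\sigma^m(\q_j)$ and the orbit being infinite (so $\sigma$ acts on $\orb([z])\cong\bb Z$ by a shift, with no periodicity), this yields $m + k = 0$ with $m\geq 0$, $k\geq 1$ — impossible. (Equivalently: on an infinite orbit $\sigma^m(\q_j)\sim\q_j$ forces $m=0$, and the condition $\p_j\lleq\q_j$ with $\p_j=\sigma^m(\q_j)$, $m\ge 0$ forces $m=0$, i.e.\ $\p_j\sim\q_j$, contradicting (2) which requires the strict inequality $\p_0\llneq\q_0$.) So no such $g$ exists and $V_\p$ is simple.

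I expect the main obstacle — really the only subtle point — to be getting the logical bookkeeping of condition (2) exactly right: it is phrased asymmetrically ($\p_0\llneq\q_0$, i.e.\ strict, and in the implication form $\sigma^n(\q_0)\in\Irr(\p)$ for $\q_0\in\Irr(\q)$ forces $n<0$), so I need to be careful that a basic chain product $\ca P(\q_0,\p_0)$ with $\p_0\sim\sigma^n(\q_0)$, $n\geq 0$, always contradicts (2) — including the boundary case $n=0$ where $\q_0\sim\p_0$ is itself a single-factor submodule — and that conversely (2) rules out every chain. Once the infinite-orbit hypothesis (1) is in hand, comparability in an orbit is exactly comparability in $(\bb Z,\leq)$ after choosing a base point, which makes the inequalities transparent; the only thing to emphasize is that \cref{P:modules-of-finite-len} is what forces (1), since otherwise finite orbits would give full-orbit-product submodules and infinite length.
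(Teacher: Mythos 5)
Your proposal is correct and follows exactly the paper's route: the paper derives this corollary from \cref{P:modules-of-finite-len} (simple $\Rightarrow$ finite length $\Rightarrow$ all $\mathcal G$-orbits infinite) combined with the classification of $R$-cyclic submodules in \cref{T:all-principal-submodules} (which, over a PID, covers all submodules), and your write-up just makes the bookkeeping of the chain endpoints and the strictness in condition (2) explicit.
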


Now we can explicitly describe all maximal submodules of $V_\p$.

\begin{cor}
\label{cor:max_subs}
Assume that $R$ is a PID. The maximal submodules of $V_\p$ come in three types:
\begin{enumerate}[label=(\Roman*)]
\item $\langle 0\rangle$, if $V_\p$ is simple (see \cref{cor:simplicity}).
\item \label{cor:max_subs:fin:not-p-and-q} Full finite-orbit maximal submodules: 
\[\left \langle \prod_{i=1}^{\ell}\sigma^i(z)\right\rangle,\]
where $z\in\Irr(R)$ with $|\orb(z)|=\ell<\infty$ and either:
\begin{enumerate}[label=(\roman*)]
\item $\orb(z)\cap \Irr(\p) = \varnothing$;
\item $\orb(z)\cap \Irr(\q) = \varnothing$;
\item $\orb(z)\cap \Irr(\p) = \{\p_0\}$, $\orb(z)\cap \Irr(\q) = \{\q_0\}$ and $\p_0=\sigma^{\ell-1}(\q_0)$.
\end{enumerate}
\item \label{cor:max_subs:fin:basic} Maximal basic submodules:
\[\langle \ca{P}(\q_0,\p_0) \rangle = \left\langle \prod_{i=0}^n \sigma^i(\q_0)\right\rangle,\]
where $\q_0 \in \Irr(\q)$, $\sigma^{n}(\q_0)=\p_0 \in \Irr(\p)$ for some $n\geq 0$, such that there are no $0\leq i\leq j\leq n$ with $j-i<n$, $\sigma^i(\q_0) \in \Irr(\q)$ and $\sigma^j(\q_0) \in \Irr(\p)$.
\end{enumerate}
\end{cor}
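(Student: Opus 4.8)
The plan is to classify maximal submodules of $V_\p$ by exploiting the complete description of $R$-cyclic submodules from \cref{T:all-principal-submodules}, combined with the submodule containment criterion $\langle g\rangle\subseteq\langle g'\rangle\iff g'\mid g$ discussed at the start of this section. Since $R$ is a PID, every submodule of $V_\p$ is of the form $\langle g\rangle$ with $g$ of the shape \eqref{E:all-principal-submodules:g}, i.e.\ a product of basic chain products $\ca{P}(\q_i,\p_i)$ and full finite-orbit products $\prod_{k=1}^{|\orb(z_j)|}\sigma^k(z_j)$. So I would first observe that a nonzero proper submodule $\langle g\rangle$ is maximal if and only if no proper submodule sits strictly between it and $V_\p=\langle 1\rangle$, which by the containment criterion means: $g$ is a nonunit, and $g$ admits no factorization $g\sim g'g''$ with $g''$ a nonunit and $\langle g'\rangle$ still a proper submodule of $V_\p$. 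In other words $g$ must be a \emph{single} indecomposable ``atom'' of the monoid of submodule-generators — either one basic chain product or one full finite-orbit product — that cannot be shortened.

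Next I would split into the three cases of the statement. Case (I) is immediate: if $V_\p$ is simple then $\langle 0\rangle$ is the unique maximal submodule by definition, and \cref{cor:simplicity} gives the stated criterion. For the remaining cases, suppose $V_\p$ is not simple, so by \cref{cor:simplicity} either some orbit is finite or there exist $\p_0\mid\p$, $\q_0\mid\q$ in the same orbit with $\p_0\lleq\q_0$ after an appropriate shift — equivalently $\sigma^n(\q_0)=\p_0$ for some $n\geq 0$. By \cref{lemma:orbit_partition} and the monoid structure, every maximal $\langle g\rangle$ must be supported on a single $\mathcal G$-orbit, and within that orbit it must be ``minimal'' among nonzero proper submodules. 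I would then argue that such a minimal nonzero proper submodule is exactly one of: (II) a full product of a single finite orbit $\prod_{i=1}^\ell\sigma^i(z)$, which by \cref{lemma:sub_condition} is a submodule precisely when the divisibility conditions on that orbit's contributions to $\p$ and $\q$ hold — and analyzing $\Irr(\p)\cap\orb(z)$ and $\Irr(\q)\cap\orb(z)$ via \cref{lemma:sub_condition_new} (noting a constant function on $\bb{Z}_\ell$ has zero discrete derivatives, so the only constraints are the ``boundary'' ones) yields exactly the three subcases (i)--(iii); or (III) a basic chain product $\ca{P}(\q_0,\p_0)$ with $\q_0\mid\q$, $\sigma^n(\q_0)=\p_0\mid\p$, subject to the minimality condition that the chain cannot be written as a concatenation of two shorter valid chains — which translates into: there is no intermediate index pair $0\leq i\leq j\leq n$ with $j-i<n$, $\sigma^i(\q_0)\in\Irr(\q)$ and $\sigma^j(\q_0)\in\Irr(\p)$, since such a pair would produce a proper submodule $\langle\ca{P}(\sigma^i(\q_0),\sigma^j(\q_0))\rangle$ strictly between $\langle\ca{P}(\q_0,\p_0)\rangle$ and $V_\p$.

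Conversely I would verify that each listed submodule is genuinely maximal: given $\langle g\rangle$ with $g$ one of the atoms in (II) or (III), any submodule $\langle g'\rangle\supsetneq\langle g\rangle$ has $g'\mid g$ properly; using \cref{T:all-principal-submodules} to factor $g'$ into basic chains and finite-orbit products, a proper divisor of a single atom (an irreducible-shaped chain or a full orbit that cannot be shortened) must be a unit, so $\langle g'\rangle=V_\p$. For the finite-orbit type one uses that any proper sub-chain inside $\prod_{i=1}^\ell\sigma^i(z)$ fails one of the divisibility conditions \eqref{E:irrg-p}--\eqref{E:irrg-q} precisely under hypotheses (i)--(iii); for the basic type, the ``no intermediate pair'' condition is exactly what rules out shortening. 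I expect the main obstacle to be bookkeeping: making the correspondence between ``minimal nonzero proper submodule on an orbit'' and the combinatorial atoms fully rigorous, especially the finite-orbit case where the interplay between the cyclic $\bb{Z}_\ell$-intervals, the unit factor $z/\sigma^\ell(z)$, and the three boundary configurations (i)--(iii) needs to be checked carefully against \cref{lemma:sub_condition}; the infinite-orbit/basic-chain case is comparatively clean since it reduces directly to the concatenation analysis already set up in \cref{T:all-principal-submodules}.
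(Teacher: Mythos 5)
Your proposal is correct and follows essentially the same route as the paper, which gives no separate proof of this corollary precisely because it is meant to follow from \cref{T:all-principal-submodules} together with the remark opening \cref{S:PID} that every submodule of $V_\p$ is $R$-cyclic and that $\langle g\rangle\subseteq\langle g'\rangle\iff g'\mid g$, so that maximal submodules correspond to single unshortenable atoms (one basic chain or one full finite-orbit product), with \cref{cor:simplicity} covering type (I). Your phrase ``minimal among nonzero proper submodules'' should be read as ``generator minimal in the divisibility order among nonunit submodule generators'' (maximal submodules are not minimal submodules), but your earlier formulation of the criterion and the case analysis make the intended meaning clear, so this is only a wording slip, not a gap.
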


Our next result is somewhat surprising: it characterizes the GWAs over a PID for which all $R$-free modules of rank $1$ are simple.

\begin{thm} 
\label{thm:all_simple}
Let $A=R(\sigma,a)$ be a generalized Weyl algebra over the principal ideal domain $R$. Then all $R$-free modules of rank $1$ are simple if and only if either $R$ is a field, or $A$ is a simple ring.
\end{thm}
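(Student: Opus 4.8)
The plan is to prove the two implications separately, relying on the simplicity criterion in \cref{cor:simplicity} and the characterization in \cref{R:general-facts-GWA}\cref{R:general-facts-GWA:simple} of when a GWA is simple.

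For the easy direction, suppose first that $R$ is a field. Then the only divisors of $a$ up to units are $1$ and $a$ itself (indeed $R^\times=R\setminus\{0\}$), so the only module in $\mathscr{C}_1$ up to isomorphism is $V_1\simeq V_a$, which is the skew Laurent polynomial realization; since $R=R^\times\cup\{0\}$ has no nonzero proper ideals, \cref{T:all-principal-submodules} (or directly \cref{L:subm-Vp-units}) forces $V_\p$ simple. Next suppose $A$ is a simple ring. By \cref{R:general-facts-GWA}\cref{R:general-facts-GWA:simple}(d), no positive power of $\sigma$ is inner; since $R$ is commutative this means $\sigma$ has infinite order, and combined with condition (c), $R=Ra+R\sigma^n(a)$ for all $n\geq 1$. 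I would argue that infinite order of $\sigma$ on the PID $R$ together with this coprimality condition forces every $\mathcal G$-orbit in $\Irr(R)$ to be infinite (a finite orbit $\orb([z])$ of length $\ell$ would make $z$ and $\sigma^\ell(z)$ associate, and I expect to derive from (b)/(c) a contradiction, e.g.\ using that $\sigma$ would then fix the nonzero prime $\langle z\sigma(z)\cdots\sigma^{\ell-1}(z)\rangle$ up to the $\sigma$-ideal condition — here one must be careful, since $\sigma$ can have infinite order yet still have finite orbits on $\Irr(R)$; so the real point is that a finite orbit product is an element of $R^{<\infty}$ and \cref{P:modules-of-finite-len} shows $R^{<\infty}=R^\times\cup\{0\}$ once we know simple/finite-length $R$-free modules exist, but that is circular, so instead I will show directly from (c) that if $z\sim\sigma^\ell(z)$ then some $R\sigma^n(a)$ computation fails). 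Granting condition (1) of \cref{cor:simplicity}, I must also verify condition (2): if $\p_0\mid\p$ and $\q_0\mid\q$ lie in the same orbit with $\sigma^n(\q_0)=\p_0$, $n\geq 0$, then both $\q_0$ and $\p_0$ divide $a=\sigma^{-1}(\q)\p$ in a way that I will show contradicts $Ra+R\sigma^m(a)=R$ for a suitable $m$ (namely $m=n+1$ or similar), because a common irreducible factor of $a$ and $\sigma^m(a)$ would obstruct the coprimality. Hence every $V_\p$ is simple.

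For the converse, assume $R$ is not a field and $A$ is not simple; I must produce a non-simple $V_\p$. If some $\mathcal G$-orbit in $\Irr(R)$ is finite, then by the remark following \cref{lemma:sub_condition_new} (or \cref{P:modules-of-finite-len}) all $R$-free modules have infinite length, so in particular $V_1$ is not simple (it has a proper nonzero submodule generated by a full finite-orbit product, via \cref{cor:max_subs}\cref{cor:max_subs:fin:not-p-and-q}(i) or (ii)); so assume all orbits are infinite. Then the failure of simplicity of $A$ must come from one of conditions (a), (b), (c) of \cref{R:general-facts-GWA}\cref{R:general-facts-GWA:simple}: condition (a) holds since $A$ is a domain ($a\neq 0$, $R$ a domain), and I claim (b) holds automatically because $R$ is a PID with all $\mathcal G$-orbits infinite — a proper nonzero $\sigma$-ideal $\langle r\rangle$ would force $\sigma$ to permute the finitely many irreducible factors of $r$, giving a finite orbit, contradiction. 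So (c) must fail: there is $n\geq 1$ with $Ra+R\sigma^n(a)\neq R$, i.e.\ $a$ and $\sigma^n(a)$ share an irreducible factor $\p_0$; equivalently $\sigma^{-n}(a)$ and $a$ share the factor $\q_0:=\sigma^{-n}(\p_0)$, so $\p_0\mid a$ and $\q_0\mid a$ with $\sigma^n(\q_0)=\p_0$ and $n\geq 1>0$. I would then choose $\p$ to be a divisor of $a$ so arranged that $\p_0\mid\p$ and $\q_0\mid\q=\sigma(a/\p)$ — concretely, take $\p_0$ and $\q_0$ to be "placed" on opposite sides: set $\p:=\p_0\cdot(\text{the part of }a\text{ in orbits other than }\orb(\p_0)\text{, appropriately split})$, more carefully let $\p$ consist of $\p_0$ together with whatever is needed so that $\sigma(a/\p)$ is divisible by $\q_0$; since $\q_0\sigma(\p_0)\cdots$ — I need $\q_0\mid\sigma(a/\p)$, i.e.\ $\sigma^{-1}(\q_0)\mid a/\p$, i.e.\ $\sigma^{-1}(\q_0)\mid a$ (true, as $\sigma^{-1}(\q_0)=\sigma^{-n-1}(\p_0)$ and $a$'s orbit-part can be chosen large enough — here I may need to pass to a power $a$ or note $a$ already has these factors with sufficient multiplicity; if not, replace $n$ by the minimal such and track multiplicities). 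With $\p$ so chosen, condition (2) of \cref{cor:simplicity} fails since $\sigma^n(\q_0)=\p_0\in\Irr(\p)$ with $n\geq 1$ positive, so $V_\p$ is not simple. The delicate point, and the one I expect to be the main obstacle, is this last bookkeeping: ensuring that the shared irreducible factor of $a$ and $\sigma^n(a)$ can actually be split between $\p$ and $\q$ with correct multiplicities — one must handle the case where $a$'s factor appears with multiplicity one and also where $\q_0$ and $\p_0$ happen to coincide or be close in the orbit, possibly by choosing $n\geq 1$ minimal and invoking \cref{L:basic-submodules} to directly exhibit the basic submodule rather than re-deriving it from \cref{cor:simplicity}.
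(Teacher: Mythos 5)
Your skeleton is the paper's (simplicity criterion \cref{cor:simplicity} on one side, the GWA simplicity criterion of \cref{R:general-facts-GWA}\cref{R:general-facts-GWA:simple} on the other, with one implication taken in contrapositive form), but two steps as written would fail. First, for ``$A$ simple $\Rightarrow$ all $\mathcal G$-orbits in $\Irr(R)$ infinite'' your final plan is to ``show directly from (c)'' that a finite orbit is impossible; this cannot work, since an irreducible $z$ with finite orbit need not be related to $a$ at all, so no computation with $Ra+R\sigma^m(a)$ detects it. The right tool is condition (b): if $|\orb([z])|=\ell<\infty$, the orbit product $\theta=z\sigma(z)\cdots\sigma^{\ell-1}(z)$ satisfies $\sigma(\theta)\sim\theta$, so $\langle\theta\rangle$ is a proper nonzero $\sigma$-ideal of $R$ --- exactly your first instinct, which you then abandoned; the caveat about $\sigma$ having infinite order is irrelevant to it. Moreover, the route you dismiss as circular is in fact the paper's and is not circular: condition (b) alone gives, via \cref{L:subm-Vp-units}, that $V_1$ is simple, hence a nonzero finite-length $R$-free module exists, and then \cref{P:modules-of-finite-len} yields $R^{<\infty}=R^\times\cup\{0\}$, hence infinite orbits; the simplicity of $V_1$ is derived from (b), not presupposed.

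Second, there is an off-by-one in your construction for the converse. From $\p_0\mid a$ and $\p_0\mid\sigma^n(a)$, $n\geq1$, you set $\q_0=\sigma^{-n}(\p_0)$ and then need $\sigma^{-1}(\q_0)=\sigma^{-n-1}(\p_0)\mid a$, which is not known; your proposed remedies (``pass to a power of $a$'', choose $n$ minimal) do not supply it, and no multiplicity issue is actually present. The cure is the index shift the paper makes: what divides $a$ is $\sigma^{-n}(\p_0)$, and since the orbit is infinite and $n\geq 1$ we have $\sigma^{-n}(\p_0)\not\sim\p_0$, so one may take $\p:=a/\sigma^{-n}(\p_0)$; then $\q=\sigma(a/\p)=\sigma^{-n+1}(\p_0)=:\q_0$, and $\sigma^{n-1}(\q_0)=\p_0\in\Irr(\p)$ with $n-1\geq0$, so \cref{L:basic-submodules} (or \cref{cor:max_subs}\cref{cor:max_subs:fin:basic}) exhibits the proper nonzero submodule $\langle\ca{P}(\q_0,\p_0)\rangle$ of $V_\p$. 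Finally, when listing what can fail if $A$ is not simple you omit condition (d); note that $R$ being a PID and not a field gives $\Irr(R)\neq\varnothing$, so ``all orbits infinite'' already forces $\sigma$ to have infinite order. With these repairs your argument closes and coincides, up to contraposition, with the paper's proof.
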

\begin{proof}
Assume that $V_\p$ is simple, for all $\p\mid a$. So $A$ has at least one nonzero $R$-free module of finite length (e.g.\ $V_1$ is simple). If $\sigma^n=\id_R$ for some $n\neq 0$, then $R=R^{<\infty}$ is a field, by \cref{P:modules-of-finite-len}, so we can assume that $\sigma$ has infinite order. Suppose that $I$ is a $\sigma$-ideal of $R$. Then (see \cref{L:subm-Vp-units}) $I$ is a submodule of $V_1$, so either $I=\langle 0\rangle$ or $I=R$. Lastly, let $n \geq 1$ and set $g=\gcd\{a, \sigma^n(a)\}$. As $R$ is a PID, we have $Ra + R\sigma^n(a)=\langle g\rangle$. If $g\notin R^\times$ then we can choose some $\p\in\Irr(g)$, and by \cref{P:modules-of-finite-len} we know that $\orb(\p)$ is infinite. As $\p\mid a$, we can consider the module $V_\p$, which by hypothesis is simple, and we have $a=\p\sigma^{-1}(\q)$. We also have $\p\mid \sigma^n(a)$, so $\sigma^{-n}(\p)\mid a$ and $\sigma^{-n}(\p)\not\sim \p$ because $n\neq 0$. So $\sigma^{-n}(\p)\mid \sigma^{-1}(\q)$, which says that $\q_0:=\sigma^{-n+1}(\p)\in \Irr(\q)$. Moreover, $\sigma^{n-1}(\q_0)=\p$ and $n-1\geq 0$ imply that $\q_0\lleq \p$ and $\langle\ca{P}(\q_0, \p)\rangle$ is a proper nonzero submodule of $V_\p$, which is a contradiction. Thus $g\in R^\times$ and $Ra + R\sigma^n(a)=R$. By \cref{R:general-facts-GWA}\cref{R:general-facts-GWA:simple}, $A$ is simple.

Conversely, suppose first that $R$ is a field. Then, for any $\p\mid a$, the module $V_\p={}_R R$ is simple as an $R$-module, hence simple as an $A$-module. So we can assume that $R$ is not a field and that $A$ is simple. In particular, $R$ has no proper nonzero $\sigma$-stable ideals, which by \cref{L:subm-Vp-units} means that $V_1$ is simple, hence of finite length. It follows from the preceding argument and \cref{P:modules-of-finite-len} that all $\mathcal G$-orbits in $\Irr(R)$ are infinite. 

Let $\p\mid a$ and suppose that there exist $\p_0\in\Irr(\p)$ and $\q_0\in\Irr(\q)$ such that $\sigma^{n}(\q_0)=\p_0$, for some $n\in\bb{Z}$. Since $a=\p\sigma^{-1}(\q)$, we conclude that $\p_0=\sigma^{n}(\q_0)$ divides $\sigma^{n+1}(a)=\sigma^{n+1}(\p)\sigma^{n}(\q)$. So, as $\p_0\mid a$, we have
\[
Ra+R\sigma^{n+1}(a)\subseteq\langle \p_0\rangle\subsetneq R.
\]
By the simplicity of $A$ (see \cref{R:general-facts-GWA}\cref{R:general-facts-GWA:simple}), we must have $n+1<1$, i.e.\ $n<0$. Thus, by \cref{cor:simplicity}, $V_\p$ is simple.
\end{proof}

\begin{ex}\label{Ex:kleinian-simple-simple}
Fix $T(h)\in\F[h]$ with $\chara(\F)=0$ and consider the algebra $A$ generated by $x, y, h$ with defining relations
\begin{equation*}
    [h,y] = y, \quad [h,x] = -x, \quad xy=T(h) \quad \text{and}\quad yx=T(h-1),
\end{equation*}
for some $T\in\F[h]$. As argued in \cref{Eg:sl2-prim-quot-as-GWA}, $A$ is a GWA over $\F[h]$ which is simple provided that $T\neq 0$ and that there are no roots $\lambda, \mu$ of $T$ with $\lambda-\mu\in\bb{Z}_{>0}$. This occurs for example in case $T(h)=h^n$ or in case $T(h)=\prod_{i=1}^{n+1}\left(h+i/(n+1)\right)$, for some $n\in\bb{Z}_{>0}$. The latter choice of $T$ corresponds exactly to the algebra of invariants of the Weyl algebra under the action of the cyclic group of order $n+1$ and is known as a \textit{noncommutative deformation of type-$A$ Kleinian singularity} (see \cite{tH93, EKRS21}). Then \cref{thm:all_simple} shows that for the above choices of $T$, all $\F[h]$-free modules of rank $1$ are simple as $A$-modules.
\end{ex}

\subsection{Composition series, length and socles}\label{SS:PID:length}

Unless $R$ is a field, the regular module $V_\p={}_R R$ is not artinian, so it has infinite length as an $R$-module. We want to investigate when $V_\p$ has finite length as an $A$-module. \cref{P:modules-of-finite-len} gives a necessary condition for this to hold: all $\mathcal G$-orbits in $\Irr(R)$ must be infinite. We will show that this condition is also sufficient (compare \cite[Proposition 3.22]{FLM24}).

\begin{thm}
\label{thm:length}
Let $\p$ be a divisor of $a$. Then $V_\p$ has finite length if and only if
all $\mathcal G$-orbits in $\Irr(R)$ are infinite. In this case, the length of $V_\p$ is bounded above by $1+|\Omega_\p|$, where
\begin{align*}
\Omega_\p:=\{(z,w) \in \Irr(\q) \times \Irr(\p) \mid z \lleq w\} 
\end{align*}
with its cardinality $|\Omega_\p|$ considered in the sense of multisets.
\end{thm}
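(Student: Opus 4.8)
The plan is to treat the two directions separately. For the \emph{only if} direction: if $V_\p={}_RR$ has finite length, then $A$ has a nonzero $R$-free module of finite length, so \cref{P:modules-of-finite-len} applies. If some irreducible $z$ had a finite $\mathcal G$-orbit, say $\sigma^{\ell}(z)=uz$ with $u\in R^\times$ and $\ell\ge1$, then (as $R$ is commutative) $z$ would lie in $R^{<\infty}$ but not in $R^\times\cup\{0\}$, contradicting \cref{P:modules-of-finite-len}. Hence all $\mathcal G$-orbits in $\Irr(R)$ are infinite.

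For the \emph{if} direction together with the bound, I would argue by strong induction on the nonnegative integer $m=|\Omega_\p|$, proving simultaneously that $V_\p$ has finite length and that $\ell(V_\p)\le 1+|\Omega_\p|$. If $m=0$, I would invoke \cref{cor:simplicity}: its condition~(1) is the standing hypothesis, and its condition~(2) holds precisely because $\Omega_\p=\varnothing$ forces $\p_0\llneq\q_0$ for every pair of irreducible factors $\p_0$ of $\p$ and $\q_0$ of $\q$ lying in a common $\mathcal G$-orbit; thus $V_\p$ is simple and $\ell(V_\p)=1$. If $m\ge1$, I would pick a pair $(\q_0,\p_0)\in\Omega_\p$ for which the exponent $n\ge0$ with $\p_0\sim\sigma^{n}(\q_0)$ (well defined, since orbits are infinite) is as small as possible. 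The crucial point is that the basic submodule $M:=\langle\ca{P}(\q_0,\p_0)\rangle$ is then \emph{maximal}: by \cref{cor:max_subs}\cref{cor:max_subs:fin:basic}, a failure of maximality would produce $0\le i\le j\le n$ with $j-i<n$, $\sigma^{i}(\q_0)\in\Irr(\q)$ and $\sigma^{j}(\q_0)\in\Irr(\p)$, i.e.\ a pair in $\Omega_\p$ with strictly smaller exponent, contradicting the choice of $n$. So $V_\p/M$ is simple, and by \cref{L:basic-submodules}, $M\simeq V_{\p'}$ where $\Irr(\p')$ is obtained from $\Irr(\p)$ by replacing one copy of $[\p_0]$ with $[\sigma^{-1}(\q_0)]$, and $\Irr(\q')$ from $\Irr(\q)$ by replacing one copy of $[\q_0]$ with $[\sigma^{n+1}(\q_0)]$.

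Next I would prove $|\Omega_{\p'}|\le|\Omega_\p|-1$ by a direct multiset count. Writing $\Irr(\q)=\{[\q_0]\}\sqcup Q$ and $\Irr(\p)=\{[\p_0]\}\sqcup P$ and expanding both $|\Omega_\p|$ and $|\Omega_{\p'}|$ over these decompositions, exactly three groups of summands change: the single term $[\q_0\lleq\p_0]=1$ becomes $[\sigma^{n+1}(\q_0)\lleq\sigma^{-1}(\q_0)]=0$ (equality would force $\sigma^{n+2}(\q_0)\sim\q_0$, impossible in an infinite orbit); for each $w\in P$ the term $[\q_0\lleq w]$ becomes $[\sigma^{n+1}(\q_0)\lleq w]$, which is no larger since $\q_0\lleq\sigma^{n+1}(\q_0)$ and $\lleq$ is transitive; and for each $z\in Q$ the term $[z\lleq\p_0]$ becomes $[z\lleq\sigma^{-1}(\q_0)]$, again no larger since $\sigma^{-1}(\q_0)\lleq\p_0$. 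Summing, $|\Omega_\p|-|\Omega_{\p'}|\ge1$. By the induction hypothesis, $V_{\p'}$ has finite length with $\ell(V_{\p'})\le 1+|\Omega_{\p'}|\le|\Omega_\p|$; since $V_\p/M$ is simple, $V_\p$ has finite length and $\ell(V_\p)=1+\ell(M)\le 1+|\Omega_\p|$, closing the induction.

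The step I expect to be the real obstacle is the choice of which basic submodule to split off: it must be maximal (so that exactly one composition factor is peeled off) while at the same time forcing a genuine decrease of $|\Omega|$. Minimizing the exponent $n$ over all pairs in $\Omega_\p$ achieves both, but confirming this requires carefully matching the maximality criterion in \cref{cor:max_subs}\cref{cor:max_subs:fin:basic} against the definition of $\Omega_\p$, and keeping the multiset bookkeeping honest in the difference $|\Omega_\p|-|\Omega_{\p'}|$ — in particular noting that the new factors $\sigma^{-1}(\q_0)$ of $\p'$ and $\sigma^{n+1}(\q_0)$ of $\q'$ sit, respectively, strictly earlier and strictly later in their orbit than the factors they replace. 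Everything else — \cref{cor:simplicity} for the base case and \cref{P:modules-of-finite-len} for necessity — is immediate.
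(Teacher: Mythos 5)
Your proof is correct and follows essentially the same route as the paper: necessity via \cref{P:modules-of-finite-len}, then induction on $|\Omega_\p|$ with base case \cref{cor:simplicity}, peeling off a maximal basic submodule $\langle\ca{P}(\q_0,\p_0)\rangle\simeq V_{\p'}$ via \cref{cor:max_subs} and \cref{L:basic-submodules}, and showing $|\Omega_{\p'}|\le|\Omega_\p|-1$. The only (harmless) difference is bookkeeping: the paper takes an arbitrary maximal submodule and builds an explicit partial correspondence $\Omega_\p\to\Omega_{\p'}$ (yielding the exact drop in terms of multiplicities), whereas you fix the pair of minimal exponent to force maximality and get the needed inequality by a term-by-term comparison using transitivity of $\lleq$.
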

\begin{proof}
As argued above, we just need to consider the case that all $\mathcal G$-orbits in $\Irr(R)$ are infinite. Since in this case the multiset $\Omega_\p$ is finite, it will suffice to prove the upper bound property $\ell(V_\p)\leq 1+|\Omega_\p|$ where $\ell(\cdot)$ denotes length, which we will do by induction on $|\Omega_\p|$.

Note first that the relation $\lleq$ is reflexive, in the sense that if $z \lleq w$ and $w \lleq z$, then $z\sim w$, because we are in the case of infinite $\mathcal G$-orbits. However, some care is needed as we are dealing with multisets $\Irr(\p)$ and $\Irr(\q)$. Suppose, for example, that $\p_0\in\Irr(\p)$ occurs with multiplicity $2$ as an irreducible factor of $\p$. Then there is another element $\p_0'\in\Irr(\p)\setminus\{\p_0\}$ such that $\p_0 \sim \p_0'$, although we consider these to be \textit{distinct} factors. This situation will appear in this proof, where we will write $\p_0\neq \p_0'\sim\p_0$ (and similarly for $\q_0$).

To begin the induction, note that \cref{cor:simplicity} says that $V_\p$ is simple (which means length $1$) if and only if $|\Omega_\p|=0$, so the claim holds in this case. Thus we can assume that $V_\p$ is not simple and that $\ell(V_{\p'})\leq 1+|\Omega_{\p'}|$ for all $\p'$ with $|\Omega_{\p'}|<|\Omega_{\p}|$.

Let $W$ be any maximal submodule of $V_\p$. By \cref{cor:max_subs}, 
\[W=\langle \ca{P}(\q_0,\p_0)\rangle\]
where $(\q_0,\p_0)\in\Omega_{\p}$ with $\p_0=\sigma^n(\q_0)$, for some $n\geq 0$, such that there are no $0\leq i\leq j\leq n$ with $j-i<n$ and $(\sigma^i(\q_0),\sigma^j(\q_0))\in\Omega_{\p}$. For easy reference within this proof, we refer to this as the \textit{minimality condition} for $(\q_0,\p_0)$ in $\Omega_{\p}$. By \cref{L:basic-submodules}, we have $W \simeq V_{\p'}$ where $\p'= \sigma^{-1}(\q_0)\frac{\p}{\p_0}$ and $\q'=\sigma(\p_0)\frac{\q}{\q_0}$. 

We can now set up a partial correspondence between the sets $\Omega_{\p}$ and $\Omega_{\p'}$ noting that, as multisets,
\[\Irr(\p')=(\Irr(\p) \setminus \{\p_0\}) \cup \{\sigma^{-1}(\q_0)\} \quad \text{and}\quad \Irr(\q')=(\Irr(\q) \setminus \{\q_0\}) \cup \{\sigma(\p_0)\},\]
with $\p_0\not\sim\sigma^{-1}(\q_0)$ (equivalently, $\q_0\not\sim\sigma(\p_0)$).

Let $(z,w)\in\Omega_{\p}$. We consider the following cases:
\begin{description}
\item[($z=\q_0$ and $w=\p_0$):] Then the pair $(z,w)$ has no corresponding pair in $\Omega_{\p'}$.
\item[($z\neq\q_0$ and $w\neq\p_0$):] Then $(z,w)\in \Omega_{\p'}$.
\item[($z=\q_0$ and $w\neq\p_0$):] Since $(\q_0,\p_0)$ is minimal in $\Omega_{\p}$ and $(\q_0,w)\in\Omega_{\p}$, it follows that $\p_0\lleq w$. There are two possibilities:
\begin{enumerate}[label=(\roman*)]
\item If $\p_0\llneq w$, then the pair $(\q_0,w)\in\Omega_{\p}$ corresponds to the pair $(\sigma(\p_0), w)$ in $\Omega_{\p'}$.
\item If $\p_0\sim w$, then $(\q_0,w)$ has no corresponding pair in $\Omega_{\p'}$. Thus, if $m\geq 1$ is the multiplicity of $\p_0$ in $\p$, then we loose the $m-1$ pairs of the form $(\q_0, w)$, with $\p_0\neq w\sim \p_0$. 
\end{enumerate}
\item[($z\neq\q_0$ and $w=\p_0$):] This case is entirely analogous to the previous one so we just sketch it. The minimality of $(\q_0,\p_0)$ in $\Omega_{\p}$ implies that $z\lleq \q_0$. There are two possibilities:
\begin{enumerate}[label=(\roman*)]
\item If $z\llneq \q_0$, then the pair $(z,\p_0)\in\Omega_{\p}$ corresponds to the pair $(z, \sigma^{-1}(\q_0))$ in $\Omega_{\p'}$.
\item If $\q_0\sim z$, then we loose the $n-1$ pairs of the form $(z,\p_0)$, with $\q_0\neq z\sim \q_0$, where $n\geq 1$ is the multiplicity of $\q_0$ in $\q$.
\end{enumerate}
\end{description}
It is easy to verify that the above partial correspondence is a bijection onto $\Omega_{\p'}$, so that $|\Omega_{\p'}|=|\Omega_{\p}|-1-(m-1)-(n-1)\leq |\Omega_{\p}|-1$, because $m, n\geq 1$. Hence, using the induction hypothesis, we get
\begin{align*}
\ell(V_\p)&= 1+\ell(V_{\p'})\leq 2+|\Omega_{\p'}|\leq 2+|\Omega_{\p}|-1=1+|\Omega_{\p}|.
\end{align*}
\end{proof}

\begin{ex}\label{Ex:comp-series-with-diag}
Consider the case where $R$ is a PID with infinite $\mathcal G$-orbits in $\Irr(R)$, $z\in\Irr(R)$, $\q=z^2$ and $\p=(\sigma^2(z))^2\sigma^3(z)$, so $a=(\sigma^{-1}(z))^2(\sigma^2(z))^2\sigma^3(z)$. Write $\Irr(\q)=\{\q_0, \q_0'\}$, with $\q_0=z$ and $\q_0'=z$, and $\Irr(\p)=\{\p_0, \p_0', \p_1\}$, with $\p_0=\sigma^2(z)$, $\p_0'=\sigma^2(z)$ and $\p_1=\sigma^3(z)$. 

We illustrate this situation below, where red is used for the factors of $\p$ and blue is used for the factors of $\q$. We see that  $|\Omega_{\p}|=6$.
\begin{center}
\begin{tikzpicture}[%
qirr/.style={rectangle, draw=black, fill=blue!80, very thick, minimum size=9mm, text=white}, pirr/.style={rectangle, draw=black, fill=red!80, very thick, minimum size=9mm, text=black}
]
    \draw[-] (-1.5,0) -- (4.5,0) node[below] {};
    \foreach \x in {-1,0,...,4}
        \draw (\x,0.1) -- (\x,-0.1) node[below] {\x};
   \node[qirr] at (0, 0.45) {$\q_0'$};
   \node[qirr] at (0, 1.35) {$\q_0$};
   \node[pirr] at (2, 0.45) {$\p_0'$};
   \node[pirr] at (2, 1.35) {$\p_0$};
   \node[pirr] at (3, 0.45) {$\p_1$};
   \node[align = left, below] at (1.5,-.5) {$\Omega_{\p}=\{(\q_0, \p_0), (\q_0, \p_0'), (\q_0, \p_1), (\q_0', \p_0), (\q_0', \p_0'), (\q_0', \p_1)\}$};
\end{tikzpicture}
\end{center}

Choose the pair $(\q_0, \p_0)$, which defines a maximal submodule $\ca{P}(\q_0,\p_0)$ isomorphic to $V_{\p'}$, where $\p'= \sigma^{-1}(\q_0)\frac{\p}{\p_0}$ and $\q'=\sigma(\p_0)\frac{\q}{\q_0}$. We obtain the following diagram. (Notice that, in the language of the proof of \cref{thm:length}, we have $a=2=b$, so we know from the proof that $|\Omega_{\p'}|=6-a-b+1=3$.)
\begin{center}
\begin{tikzpicture}[%
qirr/.style={draw, rectangle, draw=black, fill=blue!80, very thick, minimum size=9mm, text=white}, pirr/.style={draw, rectangle, draw=black, fill=red!80, very thick, minimum size=9mm, text=black}, girr/.style={draw, rectangle, draw=gray, dashed, fill=gray!40, very thick, minimum size=9mm, text=black}
]
    \draw[-] (-1.5,0) -- (4.5,0) node[below] {};
    \foreach \x in {-1,0,...,4}
        \draw (\x,0.1) -- (\x,-0.1) node[below] {\x};
   \node[qirr] at (0, 0.45) {$\q_0'$};
   \node[girr] (qo) at (0, 1.35) {$\q_0$};
   \node[pirr] at (2, 0.45) {$\p_0'$};
   \node[girr] (po) at (2, 1.35) {$\p_0$};
   \node[pirr] at (3, 0.45) {$\p_1$};
   \node[pirr] (lqo) at (-1, 0.45) {{\scalebox{.55}{$\sigma^{-1}(\q_0)$}}};
   \node[align = left, below] at (1.5,-.5) {$\Omega_{\p'}=\{(\q_0', \p_0'), (\q_0', \p_1), (\sigma(\p_0), \p_1)\}$};
   \node[qirr] (rpo) at (3, 1.35) {{\scalebox{.55}{$\sigma(\p_0)$}}};
   \draw[bend right=45,->]  (qo.west) to (lqo.north); 
   \draw[bend left=60,->]  (po.north) to (rpo.north); 
\end{tikzpicture}
\end{center}
Proceeding with the pair $(\q_0', \p_0')\in\Omega_{\p'}$, we obtain the following diagram with $\p''= \sigma^{-1}(\q_0')\frac{\p'}{\p_0'}$.
\begin{center}
\begin{tikzpicture}[%
qirr/.style={draw, rectangle, draw=black, fill=blue!80, very thick, minimum size=9mm, text=white}, pirr/.style={draw, rectangle, draw=black, fill=red!80, very thick, minimum size=9mm, text=black}, girr/.style={draw, rectangle, draw=gray, dashed, fill=gray!40, very thick, minimum size=9mm, text=black}
]
    \draw[-] (-1.5,0) -- (4.5,0) node[below] {};
    \foreach \x in {-1,0,...,4}
        \draw (\x,0.1) -- (\x,-0.1) node[below] {\x};
   \node[girr] (qol) at (0, 0.45) {$\q_0'$};
   \node[girr] (pol) at (2, 0.45) {$\p_0'$};
   \node[pirr] at (3, 0.45) {$\p_1$};
   \node[pirr] (lqo) at (-1, 0.45) {{\scalebox{.55}{$\sigma^{-1}(\q_0)$}}};
   \node[pirr] (lqol) at (-1, 1.35) {{\scalebox{.55}{$\sigma^{-1}(\q_0')$}}};
   \node[qirr] (rpo) at (3, 1.35) {{\scalebox{.55}{$\sigma(\p_0)$}}};
    \node[qirr] (rpol) at (3, 2.25) {{\scalebox{.55}{$\sigma(\p_0')$}}};
   \draw[bend right=45,->]  (qol.north) to (lqol.east); 
   \draw[bend left=30,->]  (pol.north) to (rpol.west); 
   \node[align = left, below] at (1.5,-.5) {$\Omega_{\p''}=\{(\sigma(\p_0), \p_1), (\sigma(\p_0'),\p_1)\}$};
\end{tikzpicture}
\end{center}
Finally, taking the pair $(\sigma(\p_0), \p_1)$, we obtain the last diagram with $\p'''= \p_0\frac{\p''}{\p_1}$.
\begin{center}
\begin{tikzpicture}[%
qirr/.style={draw, rectangle, draw=black, fill=blue!80, very thick, minimum size=9mm, text=white}, pirr/.style={draw, rectangle, draw=black, fill=red!80, very thick, minimum size=9mm, text=black}, girr/.style={draw, rectangle, draw=gray, dashed, fill=gray!40, very thick, minimum size=9mm, text=black}
]
    \draw[-] (-1.5,0) -- (4.5,0) node[below] {};
    \foreach \x in {-1,0,...,4}
        \draw (\x,0.1) -- (\x,-0.1) node[below] {\x};
   \node[pirr] (po) at (2, 0.45) {$\p_0$};
   \node[girr] (p1)at (3, 0.45) {$\p_1$};
   \node[pirr] (lqo) at (-1, 0.45) {{\scalebox{.55}{$\sigma^{-1}(\q_0)$}}};
   \node[pirr] (lqol) at (-1, 1.35) {{\scalebox{.55}{$\sigma^{-1}(\q_0')$}}};
   \node[girr] (rpo) at (3, 1.35) {{\scalebox{.55}{$\sigma(\p_0)$}}};
    \node[qirr] (rpol) at (3, 2.25) {{\scalebox{.55}{$\sigma(\p_0')$}}};
    \node[qirr] (rp1) at (4, 0.45) {{\scalebox{.55}{$\sigma(\p_1)$}}};
   \draw[bend right=45,->]  (rpo.west) to (po.north); 
   \draw[bend left=30,->]  (p1.north) to (rp1.north); 
   \node[align = left, below] at (1.5,-.5) {$\Omega_{\p'''}=\varnothing$};
\end{tikzpicture}
\end{center}
It follows that $V_{\p'''}$ is simple and $\ell(V_{\p})=4$, where 
\begin{align*}
V_{\p}\supsetneq \langle g'\rangle \supsetneq \langle g''\rangle\supsetneq \langle g'''\rangle\supsetneq \{0\}
\end{align*}
is a composition series with $\langle g^{(i)}\rangle\simeq V_{\p^{(i)}}$, for $i=1, 2, 3$.
\end{ex}

\begin{rmk}\label{R:algorithm-4-com-ser}
Assume the conditions of \cref{thm:length}.
The proof of this result shows that, in case $a$ is square-free, then we have the equality $\ell(V_\p)=1+|\Omega_\p|$. Moreover, it also indicates an algorithm for producing a composition series of $V_\p$: pick a pair $(\q_0,\p_0)\in\Omega_{\p}$ satisfying the minimality condition for $\q_0\lleq\p_0$. Set $\p'=\sigma^{-1}(\q_0)\frac{\p}{\p_0}$. Then $V_{\p'}$ embeds as a maximal submodule of $V_\p$, which for simplicity we just write as $V_{\p'} \subsetneq V_\p$. Replace $\p$ with $\p'$ and $\q$ with $\q'=\sigma(\p_0)\frac{\q}{\q_0}$ and repeat. This process terminates after at most $|\Omega_\p|$ steps, and we get a composition series (taking isomorphism classes)
\[ \{0\} \subsetneq V_{\p^{(\ell-1)}} \subsetneq \cdots \subsetneq  V_{\p''} \subsetneq  V_{\p'} \subsetneq V_\p.\] 
\end{rmk}

The next result shows that the final configuration for $\p^{(\ell-1)}$ in the algorithm described in \cref{R:algorithm-4-com-ser} is unique and independent of the choices made along the way. See also \cref{Ex:fast-comput-hat-p} for a more direct explanation of this.

\begin{cor}
Assume that all $\mathcal G$-orbits in $\Irr(R)$ are infinite and
let $\p$ be a divisor of $a$. Then $V_\p$ has a unique simple submodule
\[\soc(V_\p) \simeq V_{\hat{\p}},\]
where $\hat{\p}$ can be computed combinatorially as in \cref{R:algorithm-4-com-ser} or \cref{Ex:fast-comput-hat-p} below.
\end{cor}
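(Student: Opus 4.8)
The plan is to deduce all three assertions---the existence of a simple submodule, its uniqueness, and its identification with the combinatorially defined $V_{\hat\p}$---from results already established. First I would appeal to \cref{thm:length}: since all $\mathcal G$-orbits in $\Irr(R)$ are infinite, $V_\p$ has finite length. It is in particular a nonzero module of finite length, so $\soc(V_\p)\neq 0$, and \cref{L:radI-subm}\cref{L:radI-subm:d} then tells us that $\soc(V_\p)$ is simple. Equivalently, $V_\p$ has a unique minimal nonzero submodule, which is its socle; this disposes of the existence and uniqueness claims with no reference to any algorithm.

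Next I would identify this socle. Running the procedure of \cref{R:algorithm-4-com-ser} produces, as recorded there, a strictly decreasing chain of genuine $A$-submodules
\[\{0\}\subsetneq W_{\ell-1}\subsetneq\cdots\subsetneq W_1\subsetneq V_\p,\]
with $W_i\simeq V_{\p^{(i)}}$ and terminal term $W_{\ell-1}\simeq V_{\hat\p}$ simple (when $V_\p$ is already simple the algorithm stops immediately with $\hat\p=\p$, and the claim is trivial). Since this chain is a composition series, $W_{\ell-1}$ is a minimal nonzero submodule of $V_\p$; by the uniqueness established in the first step it must coincide with $\soc(V_\p)$. Hence $\soc(V_\p)=W_{\ell-1}\simeq V_{\hat\p}$.

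Finally, for the implicit well-definedness of $\hat\p$, I would note that $\soc(V_\p)$ is intrinsic to $V_\p$ and so does not depend on the choices made while running \cref{R:algorithm-4-com-ser}: every output $\hat\p$ satisfies $V_{\hat\p}\simeq\soc(V_\p)$. Thus if $\hat\p_1$ and $\hat\p_2$ are two outputs (possibly one from each of \cref{R:algorithm-4-com-ser} and \cref{Ex:fast-comput-hat-p}), then $V_{\hat\p_1}\simeq V_{\hat\p_2}$, and \cref{C:rank1:iso}---applicable because $R$, being a PID, is commutative---forces $\hat\p_1$ and $\hat\p_2$ to be associates. So $\hat\p$ is well defined up to associates, matching the ambiguity already present in the notation $V_{\hat\p}$.

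I do not anticipate a real obstacle: each step is a direct consequence of an earlier result. The one place that calls for a little attention is the passage from ``the algorithm yields a composition series whose bottom term is simple'' to ``that bottom term is the socle''---this hinges both on the uniqueness in \cref{L:radI-subm}\cref{L:radI-subm:d} and on the fact, built into \cref{R:algorithm-4-com-ser}, that the $W_i$ are honest submodules of $V_\p$ rather than merely isomorphic to subquotients.
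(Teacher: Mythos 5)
Your proposal is correct and takes essentially the same route as the paper: both arguments rest on \cref{thm:length} (finite length under the infinite-orbit hypothesis) combined with \cref{L:radI-subm}\cref{L:radI-subm:d} to get the unique simple submodule $\soc(V_\p)$. The only difference is cosmetic: the paper identifies $\soc(V_\p)$ as some $V_{\hat\p}$ with $\Omega_{\hat\p}=\varnothing$ by noting that, $R$ being a PID, every nonzero submodule is $R$-cyclic (hence of the form $V_{\p'}$ by \cref{P:class-rk-1-general}), whereas you take the bottom term of the composition series of honest submodules produced by \cref{R:algorithm-4-com-ser} and invoke uniqueness of the simple submodule -- both are valid, and your version makes the link between $\hat\p$ and the combinatorial algorithm (and its independence of choices, up to associates via \cref{C:rank1:iso}) slightly more explicit than the paper's proof does.
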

\begin{proof}
It has already been remarked in \cref{L:radI-subm}\cref{L:radI-subm:d} that $\soc(V_\p)$ is simple, hence of the form $V_{\hat{\p}}$ with $\Omega_{\hat{\p}}=\varnothing$. This is because, $R$ being a PID, any nonzero submodule of $V_\p$ is isomorphic to ${}_RR$ as an $R$-module.
\end{proof}

\begin{ex}\label{Ex:fast-comput-hat-p}
Fix $V_\p$ as above. There is a faster way of obtaining the element $\hat{\p}$ such that $\soc(V_\p) \simeq V_{\hat{\p}}$. Note first that, since $a=\p\sigma^{-1}(\q)$, the irreducible factors of $a$ corresponding to $\q$ are those in $\sigma^{-1}(\Irr(\q))$. Making this shift, the pair $(\p', \sigma^{-1}(\q'))$ for which $V_{\p'}\simeq\langle\ca{P}(\q_0, \p_0)\rangle$ is obtained from the pair $(\p, \sigma^{-1}(\q))$ by moving the factor $\p_0$ from $\p$ to $\sigma^{-1}(\q)$ and the factor $\sigma^{-1}(\q_0)$ from $\sigma^{-1}(\q)$ to $\p$, thus obtaining $(\p', \sigma^{-1}(\q'))$. So, after making the shift to $\sigma^{-1}(\q)$, this process consists just of interchanging irreducible factors. In terms of the diagrams from \cref{Ex:comp-series-with-diag}, this is just switching colors from a minimal pair. This can be done one $\mathcal G$-orbit at a time, so we illustrate this on a single orbit using \cref{Ex:comp-series-with-diag}. Note that now, after the shift, we require of a pair in $\Omega_\p$ that a blue block must be strictly to the left of a red block. So the blocks don't change position, just color, and in the final configuration all the blue blocks must be to the right of the red blocks. There is a unique way to achieve this, as we illustrate now.
\begin{enumerate}[label=\arabic*.]
\item Consider the initial configuration from \cref{Ex:comp-series-with-diag} and shift the blue blocks one unit to the left, to obtain the following diagram.
\begin{center}
\begin{tikzpicture}[%
qirr/.style={rectangle, draw=black, fill=blue!80, very thick, minimum size=9mm, text=black}, pirr/.style={rectangle, draw=black, fill=red!80, very thick, minimum size=9mm, text=black}
]
    \draw[-] (-1.5,0) -- (4.5,0) node[below] {};
    \foreach \x in {-1,0,...,4}
        \draw (\x,0.1) -- (\x,-0.1) node[below] {\x};
   \node[qirr] at (-1, 0.45) {};
   \node[qirr] at (-1, 1.35) {};
   \node[pirr] at (2, 0.45) {};
   \node[pirr] at (2, 1.35) {};
   \node[pirr] at (3, 0.45) {};
\end{tikzpicture}
\end{center}

\item Now interchange the colors of the blue and red blocks so that there is no red block to the right of a blue block. There is a unique way of achieving this (ignoring the vertical order, as this doesn't change the prime decomposition of $\hat\p$), depicted below.
\begin{center}
\begin{tikzpicture}[%
qirr/.style={rectangle, draw=black, fill=blue!80, very thick, minimum size=9mm, text=black}, pirr/.style={rectangle, draw=black, fill=red!80, very thick, minimum size=9mm, text=black}
]
    \draw[-] (-1.5,0) -- (4.5,0) node[below] {};
    \foreach \x in {-1,0,...,4}
        \draw (\x,0.1) -- (\x,-0.1) node[below] {\x};
   \node[pirr] at (-1, 0.45) {};
   \node[pirr] at (-1, 1.35) {};
   \node[pirr] at (2, 0.45) {};
   \node[qirr] at (2, 1.35) {};
   \node[qirr] at (3, 0.45) {};
\end{tikzpicture}
\end{center}

\item This already shows that $\hat\p\sim (\sigma^{-1}(z))^2\sigma^2(z)$. To obtain a diagram which shows the same information as the final diagram in \cref{Ex:comp-series-with-diag} we just need to move the blue blocks back one unit to the right.
\begin{center}
\begin{tikzpicture}[%
qirr/.style={rectangle, draw=black, fill=blue!80, very thick, minimum size=9mm, text=black}, pirr/.style={rectangle, draw=black, fill=red!80, very thick, minimum size=9mm, text=black}
]
    \draw[-] (-1.5,0) -- (4.5,0) node[below] {};
    \foreach \x in {-1,0,...,4}
        \draw (\x,0.1) -- (\x,-0.1) node[below] {\x};
   \node[pirr] at (-1, 0.45) {};
   \node[pirr] at (-1, 1.35) {};
   \node[pirr] at (2, 0.45) {};
   \node[qirr] at (4, 0.45) {};
   \node[qirr] at (3, 0.45) {};
\end{tikzpicture}
\end{center}
\end{enumerate}
\end{ex}

\noindent Recall that, by definition, $\degg(r)=|\Irr(r)|$, for all nonzero $r\in R$.

\begin{cor}
Assume that all $\mathcal G$-orbits in $\Irr(R)$ are infinite and
let $\p$ be a divisor of $a$.
The length of $V_\p$ is bounded above by $1+\degg(\p) \cdot \degg(\q)$.
\end{cor}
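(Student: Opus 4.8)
The plan is to reduce immediately to the bound already established in \cref{thm:length}. Under the standing hypothesis that all $\mathcal{G}$-orbits in $\Irr(R)$ are infinite, that theorem gives $\ell(V_\p)\leq 1+|\Omega_\p|$, where
\[\Omega_\p=\{(z,w)\in\Irr(\q)\times\Irr(\p)\mid z\lleq w\}\]
is a finite multiset whose cardinality is computed with multiplicities. Thus it suffices to prove the purely combinatorial inequality $|\Omega_\p|\leq\degg(\q)\cdot\degg(\p)$.

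For this I would simply note that $\Omega_\p$ is, by its very definition, obtained from the multiset product $\Irr(\q)\times\Irr(\p)$ by discarding the pairs $(z,w)$ for which $z\not\lleq w$; hence $\Omega_\p$ is a sub-multiset of $\Irr(\q)\times\Irr(\p)$. Following the multiset conventions fixed in the introduction, where products and cardinalities are counted with multiplicity, one has $|\Irr(\q)\times\Irr(\p)|=|\Irr(\q)|\cdot|\Irr(\p)|$, and by the definition $\degg(r)=|\Irr(r)|$ this equals $\degg(\q)\cdot\degg(\p)$. Therefore $|\Omega_\p|\leq\degg(\q)\cdot\degg(\p)$, and combining with \cref{thm:length} yields $\ell(V_\p)\leq 1+\degg(\p)\cdot\degg(\q)$, as asserted.

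There is no genuine obstacle in this argument: it is a one-line consequence of \cref{thm:length} together with the trivial estimate $|\Omega_\p|\leq|\Irr(\q)\times\Irr(\p)|$. The only point deserving a moment's attention is the bookkeeping of multiplicities — for instance when $\p$ or $\q$ has repeated irreducible factors, so that several distinct elements of $\Omega_\p$ share the same pair of associate classes — but this is handled automatically by the convention that products of multisets and the cardinality $|\Omega_\p|$ are taken with multiplicity. (Note that the bound is typically far from sharp, since many pairs $(z,w)\in\Irr(\q)\times\Irr(\p)$ will fail the condition $z\lleq w$, or even lie in distinct $\mathcal{G}$-orbits; the estimate $1+|\Omega_\p|$ from \cref{thm:length} is the more refined one.)
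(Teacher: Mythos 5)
Your proposal is correct and follows exactly the route the paper intends: the corollary is an immediate consequence of \cref{thm:length} via the trivial multiset estimate $|\Omega_\p|\leq|\Irr(\q)\times\Irr(\p)|=\degg(\q)\cdot\degg(\p)$, which is precisely the inequality the paper itself invokes in the proof of the subsequent uniform bound $1+\left(\degg(a)/2\right)^2$. Your remark on multiplicity bookkeeping matches the paper's multiset conventions, so nothing is missing.
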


\begin{cor}
Assume that all $\mathcal G$-orbits in $\Irr(R)$ are infinite. 
Then all the objects in $\mathscr{C}_1$ have finite length as $A$-modules and this length is uniformly bounded above by 
\[ 1+\left(\frac{\degg(a)}{2}\right)^2.\]
\end{cor}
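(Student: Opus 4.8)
The plan is to deduce this at once from the two preceding corollaries together with a one-line optimization, so the argument is essentially bookkeeping. First I would reduce to the modules $V_\p$: by \cref{P:class-rk-1-general} every object of $\mathscr{C}_1$ is isomorphic as an $A$-module to $V_\p$ for some divisor $\p$ of $a$, hence it suffices to bound $\ell(V_\p)$ uniformly in $\p$. Since all $\mathcal{G}$-orbits in $\Irr(R)$ are infinite, \cref{thm:length} applies and yields finite length with $\ell(V_\p)\le 1+|\Omega_\p|$; as $\Omega_\p$ is a sub-multiset of $\Irr(\q)\times\Irr(\p)$, the previous corollary records the coarser bound $\ell(V_\p)\le 1+\degg(\p)\degg(\q)$, which is the form I want to use.

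Next I would relate the degrees to $\degg(a)$. Because $\q=\sigma(a/\p)$ we have $a=\p\,\sigma^{-1}(\q)$; since $\sigma\in\aut{R}$ preserves irreducibility we get $\degg(\sigma^{-1}(\q))=\degg(\q)$, and since $R$ is a UFD (in fact a PID) the function $\degg$ is additive on products, so $\degg(a)=\degg(\p)+\degg(\q)$. Writing $m=\degg(\p)$ and $n=\degg(\q)$, so that $m+n=\degg(a)$, the arithmetic--geometric mean inequality gives $mn\le\left(\frac{m+n}{2}\right)^2=\left(\frac{\degg(a)}{2}\right)^2$.

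Combining the two steps, $\ell(V_\p)\le 1+mn\le 1+\left(\frac{\degg(a)}{2}\right)^2$ for every divisor $\p$ of $a$, and this bound is independent of $\p$; pulling back along the isomorphism from \cref{P:class-rk-1-general} gives the asserted uniform bound for every object of $\mathscr{C}_1$. I do not expect any genuine obstacle here: the only points requiring (very light) care are that the finiteness of $\ell(V_\p)$ and the bound $1+\degg(\p)\degg(\q)$ have already been established, and that $\degg$ is well behaved under the automorphism $\sigma$ and under multiplication in the UFD $R$.
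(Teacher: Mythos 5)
Your proposal is correct and follows essentially the same route as the paper: reduce to $V_\p$, invoke \cref{thm:length} and the bound $\ell(V_\p)\le 1+\degg(\p)\degg(\q)$, and then maximize the product $\degg(\p)\degg(\q)$ subject to $\degg(\p)+\degg(\q)=\degg(a)$ (the paper phrases this via where the maximum is attained rather than AM--GM, but it is the same optimization). The extra details you spell out about additivity of $\degg$ and its invariance under $\sigma$ are correct and harmless.
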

\begin{proof}
We have $|\Omega_\p|\leq\degg(\p) \degg(\q)$. Since $\degg(a)=\degg(\p)+\degg(\q)$, the maximum value of $\degg(\p) \degg(\q)$ is attained for $\degg(\p)=\frac{\degg(a)}{2}$ when $\degg(a)$ is even, and for $\degg(\p)=\frac{\degg(a) \pm 1}{2}$ when $\degg(a)$ is odd.
\end{proof}

\begin{ex}
Take $A=R(\sigma, a)$ with $R=\F[h]$, $\chara(\F)=0$ and $\sigma(h)= h+1$. 
\begin{enumerate}[label=(\roman*)]
\item Suppose that $a=h^{2n}$, for some $n\in\bb Z_{\geq 0}$. Then, since $A$ is simple (compare \cref{Ex:kleinian-simple-simple}), it follows from \cref{thm:all_simple} that all $\F[h]$-free modules of rank $1$ are simple, so $V_{h^k}$ has length $1$ for all $0\leq k\leq 2n$.
\item For a contrasting example, suppose that $a=(h-1)^n(h+1)^n=(h^2-1)^{n}$, for some $n\in\bb Z_{> 0}$. Then, e.g.\ using the diagramatics of \cref{Ex:comp-series-with-diag,Ex:fast-comput-hat-p}, we see that $V_{(h+1)^n}$ has length $n+1$.
\end{enumerate} 
\end{ex}

Suppose now instead that some $\mathcal G$-orbit is finite, say $|\orb(z)|=n$, for some $z \in \Irr(R)$. Then all modules $V_\p$ will have infinite length, but we can still describe the possible maximal submodules and corresponding simple subquotients. So let $\langle g \rangle$ be a maximal submodule of $V_\p$. 
As a particular case of \cref{cor:max_subs,T:all-principal-submodules} we have the following, assuming the set-up in this paragraph. 
\begin{enumerate}[label=(\roman*)]
\item Suppose the hypotheses in \cref{cor:max_subs}\cref{cor:max_subs:fin:not-p-and-q} are satisfied. Then with $g= \prod_{i=1}^n \sigma^i(z)$ we have $x \cdot g = \sigma^{-1}(g) \p = ug \p$, where $u=\frac{z}{\sigma^n(z)}\in R^\times$. So it follows that $\langle g \rangle\simeq V_{u\p}\simeq V_{\p}^{\tau_u}$ (see~\cref{L:unit-twists}) and we have an infinite chain of nontrivial inclusions
\[  \cdots \hookrightarrow V_{u^2\p} \hookrightarrow V_{u\p} \hookrightarrow V_\p.\] 
\item Otherwise suppose that $\q_0=\sigma^{j}(z) \in \Irr(\q)$ and $\p_0=\sigma^{k}(z) \in \Irr(\p)$ with $0\leq k-j\leq n-2$ and $\sigma^{i}(z) \not\in \Irr(\p) \cup \Irr(q)$ for any $j<i<k$. Then $g\sim \ca{P}(q_0,p_0)$ generates a maximal submodule isomorphic to $V_{\p'}$ with $\p'= \sigma^{-1}(\q_0)\frac{\p}{\p_0}$.
\end{enumerate} 
This illustrates how to construct filtrations of $V_\p$ even in the infinite length case, but note that as usual, the set of subquotients appearing depends on the choice of filtration and is not uniquely determined by $V_\p$.

\begin{ex}
Let $R=\F[h]$ with $\sigma (h)= \xi h$, for $\xi\in\F$ a primitive $7$-th root of unity, so that $|\mathcal G|=7$. Let $a=(h+1)(\xi^3 h+1)$ and take $\p=\xi^3 h+1$, which gives $\q=\xi h+1$. Then 
$\langle g\rangle=\langle \ca{P}(\q,\p)\rangle = \langle (\xi h+1)(\xi^2 h+1)(\xi^3 h+1)\rangle$ is a maximal submodule of $V_{\p}$ isomorphic to $V_{\p'}$, where $\p'=\sigma^{-1}(\q)=h+1$ and $\q'=\xi^4h+1$. Now $\langle f\rangle=\langle \ca{P}(\q',\p')\rangle = \langle (\xi^4 h+1)(\xi^5 h+1)(\xi^6 h+1)(h+1)\rangle$ is a maximal submodule of $V_{\p'}$ isomorphic to $V_{\p''}$ where $\p''=\sigma^{-1}(\q')=\p$.

In this case we have a $2$-periodic sequence of maximal submodules:
\[  \cdots \hookrightarrow V_{\p'} \hookrightarrow V_\p \hookrightarrow V_{\p'} \hookrightarrow V_\p.\]  
\end{ex}

\subsection{Weight modules and composition factors}\label{SS:PID:weight-mods}

Recall (see e.g.~\cite{DGO96}) that an $A$-module $M$ is a weight module if 
\begin{align*}
M=\bigoplus_{\mathfrak m\in\Max(R)}M_{\mathfrak m}, 
\end{align*}
where $\Max(R)$ is the set of all maximal ideals of $R$ and $M_{\mathfrak m}=\pb{v\in M\mid \mathfrak m \cdot v=0}$. The support of $M$ is $\pb{\mathfrak m\in\Max(R)\mid M_{\mathfrak m}\neq 0}$. We say that $\mathfrak m\in\Max(R)$ is a \textit{break} if $a\in\mathfrak m$.

Presently, our goal is to show that any simple weight module which has finite support will appear as a composition factor of some $V_\p$; and conversely, if $V_\p$ has finite length, then all of its composition factors except for $\soc(V_\p)=V_{\hat\p}$ are simple weight modules with finite support. This shows that any abelian category containing the category of $A$-modules which are free of rank $1$ over $R$ will contain the category of weight modules of finite support, provided that $R$ is not a field (otherwise the concept of a weight module would be vacuous) and that there is at least one nonzero $R$-free module of finite length.

First, suppose that $\p\mid a$ and that $V_\p$ is not simple (so, in particular, $R$ is not a field, by \cref{thm:all_simple}). Let $\langle g\rangle$ be a maximal submodule of $V_\p$. So $0\neq g\notin R^\times$ and by \cref{cor:max_subs} we can write $g=\prod_{i=0}^n \sigma^i(z)$, for some $n\geq 0$ and $z\in\Irr(R)$ with $|\orb(z)|\geq n+1$. Thus, the elements $\sigma^i(z)$ are pairwise coprime and $\mathfrak m_i=\langle \sigma^i(z)\rangle$ is a maximal ideal of $R$, for $0\leq i\leq n$. We have, as $R$-modules,
\begin{align*}
V_\p/\langle g\rangle =R/\langle g\rangle\simeq R/ \mathfrak m_0\oplus \cdots \oplus R/ \mathfrak m_n=\seq{V_\p/\langle g\rangle}_{\mathfrak m_0} \oplus \cdots \oplus \seq{V_\p/\langle g\rangle}_{\mathfrak m_n},
\end{align*}
so $V_\p/\langle g\rangle$ is a weight module with (finite) support equal to $\pb{\mathfrak m_0, \ldots, \mathfrak m_n}$. To conclude, notice that $\langle g\rangle\simeq V_{\p'}$ for some $\p'\mid a$. If $V_{\p'}$ is not simple, we repeat the argument and, in case $V_\p$ has finite length, we stop when we reach $\soc(V_\p)=V_{\hat\p}$, concluding that all composition factors of $V_\p$ except for $V_{\hat\p}$ are weight modules with finite support.

Conversely, let $M$ be a simple weight module with finite support. We assume that $R$ is not a field, as in this case any $A$-module is a weight module with support $\{\langle 0\rangle\}$. Assume also that there exists some nonzero $R$-free $A$-module with a composition series. In particular, by \cref{P:modules-of-finite-len}, $|\orb(z)|=\infty$ for all $z\in\Irr(R)$. Let $\mathfrak m$ be in the support of $M$, so 
$\mathfrak m=\langle z\rangle$, for some $z\in\Irr(R)$. Since $M$ is simple, the support of $M$ is contained in the $\mathcal G$-orbit of $\mathfrak m$ (\cite[Proposition 1.5]{DGO96}). Using the language of~\cite{DGO96}, the $\mathcal G$-orbit of $\mathfrak m$ is linear. Set $\mathfrak m_i=\sigma^i(\mathfrak m)=\langle \sigma^i(z)\rangle$, for every $i\in\bb Z$. Without loss of generality we can assume that the support of $M$ is contained in $\pb{\mathfrak m_i\mid 0\leq i\leq n}$ and contains $\mathfrak m_0, \mathfrak m_n$. Since $xM_{\mathfrak m_i}\subseteq M_{\mathfrak m_{i-1}}$ and $yM_{\mathfrak m_i}\subseteq M_{\mathfrak m_{i+1}}$, it follows from the simplicity of $M$ that $M_{\mathfrak m_i}\simeq R/\mathfrak m_i$ as $R$-modules, for all $0\leq i\leq n$. Moreover, $a(1+\mathfrak m_n)=xy(1+\mathfrak m_n)\subseteq x M_{\mathfrak m_{n+1}}=\langle0\rangle$, so $a\in \mathfrak m_n$; similarly, using $\sigma(a)(1+\mathfrak m_0)=yx(1+\mathfrak m_0)=0$, we conclude that $\sigma(a)\in \mathfrak m_0$. In other words, $\sigma^{-1}(z)\mid a$ and $\sigma^{n}(z)\mid a$. By simplicity of $M$, there is no $0\leq i\leq n-1$ such that $\sigma^{i}(z)\mid a$ (or, in the language of \cite{DGO96}, $\mathfrak m_{-1}$ and $\mathfrak m_{n}$ are consecutive breaks).

Set $\p=\p_0=\sigma^{n}(z)\mid a$ and $\q = \sigma(a/\p)$, so that $a=\p\sigma^{-1}(\q)$, as usual. Take $\q_0=z$. Since $\sigma^{-1}(\q_0)\mid a$ and $\sigma^{-1}(\q_0)$ is coprime with $\p=\sigma^{n}(z)$, it follows that $\sigma^{-1}(\q_0)\mid \sigma^{-1}(\q)$, i.e., $\q_0\mid \q$. Take $g=\ca{P}(\q_0, \p_0)$. Then, by \cref{cor:max_subs}\cref{cor:max_subs:fin:basic}, $\langle g\rangle$ is a maximal submodule of $V_\p$ and $V_\p/\langle g\rangle\simeq R/ \mathfrak m_0\oplus \cdots \oplus R/ \mathfrak m_n$ as $R$-modules. In the case of linear orbits, up to isomorphism, there is a unique simple weight module $M$ with $M_{\mathfrak m_0}\neq 0$, so $M\simeq V_\p/\langle g\rangle$. 

We have thus proved the following result which generalizes \cite[Lemma 3.26, Remark 3.27]{FLM24}.

\begin{thm}
Suppose that $R$ is not a field and that all $\mathcal G$-orbits in $\Irr(R)$ are infinite. Then the following hold:
\begin{enumerate}[label=(\alph*)]
\item All composition factors of any rank $1$ free $R$-module $V_\p$ are simple weight modules with finite support, except for $\soc(V_\p)=V_{\hat\p}$.
\item Any simple weight module with finite support appears as a composition factor of some $V_\p$.
\end{enumerate}
\end{thm}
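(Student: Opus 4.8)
The plan is to prove both parts by feeding the explicit classification of maximal submodules from \cref{cor:max_subs} into a short induction, using \cref{thm:length} to guarantee that the relevant modules $V_\p$ have finite length.

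For part~(a), I would start with a non-simple $V_\p$ and fix a maximal submodule $\langle g\rangle$. Since all $\mathcal G$-orbits are infinite, \cref{cor:max_subs} leaves only one possibility: $g\sim\prod_{i=0}^{n}\sigma^{i}(z)$ for some $z\in\Irr(R)$ and $n\ge 0$. The factors $\sigma^{i}(z)$ generate distinct height-one primes $\mathfrak m_i=\langle\sigma^i(z)\rangle$, hence are pairwise coprime, so the Chinese Remainder Theorem gives $V_\p/\langle g\rangle\simeq\bigoplus_{i=0}^{n}R/\mathfrak m_i$ as $R$-modules. This identifies the simple quotient with a weight module whose support is the finite set $\{\mathfrak m_0,\dots,\mathfrak m_n\}$. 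To close the argument I would observe that $\langle g\rangle\simeq V_{\p'}$ for a suitable $\p'\mid a$ (by \cref{P:class-rk-1-general}, with $\p'$ read off from \cref{lemma:sub_condition}), invoke \cref{thm:length} so that $V_\p$ has finite length, and iterate: each composition factor other than the bottom one, which is $\soc(V_\p)=V_{\hat\p}$, arises in this way.

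For part~(b), I would begin with a simple weight module $M$ of finite support and pick $\mathfrak m=\langle z\rangle$ in that support. By \cite[Proposition~1.5]{DGO96} the whole support lies in the $\mathcal G$-orbit of $\mathfrak m$, and this orbit is \emph{linear} because it is infinite; after a shift, write the support inside $\{\mathfrak m_i=\sigma^i(\mathfrak m)\mid 0\le i\le n\}$ with $\mathfrak m_0$ and $\mathfrak m_n$ both present. Simplicity forces $M_{\mathfrak m_i}\simeq R/\mathfrak m_i$ for $0\le i\le n$; then $a\cdot M_{\mathfrak m_n}=xy\cdot M_{\mathfrak m_n}\subseteq x\cdot M_{\mathfrak m_{n+1}}=0$ and $\sigma(a)\cdot M_{\mathfrak m_0}=yx\cdot M_{\mathfrak m_0}\subseteq y\cdot M_{\mathfrak m_{-1}}=0$ yield $\sigma^{n}(z)\mid a$ and $\sigma^{-1}(z)\mid a$, while simplicity rules out $\sigma^i(z)\mid a$ for $0\le i\le n-1$. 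Setting $\p=\sigma^n(z)$, $\q_0=z$ and $g=\ca{P}(\q_0,\p)$, \cref{cor:max_subs}\cref{cor:max_subs:fin:basic} tells me that $\langle g\rangle$ is a maximal submodule of $V_\p$, and the computation from part~(a) shows $V_\p/\langle g\rangle$ is a simple weight module supported exactly on $\{\mathfrak m_0,\dots,\mathfrak m_n\}$. Finally, since on a linear orbit a simple weight module is determined up to isomorphism by a single nonzero weight space, I would conclude $M\simeq V_\p/\langle g\rangle$.

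The routine part is~(a); the main obstacle is the combinatorial bookkeeping in~(b), namely pinning down the correct pair $(\p,\q_0)$ and the correct basic chain product $\ca{P}(\q_0,\p)$ so that the quotient matches $M$ not merely in its support but as a module, which rests on the uniqueness statement for simple weight modules on linear orbits recorded in~\cite{DGO96}.
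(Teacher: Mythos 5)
Your proposal is correct and follows essentially the same route as the paper: part (a) via the classification of maximal submodules in \cref{cor:max_subs} (only the basic-chain type survives when all orbits are infinite), the coprime-factor decomposition of $V_\p/\langle g\rangle$, and iteration using finite length from \cref{thm:length}; part (b) via the support-in-one-linear-orbit argument from \cite{DGO96}, the break computation $a\in\mathfrak m_n$, $\sigma(a)\in\mathfrak m_0$, and the uniqueness of the simple weight module on a linear orbit with a prescribed nonzero weight space. No substantive differences from the paper's argument.
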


A connection between weight modules and modules which are free over (the enveloping algebra of) a Cartan subalgebra $\mathfrak h$ already exists in the context of simple Lie algebras $\mathfrak g$, and is given through the so-called \textit{weighting functor}, proposed by Mathieu and used for the first time in the work of Nilsson~\cite{jN16} to classify $\mathfrak g$-modules which are free of finite rank over $U(\mathfrak h)$. It has recently been used in the work of Mendonça (see e.g.~\cite{M25}) in the more general context of $\mathfrak g$-modules which are finitely generated over $U(\mathfrak h)$. 

We note here that an analogous functor can be defined in the GWA setting. Let $\ca{W}: A\text{-}\mathsf{Mod} \ra  A\text{-}\mathsf{Mod}$ be the functor which sends each module $M$ to \[\ca{W}(M) := \bigoplus_{\fr{m} \in \Max(R)}M/\fr{m}M,\] with canonical $R$-action and with
\[x\cdot (v+\fr{m}M)=(x\cdot v+\sigma^{-1}(\fr{m})M) \quad \text{ and } \quad y\cdot (v+\fr{m}M)=(y\cdot v+\sigma(\fr{m})M).\]
Each morphism $f: M \ra N$ is mapped to $\ca{W}(f):\ca{W}(M) \ra \ca{W}(N)$ with $f(v+\fr{m}M)=f(v)+\fr{m}N$.
Here we note that the image of $\ca{W}$ lies in the category of weight modules, with $\ca{W}(M)_{\fr{m}} = M/\fr{m}M$. In particular, for $M\in \mathscr{C}_n$, the module $\ca{W}(M)$ is an everywhere supported weight module with $\dim_{R/\fr{m}} \ca{W}(M)_{\fr{m}} = n$ for each $\fr{m} \in \Max(R)$. In coming work, we plan to investigate this connection between $R$-free modules and weight modules for GWAs in more detail.

\section{Examples} \label{S:EX}
Here we illustrate our results by showing how they apply to a number of different GWAs.

\subsection{Modules for $\fr{sl}_2$}\label{SS:sl2_rank_1}

Consider the Lie algebra $\fr{sl}_2$ over a field $\F$ of characteristic $0$, with basis $e$, $f$, $h$ and Lie brackets $[h,e]=2e$, $[h,f]=-2f$ and $[e,f]=h$. We keep the notation and conventions from \cref{Eg:sl2-as-GWA,Eg:sl2:more}.

The Casimir element $C$ acts as a scalar on each simple $\fr{sl}_2$-module $V$, we call this scalar $\chi$ the central character of the module. Thus, any simple $\fr{sl}_2$-module with a fixed central character $\chi$ is a module for the algebra $\ca{A}_\chi=U(\fr{sl}_2) / \langle C-\chi \rangle$. As discussed in~\cref{Eg:sl2-prim-quot-as-GWA}, this quotient $\ca{A}_\chi$ can still be realized as a generalized Weyl algebra: 
fix a scalar $b \in \F$ and consider the generalized Weyl algebra $R(\sigma,a)$ with 
\[R=\F[h], \quad \sigma(h)=h-2 \quad\text{and}\quad a = -\frac{1}{4}(h+b)(h-b+2).\]
We have an isomorphism 
\[\ca{A}_\chi=  U(\fr{sl}_2)/\langle C-\chi \rangle\simeq R(\sigma, a),\]
where $\chi=\frac{1}{4}b(b-2)$ and the GWA generators $y$ and $x$ correspond to the classes of $e$ and $f$ in the quotient.

Note that all $\mathcal G$-orbits of non-units are infinite since $\chara(\F)=0$ and the units of $R$ are just $\F^\times$, and hence fixed under $\sigma$. It follows that $V_\p\simeq V_{\p'}\iff \p=\p'$, for all divisors $\p, \p'$ of $a$. In particular, if $1\neq\lambda\in\F^\times$, then $V_\p\not\simeq V_{\lambda\p}$.

Using the results of~\cref{SS:C1mods} we note that, for any fixed $b\in\F$, since $\Irr(a)=\{[h+b],[h-b+2]\}$ has size two, then up to unit twists we have four isomorphism classes of $\ca{A}_\chi$-modules which are free of rank $1$ over $R=\F[h]$: 
\[V_1, \quad V_a, \quad V_{h+b}, \quad V_{h-b+2},\]
unless $b=1$, in which case the latter two coincide.

Now, the simplicity of $V_\p$ depends on whether the $\mathcal G$-orbits of the factors of $a$ intersect. We have 
\[h-b+2 \in \orb(h+b) \iff  \left(\exists k\in \bb{Z}\right) h-b+2=\sigma^k(h+b)=h+b-2k \iff b\in \bb{Z}.\] 
So according to \cref{thm:all_simple}, when $b\not\in \bb{Z}$, all four modules $V_\p$ above are simple.

Conversely, assume that $b\in \bb{Z}$ so that $h-b+2 = \sigma^{b-1}(h+b)$. By \cref{cor:simplicity}, $V_{1}$ and $V_{a}$ are both simple. 
For $\p=\p_0=h+b$ we have $\q=\sigma(\frac{a}{\p})\sim \sigma(h-b+2)=h-b=\q_0$, so we have 
\[\sigma^{-b}(\q_0)=\sigma^{-b}(h-b)=h-b+2b=h+b=\p_0,\]
so by \cref{cor:simplicity}, the module $V_{h+b}$ is simple if and only if $b\in\bb{Z}_{>0}$.

So, in conclusion, all modules $V_\p$ are simple except the modules $V_{h-k}$ with $k\in \bb{Z}_{\geq 0}$. We note also that these reducible modules $V_{h-k}$ each have a unique maximal submodule generated by the chain product $(h+k)\cdots (h-k)$, and the quotient is the finite-dimensional simple weight module of dimension $k+1$. 

We have thus recovered most of the results in Section~3 of~\cite{jN15} as a special case of our construction (in \cite{jN15} the element $h$ was scaled by a factor $2$).

\subsection{Modules for deformations of type $A$ Kleinian singularities}
The so-called noncommutative deformations of type-$A$ Kleinian singularities are analogues of the primitive quotients $\ca{A}_\chi$ of $U(\fr{sl}_2)$ discussed above, corresponding to a GWA over $R=\F[h]$ with $a=T(h)=\prod_{i=1}^{n+1}\left(h+i/(n+1)\right)$, for $n\geq 0$. As discussed in \cref{Ex:kleinian-simple-simple}, if $\chara(\F)=0$ then any $\F[h]$-free module of rank $1$ for this GWA is simple and $V_\p\simeq V_{\p'}\iff \p=\p'$, for all divisors $\p, \p'$ of $a$. This means that, up to the unit twists by $\lambda\in\F^\times$, there are $2^{n+1}$ isomorphism classes of such simple free modules of rank $1$. These were also the object of~\cite{FLM24}.

\subsection{Modules for the Weyl algebra ${\bb A}_1$}

To obtain ${\bb A}_1$, which is generated by $x$ and $y$, satisfying the relation $[x, y]=1$, we take $R=\F[\theta]$ with $\sigma(\theta)=\theta-1$ and $a=\theta$. Once more, since $R^\times=\F^\times$ is pointwise fixed by $\sigma$, the isomorphism class of $V_\p$ is determined by $\p\mid a$ and up to the unit twists by $\lambda\in\F^\times$, there are exactly two isomorphism classes of free modules of rank $1$ over $\F[\theta]$. These are in fact the Whittaker modules and their duals (see \cref{SS:whittaker}). Suppose that $\chara(\F)=0$. Then ${\bb A}_1$ is simple, so these modules are also simple by \cref{thm:all_simple}. 

For example, take $\p=1$ and $\q=\sigma(a)=\theta-1$. Consider the basis $(v_k)_{k\geq 0}$ of $\F[\theta]$ defined by $v_k=(\theta-k)\cdots (\theta-1)$. Then relative to this basis we have 
\begin{align*}
x\cdot v_k=v_k+kv_{k-1}\quad\text{and}\quad  y\cdot v_k=v_{k+1}, \quad \text{for all $k\geq 0$.}
\end{align*}
It is readily seen that $V_1$ is the exponential module $\F[\theta]e^\theta$.

\subsection{Modules for the quantum group $\mathrm{GL}_q(2)$}
Let $q\in\F^\times$. A $q$-analogue of the coordinate ring of $2\times 2$ matrices is the algebra $M_q(2)$, given by generators $a, b, c, d$, satisfying the relations
\begin{align*}
ab=qba, \quad ac=qca,\quad bd=qdb,\quad cd=qdc,  \quad
bc= cb \quad \text{and}\quad ad-da = (q- q^{-1})bc.
\end{align*}
(In this example, to preserve the usual notation in the literature, we temporarily use $a$ to denote a generator of the algebra $M_q(2)$ and not the special element $a$ from the definition of a GWA, and use $q$ for a deformation parameter, which should not be confused with the element $\q$ used in the definition of the action of a GWA on a free module of rank $1$.) This algebra contains a special central element $\Delta=ad-qbc=da-q^{-1}bc$ which is usually called the \textit{quantum determinant}. Then the localization of $M_q(2)$ at the powers of $\Delta$ is the quantum group $\mathrm{GL}_q(2)$ of quantum invertible  $2\times 2$ matrices (see e.g.~\cite{T02, BG02}).

Consider $R=\F[b, c, \Delta^{\pm 1}]$ and its automorphism $\sigma$ defined by $\sigma(b)=q^{-1}b$, $\sigma(c)=q^{-1}c$ and $\sigma(\Delta)=\Delta$. Then $\mathrm{GL}_q(2)$ is the GWA $R(\sigma, \Delta+qbc)$. Assume henceforth that $q$ is not a root of unity, so that $\mathcal G$ is infinite. Then $\cent{\mathrm{GL}_q(2)}=R^\sigma=\F[\Delta^{\pm 1}]$. Since $\Irr(R)$ has finite orbits, there will be no simple $\mathrm{GL}_q(2)$-modules that are free over $R$. 

It is natural to consider primitive quotients of $\mathrm{GL}_q(2)$. If we further assume that $\F$ is algebraically closed, then the primitive ideals of $\mathrm{GL}_q(2)$ are (see~\cite[11.8.7]{BG02}):
\begin{align*}
\mathrm{(I)}\  \langle a-\lambda, b, c, d -\mu \rangle;\quad
\mathrm{(II)}\  \langle b, ad-\lambda \rangle,\ \langle c, ad-\lambda \rangle;\quad
\mathrm{(III)}\ \langle b-\lambda c, \Delta-\mu\rangle;
\end{align*}
where $\lambda, \mu\in\F^\times$. The representations of $\mathrm{GL}_q(2)$ that factor through the ideals of type (I) are one dimensional, and those that factor through the ideals of type (II) are representations of the quantum plane defined by $ac=qca$ and localized at $a$, which is a GWA over the PID $\F[c]$; finally, the representations that factor through an ideal of the form $\langle b-\lambda c, \Delta-\mu\rangle$ are representations of the GWA $\F[c](\overline\sigma, q\lambda c^2+\mu)$, also defined over the PID $\F[c]$, with $\overline\sigma(c)=q^{-1}c$. We can write $q\lambda c^2+\mu=q\lambda (c-\xi)(c+\xi)$ with $\xi\in\F^\times$. In case $\chara(\F)\neq 2$, $\xi\neq -\xi$ so $\Irr(q\lambda c^2+\mu)=\{[c-\xi], [c+\xi]\}$ has size two and, up to unit twists, there are four isomorphism classes of modules for this primitive quotient of $\mathrm{GL}_q(2)$ which are free of rank one over $\F[c]$: $V_1$, $V_{c-\xi}$, $V_{c+\xi}$ and $V_{(c-\xi)(c+\xi)}$. The orbits of $c-\xi$ and $c+\xi$ under $\overline\sigma$ do not intersect, as $q$ is not a root of unity, which shows that the only nontrivial submodules of these $V_\p$ are given by the finite orbits of $\overline\sigma$ on $\Irr(\F[c])$. The unique such is $\orb(c)$ and if we view $\langle c\rangle$ as a submodule of $V_\p$ then $\langle c\rangle\simeq V_{q\p}$ with $\dim_\F V_\p/\langle c\rangle=1$. Hence, for any divisor $\p$ of $q\lambda c^2+\mu$ in $\F[c]$, we get an infinite chain of inclusions 
\[  \cdots \hookrightarrow V_{q^2\p} \hookrightarrow V_{q\p} \hookrightarrow V_\p.\]

\section{$R$-free modules of higher rank} \label{SEC:highrank}
In this section, we investigate $R$-free modules of arbitrary finite rank; the objects of $\mathscr{C}_n$. We continue to assume that $R$ is a commutative integral domain and that $a\neq 0$.

\subsection{Stratification of modules in $\mathscr{C}_n$}

If $B=(b_{ij})\in \mathrm{Mat}_{m\times n}(R)$ is a matrix or coordinate-vector, we can apply $\sigma$ entry-wise, and we write just $\sigma(B)$ for $(\sigma(b_{ij}))$. We note that $\sigma(BC)=\sigma(B)\sigma(C)$, $\sigma(\det(B))=\det(\sigma(B))$, and $(\sigma(B))^{-1}=\sigma(B^{-1})$, whenever
the corresponding operations are defined.

Let $V$ be an $R$-free module of rank $n$ and suppose that the action of $R$ on $V$ extends to $A$. Without loss of generality we may assume that $V=R^n$ as an $R$-module. Let $e_1,\ldots, e_n \in R^n$ be the standard $R$-basis vectors, and let $\pi_1,\ldots, \pi_n: R^n \ra R$ be the canonical projections.  Set $p_{ij}:=\pi_i(x \cdot e_j)$ and $\bigp := (p_{ij}) \in \mathrm{Mat}_n(R)$, and similarly,
$q_{ij}:=\pi_i(y \cdot e_j)$, and $\bigq := (q_{ij}) \in \mathrm{Mat}_n(R)$. Then the relations in $A$ and the commutativity of $R$ imply that the actions of $x$ and $y$ on general elements of $V$ are given by
\begin{align}\label{E:bigaction}
x \cdot v = \bigp \sigma^{-1}(v) \quad\text{ and }\quad y \cdot v = \bigq \sigma(v), 
\end{align}
where $v\in R^n$ is written as a column matrix as usual. Moreover, the relation $x\cdot (y \cdot v) = av$ implies that the matrices $\bigp$ and $\bigq$ must be \textit{compatible} in the sense that $\bigp\sigma^{-1}(\bigq)=aI$. Similarly, the relation $y\cdot (x \cdot v) = \sigma(a)v$ gives $\bigq\sigma(\bigp)=\sigma(a)I$. 

Let $\mathrm{Frac}(R)$ be the field of fractions of $R$. The two last matrix equations show in particular that $\bigp$ and $\sigma^{-1}(\bigq)$ commute and that $\bigp$ and $\bigq$ are invertible when viewed as elements in $\mathrm{Mat}_n(\mathrm{Frac}(R))$, since $a, \sigma(a)\neq 0$. We deduce that $\bigq=\sigma(a\bigp^{-1})$ and that $a\bigp^{-1}\in \mathrm{Mat}_n(R)$; in turn, the latter imply that $\bigp\sigma^{-1}(\bigq)=aI$ and $\bigq\sigma(\bigp)=\sigma(a)I$.

Conversely, any matrix $\bigp \in \mathrm{Mat}_n(R)$ such that $a\bigp^{-1}\in \mathrm{Mat}_n(R)$ gives rise to an $A$-module $V$ which is free of rank $n$ over $R$, by setting $\bigq=\sigma(a\bigp^{-1})$ and using~\cref{E:bigaction}. As in the case of rank $1$, we shall write $V=V_{\bigp}$ and, for $\bigp$ fixed, we shall always write $\bigq$ for $\bigq_{\bigp}=\sigma(a\bigp^{-1})$.

In summary, we have shown the following.

\begin{prop}
Let $\bigp \in \mathrm{Mat}_n(R)$ with $\det(\bigp)\neq 0$, such that $a\bigp^{-1}\in \mathrm{Mat}_n(R)$, and set $\bigq:=\sigma(a\bigp^{-1})$.
Let $V_\bigp=R^n$, as an $R$-module, and for $v\in V_\bigp$ define
\[x \cdot v = \bigp\sigma^{-1}(v) \quad \text{ and } \quad y \cdot v = \bigq\sigma(v).\]
Under these actions, $V_P$ is an $A$-module in $\mathscr{C}_n$. Moreover, any $A$-module that is $R$-free of finite rank is isomorphic to some $V_\bigp$. 
\end{prop}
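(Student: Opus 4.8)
The plan is to prove the two assertions in turn, leaning on the computation carried out in the paragraphs preceding the statement. For the first assertion I would verify that the prescribed operators on $V_\bigp=R^n$ satisfy the defining relations of $A$ listed in \cref{D:GWA}; since $A$ is generated by $R$, $x$, $y$ subject only to those relations, this suffices to produce an $A$-module structure, and then $V_\bigp\in\mathscr{C}_n$ is immediate because $V_\bigp=R^n$ as an $R$-module. Concretely, the relations $x\sigma(r)=rx$ and $yr=\sigma(r)y$ hold because $R$ is commutative, so a scalar $\sigma^{\pm1}(r)$ slides freely past the matrices $\bigp$ and $\bigq$; and $xy=a$, $yx=\sigma(a)$ hold because of the identities $\bigp\,\sigma^{-1}(\bigq)=aI$ and $\bigq\,\sigma(\bigp)=\sigma(a)I$, which follow from $\sigma^{-1}(\bigq)=a\bigp^{-1}$ (the defining relation for $\bigq$) together with the fact that the central element $a$ commutes with $\bigp$.

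For the converse, given an $A$-module $V$ that is $R$-free of finite rank $n$, I would fix an $R$-basis to identify $V\cong R^n$, set $\bigp=\big(\pi_i(x\cdot e_j)\big)$ and $\bigq=\big(\pi_i(y\cdot e_j)\big)$, and observe---using $xr=\sigma^{-1}(r)x$ and $yr=\sigma(r)y$---that the $x$- and $y$-actions on a general vector are then $v\mapsto\bigp\sigma^{-1}(v)$ and $v\mapsto\bigq\sigma(v)$, exactly as in~\cref{E:bigaction}. The relations $xy=a$ and $yx=\sigma(a)$, applied to all $v$, translate into $\bigp\sigma^{-1}(\bigq)=aI$ and $\bigq\sigma(\bigp)=\sigma(a)I$; taking determinants in the first and using that $R$ is a domain with $a\neq0$ gives $\det(\bigp)\neq0$, so $\bigp$ is invertible over $\mathrm{Frac}(R)$ and $a\bigp^{-1}=\sigma^{-1}(\bigq)\in\mathrm{Mat}_n(R)$ with $\bigq=\sigma(a\bigp^{-1})$. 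Thus $\bigp$ meets the hypotheses of the statement, and the chosen identification $V\to V_\bigp$ is an $R$-linear bijection intertwining the actions of $x$ and $y$, hence an $A$-module isomorphism.

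The argument is essentially bookkeeping, so there is no real obstacle; the points that need a little attention are the systematic use of commutativity of $R$ to move scalars past $\bigp$ and $\bigq$, the careful tracking of $\sigma$ versus $\sigma^{-1}$ (and the convention that $\sigma$ acts entrywise on matrices, compatibly with products, determinants and inverses), and the opening observation that checking the defining relations of the GWA genuinely suffices to turn $R^n$ into an $A$-module.
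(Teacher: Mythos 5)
Your proposal is correct and follows essentially the same route as the paper, whose proof consists of the computations in the paragraphs preceding the statement: extracting $\bigp$ and $\bigq$ from the action on a basis, deriving $\bigp\sigma^{-1}(\bigq)=aI$ and $\bigq\sigma(\bigp)=\sigma(a)I$, and inverting over $\mathrm{Frac}(R)$ to get $\bigq=\sigma(a\bigp^{-1})$ with $a\bigp^{-1}\in\mathrm{Mat}_n(R)$, plus the reverse verification of the GWA relations. Your determinant argument for $\det(\bigp)\neq 0$ and the explicit check that the identification intertwines the $x$- and $y$-actions are just slightly more spelled-out versions of the same bookkeeping.
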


\begin{lemma}\label{L:rankn:isos}
We have \[V_\bigp \simeq V_{\bigp'} \quad \text{ if and only if } \quad \bigp'=S\bigp\sigma^{-1}(S^{-1})\] for some invertible $S\in\mathrm{Mat}_n(R)$.
\end{lemma}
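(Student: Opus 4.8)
The plan is to reduce the statement, via \cref{L:hom:simplification}, to a single matrix identity expressing compatibility with the action of $x$ on the standard basis. First I would observe that any $A$-module isomorphism $\varphi\colon V_\bigp\to V_{\bigp'}$ is in particular an isomorphism of the underlying free $R$-modules $R^n\to R^n$, hence is given by $\varphi(v)=Sv$ for a unique $S\in\mathrm{Mat}_n(R)$; since $\varphi$ is bijective and $R$-linear, its inverse is also $R$-linear and represented by $S^{-1}$, so $S$ is invertible in $\mathrm{Mat}_n(R)$. Conversely, every invertible $S$ defines an $R$-module isomorphism $v\mapsto Sv$. So the task is to characterize, among invertible matrices $S$, those for which $v\mapsto Sv$ is in addition $A$-linear.

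Next, since $V_{\bigp'}$ is $R$-torsionfree ($R$ is a domain and $V_{\bigp'}$ is $R$-free) and $e_1,\dots,e_n$ generate $V_\bigp$ over $R$, \cref{L:hom:simplification} says that $\varphi(v)=Sv$ is $A$-linear if and only if $\varphi(x\cdot e_j)=x\cdot\varphi(e_j)$ for all $j$. Here one uses that $\sigma^{-1}(e_j)=e_j$, because the standard basis vectors have entries in $\{0,1\}\subseteq R^\sigma$, so by \cref{E:bigaction} the left-hand side equals $S\bigp e_j$ while the right-hand side equals $\bigp'\sigma^{-1}(Se_j)=\bigp'\sigma^{-1}(S)e_j$, with $\sigma$ applied entrywise. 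As this must hold for every $j$, it is equivalent to the single matrix equation $S\bigp=\bigp'\sigma^{-1}(S)$.

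Finally I would rewrite this equation: since $\sigma^{-1}$ is a ring automorphism, $\sigma^{-1}(S)$ is invertible with $(\sigma^{-1}(S))^{-1}=\sigma^{-1}(S^{-1})$, so $S\bigp=\bigp'\sigma^{-1}(S)$ is equivalent to $\bigp'=S\bigp\sigma^{-1}(S^{-1})$. Reading both implications together yields exactly the claimed equivalence. There is no substantial obstacle here; the only points needing minor care are the appeal to \cref{L:hom:simplification} (so that compatibility with $y$ need not be verified by hand) and the bookkeeping of where $\sigma$ is applied entrywise, together with the elementary remark that the $\sigma$-fixedness of the standard basis vectors makes $x\cdot e_j$ simply the $j$-th column of $\bigp$.
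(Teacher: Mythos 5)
Your proof is correct and follows essentially the same route as the paper: both reduce the isomorphism question to the matrix identity $S\bigp=\bigp'\sigma^{-1}(S)$ by comparing the $x$-action with an $R$-linear map $v\mapsto Sv$ for invertible $S$. Your explicit appeal to \cref{L:hom:simplification} to dispense with checking the $y$-action in the converse direction is a slightly more careful packaging of what the paper leaves implicit, but it is not a different argument.
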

\begin{proof}
An $A$-module isomorphism $\varphi: V_\bigp \ra V_{\bigp'}$ is also an isomorphism of $R$-modules, so $V_\bigp$ and $V_{\bigp'}$ are free of the same rank $n$, and $\varphi(v)=Sv$ for all $v\in R^n$ for some invertible matrix $S \in \mathrm{Mat}_n(R)$.
Since $\varphi$ is a homomorphism of $A$-modules, 
\[S\bigp\sigma^{-1}(v)=\varphi(\bigp\sigma^{-1}(v))= \varphi(x \cdot v) 
=x\cdot\varphi(v)= x\cdot Sv = \bigp'\sigma^{-1}(S)\sigma^{-1}(v)\]
 for all $v\in V_\bigp$, and hence $S\bigp = \bigp'\sigma^{-1}(S)$, from which the lemma follows.
\end{proof}

Next, we show how the Smith normal form provides a stratification of $\mathscr{C}_n$. A corresponding analysis for the $\fr{sl}_2$-case can be found in Section~3 of~\cite{MP17}. We start by recalling the following facts about the Smith normal form.

\begin{lemma}
Let $R$ be a PID and let $M \in \mathrm{Mat}_{m\times n}(R)$. Then there exist $S \in \mathrm{GL}_m(R)$, $T \in\mathrm{GL}_n(R)$, and a quasi-diagonal matrix $D \in \mathrm{Mat}_{m\times n}(R)$ such that
\[M=SDT.\]
where $D=\mathrm{diag}(d_1,d_2, \ldots, d_k,0, \ldots ,0)$ with nonzero diagonal elements $d_i$ satisfying $d_i|d_{i+1}$. The $d_i$ are called the {\bf invariant factors} and they are uniquely determined by $M$ up to units. The matrix $D$ (or the lists of its diagonal elements) is called the {\bf Smith normal form} (SNF) of $M$.

Additionally, the invariant factors satisfy
\[d_i=\frac{g_{i}}{g_{i-1}}\]
where $g_i$ is the greatest common divisor of all $i\times i$-minors of $M$ (with $g_0:=1$).
\end{lemma}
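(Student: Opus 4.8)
The plan is to prove existence by a descent argument on the matrix entries using elementary transformations over $R$, and then to obtain uniqueness together with the formula $d_i=g_i/g_{i-1}$ from the fact that the gcd of the $i\times i$ minors is unchanged, up to units, under left and right multiplication by invertible matrices.

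For existence I would argue by induction on $\min(m,n)$. Fix a length function $\delta\colon R\setminus\{0\}\to\bb{Z}_{\geq0}$ counting prime factors with multiplicity, so that $\delta(r)=0$ exactly when $r\in R^\times$. If $M=0$ there is nothing to prove; otherwise every matrix of the form $SMT$ with $S\in\mathrm{GL}_m(R)$ and $T\in\mathrm{GL}_n(R)$ still has a nonzero entry, so I may pick such an $M'=SMT$ having a nonzero entry $d$ with $\delta(d)$ minimal, and after permuting rows and columns (permutation matrices lie in $\mathrm{GL}$) assume $d$ sits in position $(1,1)$. The key claim is that $d$ then divides every entry of the first row and the first column of $M'$: if $d\nmid M'_{1j}$, write $g=\gcd(d,M'_{1j})=\alpha d+\beta M'_{1j}$ by Bézout, and right-multiply $M'$ by the matrix which agrees with the identity outside rows and columns $\{1,j\}$ and equals $\begin{pmatrix}\alpha & -M'_{1j}/g\\ \beta & d/g\end{pmatrix}$ there; this matrix has determinant $1$, hence lies in $\mathrm{GL}_n(R)$, and the resulting equivalent matrix has $(1,1)$-entry $g$ with $\delta(g)<\delta(d)$, contradicting minimality. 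The first column is handled symmetrically by left multiplication. Once $d$ divides the entire first row and column, elementary operations (adding suitable multiples of the first row and column to the others) bring $M'$ to block form $\begin{pmatrix}d & 0\\ 0 & M''\end{pmatrix}$. A final use of the minimality argument — if $d\nmid(M'')_{k\ell}$, add the corresponding row of the lower block to the first row to reintroduce an entry not divisible by $d$, then reduce $\delta$ of the $(1,1)$-entry as above — shows that $d$ divides every entry of $M''$. Setting $d_1:=d$ and applying the inductive hypothesis to $M''$ produces the desired quasi-diagonal $D$, with $d_1\mid d_2\mid\cdots$ because $d_1$ divides every entry of $M''$, hence divides every entry of any matrix equivalent to $M''$, in particular $d_2$.

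For uniqueness and the minor formula I would use the Cauchy–Binet identity: for matrices of compatible sizes, every $i\times i$ minor of a product $AB$ is an $R$-linear combination of $i\times i$ minors of $A$, and likewise of $B$. Hence $g_i(A)\mid g_i(AB)$ and $g_i(B)\mid g_i(AB)$; taking $B$ invertible and applying this also to $AB$ and $B^{-1}$ gives $g_i(SMT)\sim g_i(M)$ for all $S\in\mathrm{GL}_m(R)$ and $T\in\mathrm{GL}_n(R)$, i.e.\ the gcd of the $i\times i$ minors is an invariant of the equivalence class, well defined up to units. Evaluating this invariant on $D=\mathrm{diag}(d_1,\dots,d_k,0,\dots,0)$: a minor $\det(D_{I,J})$ is nonzero only when $I=J$, in which case it equals $\prod_{\ell\in I}d_\ell$, and since $d_1\mid\cdots\mid d_k$ the gcd of all such products of size $i$ is $d_1d_2\cdots d_i$ for $i\leq k$ (and it is $0$ for $i>k$). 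Therefore $g_i(M)\sim d_1\cdots d_i$ and $d_i\sim g_i/g_{i-1}$ with $g_0:=1$, which in particular shows that the $d_i$ are determined by $M$ up to units.

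The main obstacle is the existence step: since $R$ is only assumed to be a PID and need not be Euclidean, one cannot literally run the Euclidean algorithm on matrix entries, so the argument must instead rest on minimizing the length function $\delta$ together with Bézout-based invertible $2\times2$ transformations. The one genuinely delicate point is that after clearing the first row and column the pivot $d$ need not automatically divide the entries of the remaining block $M''$, which is why the extra ``reintroduce a bad entry'' step is needed; everything else is routine manipulation with elementary matrices.
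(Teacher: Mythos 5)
Your proof is correct. Note, however, that the paper does not prove this lemma at all: it is recalled as a classical fact about Smith normal form over a PID (stated without proof just before it is applied to stratify the modules $V_{\mathsf{P}}$), so there is no in-paper argument to compare yours against. What you give is the standard textbook proof, and it is complete: the descent on the length function $\delta$ combined with determinant-one B\'ezout $2\times 2$ blocks correctly replaces the Euclidean algorithm in a general (possibly non-Euclidean) PID, the ``reintroduce a bad entry'' step properly forces the pivot to divide the complementary block (which is exactly what yields $d_1\mid d_2\mid\cdots$ after induction), and the Cauchy--Binet argument shows $g_i$ is an invariant of the equivalence class, which, evaluated on the quasi-diagonal form, gives $g_i\sim d_1\cdots d_i$ and hence both uniqueness of the $d_i$ up to units and the formula $d_i=g_i/g_{i-1}$ claimed in the statement. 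The only cosmetic omissions are the base case of the induction (a $1\times n$ or $m\times 1$ matrix, handled by the same pivot argument) and the remark that the minimum of $\delta$ over all nonzero entries of all matrices equivalent to $M$ exists because $\delta$ takes values in $\mathbb{Z}_{\geq 0}$, which you in fact implicitly use; neither affects correctness.
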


\begin{lemma}\label{L:rankn:dn-div-a}
Let $R$ be a PID and let $\bigp \in \mathrm{Mat}_n(R)$. Then $\bigp$ defines a module $V_\bigp \in \mathscr{C}_n$ if and only if the $n$'th invariant factor of $\bigp$ divides $a$.
\end{lemma}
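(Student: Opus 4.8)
The plan is to reduce everything to the Smith normal form of $\bigp$ and then invoke the characterization established just above, namely that $V_\bigp\in\mathscr{C}_n$ precisely when $\det(\bigp)\neq 0$ and $a\bigp^{-1}\in\mathrm{Mat}_n(R)$. Write $\bigp=SDT$ with $S,T\in\mathrm{GL}_n(R)$ and $D=\mathrm{diag}(d_1,\dots,d_n)$, where $d_i\mid d_{i+1}$ and where we allow some of the $d_i$ to vanish, so that $d_n$ is the $n$-th invariant factor (equal to $0$ exactly when $\bigp$ fails to have full rank).

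First I would dispose of the rank condition on both sides. If the $n$-th invariant factor $d_n$ divides $a$, then, since $a\neq 0$ by our standing assumption, $d_n\neq 0$; because $d_i\mid d_n$ for every $i$, all invariant factors are nonzero and hence $\det(\bigp)$ equals a unit times $d_1\cdots d_n\neq 0$. Conversely, if $V_\bigp\in\mathscr{C}_n$ then $\det(\bigp)\neq 0$, so again every $d_i\neq 0$ and $D$ is an honest invertible diagonal matrix over $\mathrm{Frac}(R)$.

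The heart of the argument is then the identity $a\bigp^{-1}=T^{-1}\bigl(aD^{-1}\bigr)S^{-1}$, valid over $\mathrm{Frac}(R)$, together with the observation that left or right multiplication by the $R$-invertible matrices $S^{-1},T^{-1}$ (equivalently by $S,T$) preserves $\mathrm{Mat}_n(R)$. Hence $a\bigp^{-1}\in\mathrm{Mat}_n(R)$ if and only if $aD^{-1}=\mathrm{diag}(a/d_1,\dots,a/d_n)\in\mathrm{Mat}_n(R)$, i.e.\ if and only if $d_i\mid a$ for all $i$; since $d_i\mid d_n$, this is equivalent to the single condition $d_n\mid a$. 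Combining with the previous paragraph yields both implications: $d_n\mid a$ gives $\det(\bigp)\neq 0$ and $a\bigp^{-1}\in\mathrm{Mat}_n(R)$, hence $V_\bigp\in\mathscr{C}_n$; and $V_\bigp\in\mathscr{C}_n$ gives $\det(\bigp)\neq 0$ and $a\bigp^{-1}\in\mathrm{Mat}_n(R)$, hence $d_n\mid a$.

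I expect this to be essentially routine; the only points needing a moment's care are making sure the condition "$d_n\mid a$" already encodes "$\det(\bigp)\neq 0$" (so that full rank need not be assumed separately), and being explicit that pre- and post-multiplication by elements of $\mathrm{GL}_n(R)$ does not affect membership in $\mathrm{Mat}_n(R)$ in either direction. As an alternative, one can read the conclusion directly off the formula $d_n=\det(\bigp)/g_{n-1}$, where $g_{n-1}$ is the gcd of the $(n-1)\times(n-1)$ minors of $\bigp$: since $a\bigp^{-1}=\tfrac{a}{\det(\bigp)}\,\mathrm{adj}(\bigp)$ and the entries of $\mathrm{adj}(\bigp)$ are, up to sign, precisely those minors, integrality of $a\bigp^{-1}$ amounts to $\det(\bigp)\mid a\,g_{n-1}$, which in a PID is exactly $d_n\mid a$.
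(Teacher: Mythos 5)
Your proof is correct and follows essentially the same route as the paper: write $\bigp=SDT$ in Smith normal form, use that multiplication by $S,T\in\mathrm{GL}_n(R)$ does not affect integrality, and reduce the condition $a\bigp^{-1}\in\mathrm{Mat}_n(R)$ to $a d_n^{-1}\in R$, i.e.\ $d_n\mid a$. Your extra care with the degenerate case $\det(\bigp)=0$ (noting $d_n\mid a$ with $a\neq 0$ forces full rank) and the alternative adjugate argument are fine additions but not a different method.
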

\begin{proof}
Since $R$ is a PID, the Smith normal form allows us to find invertible matrices $S,T \in \mathrm{Mat}_n(R)$ such that $\bigp=SDT$ with $D=\mathrm{diag}(d_1, \ldots, d_n)$ and $d_i|d_{i+1}$, where the invariant factors $d_i$ are uniquely determined by $\bigp$ up to multiplication by units of $R$.
Then in $\mathrm{Mat}_n(\mathrm{Frac}{(R)})$ we have $\bigq=\sigma(a\bigp^{-1}) = \sigma(T^{-1}aD^{-1}S^{-1})$. Since $D^{-1}=(d_1^{-1}, \ldots, d_n^{-1})$, the matrix $\bigq$ has all coefficients in $R$ if and only if $ad_n^{-1} \in R$, or simply $d_n|a$. 
\end{proof}
The argument above also shows that \[\mathrm{SNF}(\bigp)=(d_1,\ldots,d_n)\; \Leftrightarrow \; \mathrm{SNF}(\bigq)=(\sigma(\tfrac{a}{d_n}),\ldots, \sigma(\tfrac{a}{d_1})).\]

\begin{cor}
Let $d=(d_1,\ldots,d_n) \in R^n$ such that $d_i|d_{i+1}$ and $d_n|a$, and let $D=\mathrm{diag}(d_1,\ldots, d_n)$. Define maps 
\[\Phi_d: \mathrm{GL}_n(R) \times  \mathrm{GL}_n(R) \rightarrow 
A\text{-}\mathsf{Mod} \text{ \ and \ } \Xi_d: \mathrm{GL}_n(R) \rightarrow A\text{-}\mathsf{Mod}\]
by
\[\Phi_d(S,T)=V_{SDT} \text{ and } \Xi_d(S)=V_{DS}.\]
Then, for any $A$-module $V_\bigp$ we have $V_\bigp=\Phi_d(S,T)$ for some $S,T\in \mathrm{GL}_n(R)$ and some $d$ uniquely determined by $\bigp$.
Similarly, any $A$-module that is $R$-free of rank $n$ is isomorphic to some $\Xi_d(S)$.
\end{cor}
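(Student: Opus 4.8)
The plan is to read off the corollary directly from the Smith normal form together with \cref{L:rankn:dn-div-a} and \cref{L:rankn:isos}; no new idea is required beyond careful bookkeeping. First I would check that $\Phi_d$ and $\Xi_d$ are well defined with image in $\mathscr{C}_n$. Since $d_i \mid d_n \mid a$ for every $i$, the diagonal matrix $aD^{-1}=\mathrm{diag}(a/d_1,\dots,a/d_n)$ lies in $\mathrm{Mat}_n(R)$, hence so do $a(SDT)^{-1}=T^{-1}(aD^{-1})S^{-1}$ and $a(DS)^{-1}=S^{-1}(aD^{-1})$; moreover $\det(SDT)$ and $\det(DS)$ are unit multiples of $d_1\cdots d_n\neq 0$. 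So $V_{SDT}$ and $V_{DS}$ exist and belong to $\mathscr{C}_n\subseteq A\text{-}\mathsf{Mod}$.

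For the first assertion, let $V_\bigp$ be any $A$-module as produced by the preceding proposition, so $\bigp\in\mathrm{Mat}_n(R)$ with $\det\bigp\neq 0$ and $a\bigp^{-1}\in\mathrm{Mat}_n(R)$. Apply the Smith normal form to write $\bigp=SDT$ with $S,T\in\mathrm{GL}_n(R)$ and $D=\mathrm{diag}(d_1,\dots,d_n)$, $d_i\mid d_{i+1}$. By \cref{L:rankn:dn-div-a}, the condition $a\bigp^{-1}\in\mathrm{Mat}_n(R)$ is equivalent to $d_n\mid a$, so $d=(d_1,\dots,d_n)$ meets the hypotheses of the corollary and $V_\bigp=V_{SDT}=\Phi_d(S,T)$ on the nose. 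That $d$ is uniquely determined by $\bigp$ (up to units) is precisely the uniqueness of the invariant factors recalled above.

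For the second assertion, let $V$ be an arbitrary $R$-free $A$-module of rank $n$. As already shown, $V\simeq V_\bigp$ for some such $\bigp$, and by the previous paragraph $V_\bigp=V_{SDT}$ with $d=(d_1,\dots,d_n)$ as above. Now apply \cref{L:rankn:isos} with the invertible matrix $U:=S^{-1}$: since $U^{-1}=S$ gives $\sigma^{-1}(U^{-1})=\sigma^{-1}(S)$, we compute $U(SDT)\sigma^{-1}(U^{-1})=D\bigl(T\sigma^{-1}(S)\bigr)$. Setting $S':=T\sigma^{-1}(S)\in\mathrm{GL}_n(R)$, \cref{L:rankn:isos} yields $V\simeq V_{SDT}\simeq V_{DS'}=\Xi_d(S')$, as claimed.

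The argument is routine, and I do not anticipate a genuine obstacle. The only points deserving attention are the distinction between the literal equality $V_\bigp=\Phi_d(S,T)$ (valid because $\bigp=SDT$ by construction) and the mere isomorphism $V\simeq\Xi_d(S')$ (which genuinely invokes the change-of-basis criterion \cref{L:rankn:isos}), and the small identity $\sigma^{-1}(U^{-1})=\sigma^{-1}(S)$ for $U=S^{-1}$ used in the last computation.
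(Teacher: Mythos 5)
Your proof is correct and follows essentially the same route as the paper: write $\bigp=SDT$ in Smith normal form, invoke \cref{L:rankn:dn-div-a} for $d_n\mid a$, and then apply \cref{L:rankn:isos} with $S^{-1}$ to pass from $V_{SDT}$ to $V_{D(T\sigma^{-1}(S))}=\Xi_d(T\sigma^{-1}(S))$, exactly as in the paper's proof. The extra checks you include (well-definedness of $\Phi_d$, $\Xi_d$ and the up-to-units caveat on the uniqueness of $d$) are consistent with the paper and harmless.
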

\begin{proof}
Writing $\bigp=SDT$ in Smith normal form, we have $V_\bigp = V_{SDT}=\Phi_d(S,T)$, where $d=(d_1,\ldots,d_n)$ is formed from the diagonal entries of $D$ and $d_n|a$ by \cref{L:rankn:dn-div-a}. For the second claim, take $T'=T\sigma^{-1}(S)$. Then by \cref{L:rankn:isos} we have
\[V_\bigp = V_{SDT} \simeq V_{S^{-1}(SDT)\sigma^{-1}(S)}=V_{DT'}=\Xi_d(T').\]
\end{proof}

So the SNF of $\bigp$ provides a stratification on the isoclasses of $R$-free $A$-modules of a fixed finite rank, but note that even if $\mathrm{SNF}(\bigp)=\mathrm{SNF}(\bigp')$, the modules $V_\bigp$ and $V_{\bigp'}$ may be non-isomorphic.

\subsection{Construction of simple modules of arbitrary finite rank}
We shall show that, under appropriate assumptions, simple modules over $A$ of arbitrary finite rank over $R$ exist, by giving explicit families of examples. 

We shall need the following lemma on translated multisets of integers. When $S$ is an integer multiset and $n\in \bb{Z}$, we define the multiset $S-n:=\{s-n \mid s\in S\}$.
\begin{lemma}
\label{lemma:set_equation}
For fixed integers $n,j,k$, the multiset equation
\begin{equation}
\label{eq:set_equation}
\{j\} \cup (S-n) \cup T = \{k\} \cup S \cup (T-n)
\end{equation}
has a finite solution $S,T$ with $|S|,|T|<\infty$ if and only if $n|j-k$.
\end{lemma}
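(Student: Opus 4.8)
The plan is to translate the multiset identity into an arithmetic congruence for the ``only if'' direction, and to write down an explicit solution for the ``if'' direction.

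First, for necessity I would reduce \cref{eq:set_equation} modulo $n$. The key point is that translating a finite multiset by $n$ is invisible under the reduction map $\bb{Z}\to\bb{Z}/n\bb{Z}$ (applied to a multiset entrywise, with multiplicity): the image of $S-n$ equals that of $S$, and similarly for $T$. Applying this map to both sides of \cref{eq:set_equation}, the $S$- and $T$-summands have equal images, so the identity collapses to $\{\ol{j}\}\cup\ol{S}\cup\ol{T}=\{\ol{k}\}\cup\ol{S}\cup\ol{T}$ in $\bb{Z}/n\bb{Z}$. As $S,T$ are finite, $\ol{S},\ol{T}$ are finite multisets, and finite multisets over a fixed set form a cancellative commutative monoid under $\cup$; cancelling $\ol{S}\cup\ol{T}$ gives $\ol{j}=\ol{k}$, i.e.\ $n\mid j-k$. (For $n=0$ one has $\bb{Z}/0\bb{Z}=\bb{Z}$ and the same argument yields $j=k$; for $n<0$ one reduces modulo $|n|$ and uses $n\mid j-k\iff|n|\mid j-k$.)

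For sufficiency, assume $n\mid j-k$. If $n=0$ then $j=k$ and $S=T=\varnothing$ works, so suppose $n\neq0$ and write $j-k=\ell n$ with $\ell\in\bb{Z}$. When $\ell\geq0$ I would take $T=\varnothing$ and $S=\{k+n,k+2n,\dots,k+\ell n\}$ (with $S=\varnothing$ if $\ell=0$); since $k+\ell n=j$, a direct check shows both sides of \cref{eq:set_equation} equal the multiset $\{k,k+n,\dots,k+\ell n\}$. When $\ell<0$ I would use the symmetry of \cref{eq:set_equation} under simultaneously swapping $j\leftrightarrow k$ and $S\leftrightarrow T$: applying the previous case to the pair $(k,j)$, for which $(k-j)/n=-\ell>0$, and swapping back yields the solution $S=\varnothing$, $T=\{j+n,j+2n,\dots,j+|\ell|\,n\}$. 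All multisets produced are finite---in fact honest sets, since $n\neq0$---so this settles the converse.

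This is an elementary statement and I do not expect a real obstacle; the only points requiring attention are the legitimacy of the cancellation step (finite multisets over a set genuinely form a cancellative monoid) and the bookkeeping of the sign of $\ell$ together with the degenerate cases $\ell=0$ and $n=0$, which the $j\leftrightarrow k$, $S\leftrightarrow T$ symmetry keeps under control. A slicker but less self-contained alternative would encode a finite integer multiset $S$ by the Laurent polynomial $f_S(t)=\sum_{s\in S}t^{s}$, rewrite \cref{eq:set_equation} as $t^{j}-t^{k}=(1-t^{-n})\bigl(f_S(t)-f_T(t)\bigr)$, deduce $n\mid j-k$ by specializing $t$ to a primitive $|n|$-th root of unity, and recover the explicit solution from the identity $t^{j}-t^{k}=(1-t^{-n})\,t^{k+n}\bigl(1+t^{n}+\dots+t^{(\ell-1)n}\bigr)$ valid for $\ell\geq1$.
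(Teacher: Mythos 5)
Your proof is correct. The ``if'' direction is essentially the paper's: both write down the same arithmetic-progression solution (the paper takes $S=\varnothing$ and puts the chain $\{j+n,\dots,k\}$ in $T$ after assuming $j\leq k$; you place it in $S$ or $T$ according to the sign of $\ell=(j-k)/n$, which is in fact a bit more careful about signs than the paper's one-line treatment). The ``only if'' direction, however, is implemented differently. The paper encodes a finite multiset $E$ by the monic polynomial $f_E(t)=\prod_{e\in E}(t-e)$, rewrites \cref{eq:set_equation} as $(t-j)f_S(t+n)f_T(t)=(t-k)f_S(t)f_T(t+n)$ in $\bb{Z}[t]$, reduces the coefficients modulo $n$, and uses monicity of $f_Sf_T$ to conclude $(j-k)\equiv 0$. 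You instead reduce the multiset elements themselves modulo $n$, observe that $S-n$ and $S$ (resp.\ $T-n$ and $T$) have the same image, and cancel $\ol{S}\cup\ol{T}$ in the cancellative monoid of finite multisets over $\bb{Z}/n\bb{Z}$ to get $\ol{j}=\ol{k}$. Both are reductions mod $n$ at heart, but yours is more elementary and self-contained (no polynomial encoding, and the cancellation step is justified exactly where the paper invokes monicity), while also handling the degenerate cases $n=0$ and $n<0$ explicitly; the paper's polynomial encoding has the mild advantage of matching the way multisets of irreducible factors are manipulated elsewhere in the paper. Your Laurent-polynomial alternative at the end is closer in spirit to the paper's argument, though with the sum encoding $\sum_s t^s$ rather than the product encoding of roots.
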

\begin{proof}
First, assume that $n|j-k$. By symmetry in \cref{eq:set_equation} we may assume that $j \leq k$. Then $(S,T)=(\varnothing,\{j+n,j+2n, \ldots, k\})$ solves the equation.

For the reverse implication, we can assume that $n\neq 0$ as this case is trivial. We introduce the following notation: given a finite multiset of integers $E$, let $f_E\in\bb{Z}[t]$ be the monic polynomial whose multiset of roots is equal to $E$, so that $f_E(t)=\prod_{j\in E}(t-j)$. Note that $f_\varnothing=1$ and $f_E=f_{\widetilde E}\iff E=\widetilde E$; furthermore, $f_{E-n}(t)=f_E(t+n)$ for all $n\in\bb{Z}$. So suppose that $S$ and $T$ are finite multisets of integers satisfying \cref{eq:set_equation}. This is equivalent to the following equation in $\bb{Z}[t]$:
\begin{align*}
(t-j)f_S(t+n)f_T(t)=(t-k)f_S(t)f_T(t+n).
\end{align*}
Reducing the coefficients modulo $n$ we get $(t-j)f_S(t)f_T(t)\equiv(t-k)f_S(t)f_T(t)$, which gives $(j-k)f_S(t)f_T(t)\equiv 0$ in $\bb{Z}_n[t]$. As $f_S(t)f_T(t)$ is monic, it follows that $n|j-k$.
\end{proof}

The following is the main result of this section and one of the main results of the paper as it constructs simple $R$-free $A$-modules of arbitrary finite rank under some mild restrictions. Note that the very existence of a simple $R$-free $A$-module implies that $\orb([z])$ is infinite for all $z\in\Irr(R)$, by \cref{P:modules-of-finite-len}.

\begin{thm}\label{T:main:rank:n}
\label{P:simple_rank_n}
Let $R$ be a PID and assume that $a$ is not a unit and that all $\mathcal G$-orbits in $\Irr(R)$ are infinite. Let $a_0$ be any irreducible factor of $a$ that is minimal among the factors of $a$, in the sense that $\sigma^{-k}(a_0)$ is not a factor of $a$ for any $k>0$. 
For arbitrary $n\geq 1$, define an action of $A$ on $V_n(a_0):=R^n$, where $R$ acts by left multiplication and
\begin{align}
x \cdot r e_i &= \sigma^{-1}(r) e_{i+1} \; \text{for $i<n$, and }\; x \cdot r e_n = a_0\sigma^{-1}(r)e_1 \label{eq:x_act_rank_n}, \\
y \cdot r e_i &= \sigma(ar)e_{i-1} \; \text{for $i>1$, and }\; y \cdot r e_1 = \sigma(\tfrac{ra}{a_0})e_n. \label{eq:y_act_rank_n} 
\end{align}
Under this action, $V_n(a_0) \in \mathscr{C}_n$ is a simple $A$-module which is free of rank $n$ over $R$.
\end{thm}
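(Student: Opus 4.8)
For the first assertion, the plan is to realize $V_n(a_0)$ as one of the modules $V_\bigp$ from the classification above. Setting $r=1$ in \cref{eq:x_act_rank_n,eq:y_act_rank_n} exhibits $V_n(a_0)=V_\bigp$, where $\bigp\in\mathrm{Mat}_n(R)$ is the ``cyclic companion'' matrix whose columns are $e_2,e_3,\dots,e_n,a_0e_1$. Here $\det\bigp=\pm a_0\neq 0$, and the nonzero entries of $a\bigp^{-1}$ are $a$ (on the superdiagonal) and $a/a_0$ (in the lower-left corner), both lying in $R$ since $a_0\mid a$; hence $a\bigp^{-1}\in\mathrm{Mat}_n(R)$ and $V_n(a_0)=V_\bigp\in\mathscr{C}_n$ by the discussion preceding \cref{L:rankn:isos} (equivalently, by \cref{L:rankn:dn-div-a}, the invariant factors of $\bigp$ being $1,\dots,1,a_0$). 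A direct check confirms that the actions \cref{E:bigaction} associated with this $\bigp$ coincide with \cref{eq:x_act_rank_n,eq:y_act_rank_n}.

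For simplicity, the idea is to restrict $V_n(a_0)$ to the sub-GWA of $A$ generated by $R$, $x^n$ and $y^n$, over which $V_n(a_0)$ will turn out to be a \emph{multiplicity-free semisimple} module. Set $\tilde a:=\prod_{j=0}^{n-1}\sigma^{-j}(a)=x^ny^n$ and $B:=R(\sigma^n,\tilde a)$. The relations $x^nr=\sigma^{-n}(r)x^n$, $x^ny^n=\tilde a$, $y^nx^n=\sigma^n(\tilde a)$ identify $B$ with the subalgebra $\ip{R,x^n,y^n}\subseteq A$, much as in the proof of \cref{L:cent}\cref{L:cent:c}. Since every $\ip{\sigma}$-orbit in $\Irr(R)$ is infinite, so is every $\ip{\sigma^n}$-orbit; and $B$ is a GWA over the PID $R$, so all results of \cref{S:GWA,S:PID} apply to $B$. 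Iterating \cref{eq:x_act_rank_n} gives $x^n\cdot re_i=\sigma^{1-i}(a_0)\,\sigma^{-n}(r)\,e_i$, so $x^n$ preserves each $R$-summand $Re_i$; as $x^n$ is injective (because $\det\bigp\neq 0$) and $x^ny^n=\tilde a$ acts by the scalar $\tilde a$, it follows that $y^n$ also preserves each $Re_i$. Thus each $Re_i$ is a $B$-submodule, free of rank $1$ over $R$; since $\sigma^{1-i}(a_0)\mid\tilde a$ (as $a_0\mid a$ and $0\le i-1\le n-1$), \cref{P:class-rk-1-general} applied to $B$ identifies $Re_i$ (with generator $e_i$) with the $B$-module $V^B_{\sigma^{1-i}(a_0)}$ built as in \cref{D:Vp} for $B$. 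Hence, as $B$-modules, $V_n(a_0)=\bigoplus_{i=1}^n Re_i\simeq\bigoplus_{i=1}^n V^B_{\sigma^{1-i}(a_0)}$.

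The heart of the argument is to show that each $V^B_{\sigma^{1-i}(a_0)}$ is simple over $B$, via \cref{cor:simplicity} for $B$. In the notation of \cref{D:Vp} for $B$ its associated element is $\q=\sigma^n\!\big(\tilde a/\sigma^{1-i}(a_0)\big)$, and since $\tilde a/\sigma^{1-i}(a_0)=\big(\prod_{0\le j\le n-1,\ j\neq i-1}\sigma^{-j}(a)\big)\cdot\sigma^{1-i}(a/a_0)$, every irreducible factor of $\q$ has the shape $\sigma^t(b)$ with $1\le t\le n$ and $b$ an irreducible factor of $a$; on the other hand the only irreducible factor of $\sigma^{1-i}(a_0)$ is $\sigma^{1-i}(a_0)$, a backward shift of $a_0$ by $i-1\ge 0$ steps. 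If $\sigma^{nm}\!\big(\sigma^t(b)\big)\sim\sigma^{1-i}(a_0)$ for some $m\in\bb Z$, then $b\sim\sigma^{1-i-nm-t}(a_0)$ divides $a$, so the minimality of $a_0$ forces $1-i-nm-t\ge 0$, i.e.\ $nm\le 1-i-t\le -1$ and $m<0$. This is precisely condition~(2) of \cref{cor:simplicity} (condition~(1) being the infiniteness of the orbits), so $V^B_{\sigma^{1-i}(a_0)}$ is simple. Moreover, by \cref{C:rank1:iso} and the infiniteness of $\orb(a_0)$, the $B$-modules $V^B_{\sigma^{1-i}(a_0)}$, $1\le i\le n$, are pairwise non-isomorphic.

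Therefore $V_n(a_0)$ is a semisimple $B$-module whose simple summands are pairwise non-isomorphic, so its $B$-submodules are exactly the partial sums $\bigoplus_{i\in S}Re_i$, $S\subseteq\{1,\dots,n\}$. To finish, let $W$ be a nonzero $A$-submodule of $V_n(a_0)$; viewing it as a $B$-submodule, $W=\bigoplus_{i\in S}Re_i$ for some $S\neq\varnothing$. Using the $A$-action: for $i\in S$ with $i<n$ we get $e_{i+1}=x\cdot e_i\in W$, hence $i+1\in S$; and if $n\in S$ then $a_0e_1=x\cdot e_n\in W$ with $a_0\neq 0$ forces $1\in S$. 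Thus $S$ is a nonempty subset of $\{1,\dots,n\}$ closed under the cyclic successor, so $S=\{1,\dots,n\}$ and $W=V_n(a_0)$. The main obstacle is the orbit/multiplicity computation verifying condition~(2) of \cref{cor:simplicity} in the third paragraph; the rest is routine bookkeeping with the $\bb Z$-grading and with the multisets of irreducible factors of $a$, $\tilde a$ and $\q$.
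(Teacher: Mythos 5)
Your proof is correct, but the core of your simplicity argument takes a genuinely different route from the paper's. The paper starts from an arbitrary nonzero element $w$ of a submodule and inductively produces an element of the form $re_i$ by exploiting the diagonal action of $x^n$ and subtracting suitable multiples; the heart of that step is showing the resulting element is nonzero, which is exactly where the integer-multiset \cref{lemma:set_equation} enters. It then only needs simplicity of the single summand $Re_1\simeq V_{a_0}$ over the subalgebra $A'=\ip{R,x^n,y^n}$ (checked via \cref{cor:simplicity}), and finishes with the cyclic action of $x$. You instead prove that the restriction of $V_n(a_0)$ to $B=A'$ is semisimple and multiplicity free: each $Re_i\simeq V^B_{\sigma^{1-i}(a_0)}$ is $B$-simple (your uniform verification of condition~(2) of \cref{cor:simplicity}, using $t\geq 1$, $i\geq 1$ and the minimality of $a_0$, is correct, as is the deduction that $\sigma^n$-orbits are infinite), and the summands are pairwise non-isomorphic by \cref{C:rank1:iso} together with the infiniteness of $\orb(a_0)$; hence every $B$-submodule is a partial sum $\bigoplus_{i\in S}Re_i$, and the cyclic $x$-action forces $S=\{1,\dots,n\}$. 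This bypasses \cref{lemma:set_equation} altogether --- in your approach the role of that lemma (ruling out ``diagonal locking'' of coordinates) is played by the non-isomorphy of the simple $B$-summands --- and it yields strictly more information, namely the full lattice of $B$-submodules of $V_n(a_0)$. The trade-off is that you must verify simplicity of all $n$ summands over $B$ rather than just one, whereas the paper's coordinate-killing argument is less structural but needs only the single case $\p=a_0$. Your preliminary identifications ($V_n(a_0)=V_\bigp$ for the companion matrix, membership in $\mathscr{C}_n$ via \cref{L:rankn:dn-div-a}, the identification $Re_i\simeq V^B_{\sigma^{1-i}(a_0)}$ via \cref{P:class-rk-1-general}, and the argument that $y^n$ preserves each $Re_i$) all check out.
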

\begin{proof}
We note that $V_n(a_0) = V_\bigp$ where the matrices $\bigp$ and $\bigq=\sigma(a\bigp^{-1})$ in $\mathrm{Mat}_n(R)$ are given by
\[
\bigp = \begin{pmatrix}
0 & 0 & 0 & \cdots & 0 & a_0 \\
1 & 0 & 0 & \cdots & 0 & 0 \\
0 & 1 & 0 & \cdots & 0 & 0 \\
0 & 0 & 1 & \ddots & 0 & 0 \\
\vdots & \vdots & \ddots & \ddots & \vdots & \vdots \\
0 & 0 & 0 & \cdots & 1 & 0
\end{pmatrix} \qquad \bigq=\begin{pmatrix}
0 & \sigma(a) & 0 & 0 & \cdots & 0 \\
0 & 0 & \sigma(a) & 0 & \cdots & 0 \\
0 & 0 & 0 & \sigma(a) & \ddots & 0 \\
\vdots & \vdots & \vdots & \ddots & \ddots & \vdots \\
0 & 0 & 0 & \cdots & 0 & \sigma(a) \\
\sigma(\tfrac{a}{a_0}) & 0 & 0 & \cdots & 0 & 0
\end{pmatrix}.
\]
Note that $\bigp=\mathrm{Comp}(t^n-a_0)$, the companion matrix of the polynomial $p(t)=t^n-a_0 \in R[t]$. 


By \cref{eq:x_act_rank_n} we see that $x^n$ acts diagonally on $V_\bigp$: 
\[x^n \cdot v = \begin{pmatrix}
a_0&&&\\
&\sigma^{-1}(a_0)&&\\
&&\ddots&\\
&&&\sigma^{-(n-1)}(a_0)\\
\end{pmatrix}\sigma^{-n}(v) = \begin{pmatrix}
a_0\sigma^{-n}(v_1)\\
\sigma^{-1}(a_0)\sigma^{-n}(v_2)\\
\vdots\\
\sigma^{-(n-1)}(a_0)\sigma^{-n}(v_n)\\
\end{pmatrix}.\]

Now let $w=(w_1,\ldots, w_n) \in V_\bigp$ be nonzero and let $W=A\cdot w$ be the submodule of $V_\bigp$ generated by $w$. We shall prove that $W=R^n$. First we show that $W$ contains some element of form $re_i$ by proving that if $w$ is not already of the this form, then $W$ contains a nonzero element with more zero coordinates than $w$. Fix an index $k$ for which $w_k \neq 0$ and note that $W$ contains the element
\[w' = w_k\cdot (x^n  \cdot w) - \sigma^{-n}(w_k)\sigma^{-(k-1)}(a_0)\cdot w.\]
The $j$'th coordinate of $w'$ is 
\[w_j'=\pi_j(w') = w_k \sigma^{-(j-1)}(a_0)\sigma^{-n}(w_j) - \sigma^{-n}(w_k)\sigma^{-(k-1)}(a_0)w_j.\]
So $w'_k=0$ and if $w_j=0$ then $w'_j=0$, showing that $w'$ has more zero coordinates than $w$. It remains to show that $w' \neq 0$. Assume that $j\neq k$ is an index for which $w_j\neq 0$ but $w'_j=0$. Then
\begin{equation}
\label{eq:make_zeros}
w_k \sigma^{-(j-1)}(a_0)\sigma^{-n}(w_j) = \sigma^{-n}(w_k)\sigma^{-(k-1)}(a_0)w_j.
\end{equation}
Factoring each side of \cref{eq:make_zeros} into irreducibles, we note that equality holds if and only if it holds in each $\mathcal G$-orbit of irreducibles. In particular, equation~\cref{eq:make_zeros} must hold for the factors in $\orb(a_0)$, so without loss of generality we may assume that all factors of $w_k$ and $w_j$ are in the same orbit as $a_0$. We can then identify (up to units) $w_k$ and $w_j$ with integer multisets $S$ and $T$ such that
\[w_k \sim \prod_{s \in S} \sigma^{s}(a_0) \text{ and } w_j \sim \prod_{t \in T} \sigma^{t}(a_0).\]
Substituting this into \cref{eq:make_zeros} we get
\[ \big(\prod_{s \in S} \sigma^{s}(a_0)\big) \sigma^{-(j-1)}(a_0)\sigma^{-n}\big(\prod_{t \in T}\sigma^{t}(a_0)\big) = \sigma^{-n}\big(\prod_{s \in S} \sigma^{s}(a_0)\big)\sigma^{-(k-1)}(a_0)\big(\prod_{t \in T}\sigma^{t}(a_0)\big),\]
or equivalently
\[ \big(\prod_{s \in S} \sigma^{s}(a_0)\big) \sigma^{-(j-1)}(a_0)\big(\prod_{t \in T}\sigma^{t-n}(a_0)\big) = \big(\prod_{s \in S} \sigma^{s-n}(a_0) \big)\sigma^{-(k-1)}(a_0)\big(\prod_{t \in T}\sigma^{t}(a_0)\big).\]
Since the $\mathcal G$-orbit of $a_0$ is infinite we can compare the exponents of the various $\sigma^{i}(a_0)$. This yields the finite integer multiset-equation
\[S \cup \{1-j\} \cup (T-n) = (S-n) \cup \{1-k\} \cup T. \]
But by \cref{lemma:set_equation}, this equation has no solution since $0 < |j-k| <n$.
We have shown that we can construct $w'\in W$ with exactly one more zero coordinate than $w$, so repeating this process we see that the $A$-submodule $W \subseteq V_\bigp$ contains a nonzero element of form $re_i$. Here we may assume that $i=1$, otherwise replace $re_i$ by $x^{n+1-i} \cdot re_i$. 

Now let $A'$ be the subalgebra of $R(\sigma,a)$ generated by $R$, $x^n$ and $y^n$. Then \[A' \simeq R(\sigma^n,a\sigma^{-1}(a)\cdots \sigma^{-(n-1)}(a)) = R(\sigma',a')\] so $A'$ is also a generalized Weyl algebra and $Re_1 \subseteq V_\bigp$ is an $R$-free module of rank $1$ over $A'$. By \cref{eq:x_act_rank_n} we see that $Re_1 \simeq_{A'} V_{\p}$ with $\p=a_0$. 
But if $a=a_0 a_1 \cdots a_m$ is a factorization of $a$ into irreducibles, then
\[a'=\prod_{j=0}^m\prod_{i=0}^{n-1}\sigma^{-i}(a_j).\]
We show that $a_0$ is a minimal element (in the sense of the statement of the theorem) with respect to the $\sigma'$ orbits of all irreducible factors of $a'$. 
Assume for the sake of contradiction that $(\sigma')^{-k}(a_0) \sim \sigma^{-i}(a_j)$ for some $0 \leq i \leq n-1$, $0 \leq j \leq m$ and $k>0$.
This implies that $\sigma^{-(nk-i)}(a_0) \sim a_j$ with $nk-i \geq n-i > 0$, which contradicts the stated assumption of the minimality of $a_0$ in the $\mathcal G$-orbits of factors of $a$. 

Now applying the simplicity criterion of \cref{cor:simplicity} to the $A'$-module $V_\p=V_{a_0}$, we obtain that $V_\p\simeq Re_1$ is simple. Therefore $b(re_1)=e_1$ for some $b\in A' \subseteq A$. This shows that $e_1 \in W$, which implies that $x^{k} \cdot e_1 = e_{k+1} \in W$ for $0 \leq k \leq n-1$, and hence $W=R^n$ since $W$ is an $R$-module.

Thus we have shown that $\{0\}$ and $R^n$ are the only submodules of $V_\bigp$, so $V_\bigp$ is simple, as claimed.
\end{proof}

\begin{rmk}\label{R:rankn:several-minimals}
The last invariant factor of the matrix $\bigp$ in the proof of \cref{T:main:rank:n} is the divisor $a_0$ of $a$, hence distinct choices of $a_0$ (up to associates) produce non-isomorphic modules. Moreover, 
note that \cref{T:main:rank:n} and its proof still hold if we replace $a_0$ with a product of minimal (in the sense of the theorem) factors of $a$ from distinct $\mathcal G$-orbits.
\end{rmk}

We apply \cref{P:simple_rank_n} above to construct new families of simple $\fr{sl}_2$-modules of rank $n$ over $\F[h]$.
As in~\cref{SS:sl2_rank_1}, let
\[R=\F[h], \quad \sigma(h)=h-2, \quad a = -\frac{1}{4}(h+b)(h-b+2).\]
Then $R(\sigma,a) \simeq U(\fr{sl}_2) / \langle C - \chi\rangle$, with $\chi=\frac{1}{4}b(b-2)$.

If $\chara(\F)=0$ and $b \not\in \bb{Z}$, the $\mathcal G$-orbits of $\Irr(a)$ are non-intersecting so, for arbitrary $n>0$, \cref{P:simple_rank_n} says that the module $V_n(h+b)$ is a simple $\fr{sl}_2$-module of rank $n$ with central character $\chi$. 
We can state this more explicitly as follows.

\subsection{Application to $\fr{sl}_2$}
Consider the Lie algebra $\fr{sl}_2$ with its standard basis $\{e,f,h\}$, over a field $\F$ of characteristic $0$.
For $b\in \F$ and $n\in \bb{Z}_{>0}$, let $V_n^{(b)}=V_n(h+b)=\F[t]^n$ and define an action of $\fr{sl}_2$ on $V_n^{(b)}$ by

\[
h \cdot \begin{pmatrix}f_1(t)\\f_2(t)\\ \vdots \\ \vdots \\ \vdots \\ f_n(t)\end{pmatrix}=\begin{pmatrix}tf_1(t)\\tf_2(t)\\ \vdots \\ \vdots \\ \vdots \\ tf_n(t)\end{pmatrix}, 
\qquad 
e \cdot \begin{pmatrix}f_1(t)\\f_2(t)\\ \vdots \\ \vdots \\ \vdots \\ f_n(t)\end{pmatrix} =
-\frac{1}{4} \begin{pmatrix}
0 & \theta & 0 & 0 & \cdots & 0 \\
0 & 0 & \theta & 0 & \cdots & 0 \\
0 & 0 & 0 & \theta & \ddots & 0 \\
\vdots & \vdots & \vdots & \ddots & \ddots & \vdots \\
0 & 0 & 0 & \cdots & 0 & \theta \\
t-b & 0 & 0 & \cdots & 0 & 0
\end{pmatrix}
\begin{pmatrix}f_1(t-2)\\f_2(t-2)\\ \vdots \\ \vdots \\ \vdots \\ f_n(t-2)\end{pmatrix}
\]
\[\text{and} \qquad
f \cdot \begin{pmatrix}f_1(t)\\f_2(t)\\ \vdots \\ \vdots \\ \vdots \\ f_n(t)\end{pmatrix} = \begin{pmatrix}
0 & 0 & 0 & \cdots & 0 & t+b \\
1 & 0 & 0 & \cdots & 0 & 0 \\
0 & 1 & 0 & \cdots & 0 & 0 \\
0 & 0 & 1 & \ddots & 0 & 0 \\
\vdots & \vdots & \ddots & \ddots & \vdots & \vdots \\
0 & 0 & 0 & \cdots & 1 & 0
\end{pmatrix}
\begin{pmatrix}f_1(t+2)\\f_2(t+2)\\ \vdots \\ \vdots \\ \vdots \\ f_n(t+2)\end{pmatrix},
\]
where $\theta:=(t+b-2)(t-b)=t^2-2t-b(b-2)$.

Under this action, except if $b\in\bb{Z}_{\leq 0}$, the irreducible factor $h+b$ of $a$ is minimal, whence \cref{P:simple_rank_n} guarantees that $V_n^{(b)}$ is a simple $\fr{sl}_2$-module with \[\rank \; \mathrm{Res}_{\F[h]}^{U(\fr{sl}_2)} V_n^{(b)} = n.\] In other words,  $V_n^{(b)}$ is simple and free of rank $n$ when restricted to the subalgebra $\F[h] = U(\fr{h}) \subseteq U(\fr{sl}_2)$.
The central character of $V_n^{(b)}$ is given by the action of the Casimir operator as the scalar $\chi=\frac{1}{4}b(b-2)$. 
In case $b\in\bb{Z}_{\leq 0}$, just replace $b$ with $2-b$ in the discussion above. Moreover, the modules $V_n^{(b)}$ defined herein are non-isomorphic to the modules constructed in~\cite{MP17}, as $V_n^{(b)}$ corresponds to the Smith type $\mathrm{SNF}(\bigp)=(1, \ldots, 1, t+b) \neq (1, \ldots, 1)$, as seen from the action of $f$ above. 

Finally, notice that \cref{R:rankn:several-minimals} allows for a further generalization in case $b \not\in \bb{Z}$. In this situation, the two irreducible factors of $a$, namely $h+b$ and $h-b+2$, are in distinct $\mathcal G$-orbits so they are both minimal. Then, for any $n>0$, $V_n((h+b)(h-b+2))$ is a simple $\fr{sl}_2$-module which is free of rank $n$ over $\F[h]$ and has Smith type $(1, \ldots, 1, (t+b)(t-b+2))$.

\section*{Acknowledgements}
The first named author was partially supported by CMUP -- Centro de Matem\'atica da Universidade do Porto, member of LASI, which is financed by national funds through FCT -- Funda\c c\~ao para a Ci\^encia e a Tecnologia, I.P., under the project with reference UID/00144/2025, doi: \url{https://doi.org/10.54499/UID/00144/2025}.

This work was initiated during a visit of the first named author to Link\"oping University in 2024 and he gratefully acknowledges the warm hospitality extended during this stay as well as the organizers of the VIII International Workshop on Non-Associative Algebras in Link\"oping, which motivated the visit.

The latter part of this work was carried out during a visit of the second named author to the first named author at the University of Porto in November 2025, and he gratefully acknowledges the warm hospitality extended during this stay. This visit was supported by a stipend from SVeFUM - \emph{Stiftelsen för Vetenskaplig Forskning och Utbildning i Matematik}, whose financial assistance is gratefully acknowledged.

\newcommand{\germ}{\mathfrak}
\bibliographystyle{plain}
\def\cprime{$'$}

\end{document}